\documentclass[10pt,a4paper,reqno]{amsart}
\usepackage{amssymb,amsmath,amsthm}
\usepackage[pdftex]{graphicx}
\usepackage{xypic}
\usepackage{multicol}

\numberwithin{equation}{section}
\newtheorem{thm}{Theorem}[section]
\newtheorem{cor}[thm]{Corollary}
\newtheorem{lem}[thm]{Lemma}
\newtheorem{prop}[thm]{Proposition}

\theoremstyle{definition}
\newtheorem{dfn}[thm]{Definition}

\newcommand{\Z}{\mathbb{Z}}

\newcommand{\Zp}{\mathbb{Z}/p\mathbb{Z}}
\newcommand{\p}{\mathcal{P}^{1}_{\ast}}
\newcommand{\nil}{\textrm{nil}}
\newcommand{\ord}{\textrm{ord}}
\newcommand{\degp}{\underline{p}}
\newcommand{\degpp}{\underline{p}^2}
\newcommand{\degppp}{\underline{p}^3}
\newcommand{\degpm}{\underline{p}^m}

\author{Andrew Russhard}

\title{Power Maps On Quasi-$p$-Regular $SU(n)$}
\subjclass[2010]{Primary 55P35, Secondary 55T9}
\keywords{power maps, lie groups, quasi-$p$-regular}
\begin{document}	

\begin{abstract}
	In the paper we will introduce show that the $p^3$ power map on $SU(p+t-1)$ is an H-map for $2\leq t\leq p-1$. To do this we will consider a fibration whose base space is $SU(p+t-1)$ with the property that there is a section into the total space. We will then use decomposition methods to identify the fibre and the map from it to the total space. This information will be used to deduce information about $SU(p+t-1)$. In doing this we draw together recent work of Kishimoto and Theriault with more classical work of Cohen and Neisendorfer, and make use of the classical theorems of Hilton and Milnor, James and Barrett. 
\end{abstract}

\maketitle

\section{Introduction}

Lie groups play an integral role in many areas of mathematics.
Despite this there are still open problems concerning their multiplicative properties.
In the 1970's, McGibbon, Arkowitz and others looked at the following problem.
\newline
\newline
\indent If $X$ is a connected, homotopy associative H-space which is homotopy equivalent to a finite CW-complex, when is the map $x\mapsto x^{k}$ an H-map?
\newline
\newline
Collectively, they gave the following necessary and sufficient condition for this to be the case \cite{MR0230306,MR587096}.
\newline
\newline
\indent Let $X$ be a connected, homotopy associative, H-space (or localization at an odd prime thereof) then there exists a number $N$ (dependent on $X$) such that the map $x\mapsto x^{k}$ is an H-map iff $k(k-1)\equiv 0$ mod $N$.
\newline
\newline
This condition however depends on a number $N$ which is itself dependent upon the space $X$.
The value of $N$ was calculated for various rank 2 loop spaces as well as for $S^1$, $S^3$ with multiplication induced by the quaternions and $S^7$ with the multiplication induced by the Cayley numbers.
Whilst in principle their work offers a complete solution, the value of $N$ is known in relatively few cases.
What we shall show in this paper is that for the $p$-localization of certain Lie groups $SU(n)$, the value of $N$ is a multiple of $p^2(p^2-1)$ where $p$ is a prime $>5$.
To put this another way, we will show that the $p^2$ power map on the $p$-localization of $SU(n)$ for $p+1\leq n\leq 2p-2$ is an H-map.

Theriault \cite{T} and Kishimoto \cite{MR2506127} have recently proved results of a similar vein.
Theriault \cite{T} showed a similar result for $p$-regular Lie groups, that is when the Lie groups are homotopy equivalent to a product of spheres.
We consider the more technically demanding case when $SU(n)$ is quasi-$p$-regular, that is, that it is homotopy equivalent to a product of spheres and sphere bundles over spheres.
To do this we will use the recent work of Kishimoto in which he has calculated the orders of certain Samelson products in quasi-$p$-regular $SU(n)$.

We will draw these more recent results together with classical results of James, Cohen and Neisendorfer to achieve our results.
In the second section we will state the main results of this paper along with some preliminaries.
The third section will be given over to making explicit a fibration which plays a pivotal role in the proofs of the main results.
In the final two sections we will prove the main results.  

\section{Main Results}

Let $p$ be an odd prime. A Lie group $X$ is said to be quasi-$p$-regular if it is $p$-locally homotopy equivalent to a product of spheres and sphere bundles over spheres.
Oka \cite{MR0258035} showed that for $2\leq t\leq p$ we have the following $p$-local decomposition
\begin{equation*}
	SU(p+t-1)\simeq B_2\times B_3\times\ldots\times B_t\times S^{2t+1}\times S^{2t+3}\times\ldots\times S^{2p-1}
\end{equation*}
where $B_k$ is a 3-cell complex which sits in a fibration
\begin{equation*}
	S^{2k-1}\rightarrow B_k \rightarrow S^{2(k+p)-3},
\end{equation*}
and $H_{\ast}(B_k,\Zp)\cong\Lambda(x_{2k-1},x_{2(k+p)-3})$ where $|x_j|=j$ and the $x_i$ are linked via a Steenrod operation $\p(x_{2(k+p)-3})=x_{2k-1}$.

From this point on, all spaces will be localized at a prime $p>5$, and homology will have $\Zp$ coefficients.
Let $A_k$ be the $(4k+2p-5)$-skeleton of $B_k$ and let
\begin{equation*}
	A=A_2\vee A_3\vee\ldots \vee A_t\vee S^{2t+1}\vee S^{2t+3}\vee\ldots\vee S^{2p-1}.
\end{equation*}
As the inclusion $A_k\hookrightarrow B_k$ induces the inclusion of the generating set in homology for each $2\leq n \leq p$, there is an obvious inclusion $\overline{i}:A\hookrightarrow SU(p+t-1)$.
Taking the adjoint of this map gives a map $i:\Sigma A\hookrightarrow BSU(p+t-1)$.
There is then a fibration
\begin{equation*}
	F\xrightarrow{\nu} \Sigma A \xrightarrow{i} BSU(p+t-1)
\end{equation*}
for each $2\leq t<p$, which defines the space $F$ and the map $\nu$.
Looping this fibration then gives us the fibration
\begin{equation*}
	\Omega F\xrightarrow{\Omega\nu} \Omega \Sigma A \xrightarrow{\Omega i} SU(p+t-1).
\end{equation*}

We will decompose $\Omega F$ and determine the map $\Omega \nu$ as in the following theorem.
Let $X^{(k)}$ be the $k$-fold self smash of $X$.
For ease of notation, if we write $X_1^{(k_1)}\wedge\ldots\wedge X_n^{(k_n)}$, any term with $k_t=0$ is omitted from the smash product.
For example $X_1^{(k_1)}\wedge X_2^{(0)}\wedge X_3^{(k_3)}$ means $X_1^{(k_1)}\wedge X_3^{(k_3)}$.

Observe that each $A_k$ has one or two cells. 
In such cases Cohen and Neisendorfer \cite{MR764588} showed that there exists a fibration
\begin{equation*}
	\Omega R_k\xrightarrow{\Omega f_k} \Omega\Sigma A_k \rightarrow B_k,
\end{equation*}
and a homotopy decomposition $\Omega\Sigma A_k\simeq B_k\times \Omega R_k$.
If $A_k=S^{2k-1}$ then we have that $R_k=S^{4k-3}$, and if $A_k$ has two cells then $R_k$ is a $5$-cell complex.
The map $f_k$ was shown to factor through Whitehead products. 
Let $\overline{f}_k$ be the composite
\begin{equation*}
	\overline{f}_k: R_k\xrightarrow{f_k} \Sigma A_k\hookrightarrow \Sigma A.
\end{equation*}

\begin{thm}\label{a}
	The space $\Omega F$ in the fibration
	\begin{equation*}
		\Omega F\xrightarrow{\Omega \nu} \Omega \Sigma A \xrightarrow{\Omega i} SU(p+t-1)
	\end{equation*}
	can be decomposed as 
	\begin{equation*}
		\Omega F\simeq \prod_{j\in\mathcal{J}}\Omega\Sigma\left( A_2^{(j_2)}\wedge A_3^{(j_3)} \wedge\ldots\wedge A_t^{(j_t)}\wedge S^{2t+1^{(j_{t+1})}}\wedge\ldots\wedge S^{2p-1^{(j_{p})}} \right)\times \prod_{k=2}^{p}\Omega R_k
	\end{equation*}
	for an appropriate index set $\mathcal{J}$, and under this equivalence the map $\Omega\nu$ restricted to $\Omega R_k$ is $\Omega \overline{f}_k$ and $\Omega \nu$ restricted to each other factor of $F$ is 
	\begin{itemize}
 		\item  a looped Whitehead product if the factor is a smash of four or more spaces;
  		\item  a looped Whitehead product if the factor is a smash of three spaces and $2\leq t <\frac{p+1}{2}$;
  		\item  an ``amended'' looped Whitehead product if the factor is a smash of three spaces and $\frac{p+1}{2}\leq t\leq p$;
  		\item  an ``amended'' looped Whitehead product if the factor is a smash of two spaces.
	\end{itemize}
	By the term ``amended'' looped Whitehead product we mean a map of the form $\Omega(\omega-f)$ where $\omega$ is a Whitehead product and $f$ is a map depending on the stable class $\alpha_1$ which we will make precise in the proof. 
\end{thm}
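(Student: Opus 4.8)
The plan is to build the decomposition of $\Omega F$ out of two independent descriptions of $\Omega\Sigma A$ and to reconcile them by means of a section of $\Omega i$. First I would apply the Hilton--Milnor theorem to the wedge $A=A_2\vee\cdots\vee A_p$ (writing $A_k=S^{2k-1}$ for $t<k\le p$, so that the sphere summands are absorbed into the notation), obtaining a homotopy equivalence
\begin{equation*}
\Omega\Sigma A\simeq \prod_{k=2}^{p}\Omega\Sigma A_k\ \times\ \prod_{j\in\mathcal{J}}\Omega\Sigma\bigl(A_2^{(j_2)}\wedge\cdots\wedge A_p^{(j_p)}\bigr),
\end{equation*}
where the first product collects the weight-one basic products and $\mathcal{J}$ indexes the basic products of weight at least two, with the multiplicities of a Hall basis of the free Lie algebra on the summands. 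I would then split each weight-one factor by the Cohen--Neisendorfer equivalence $\Omega\Sigma A_k\simeq B_k\times\Omega R_k$, giving
\begin{equation*}
\Omega\Sigma A\simeq \Bigl(\prod_{k=2}^{p}B_k\Bigr)\ \times\ \Bigl(\prod_{k=2}^{p}\Omega R_k\ \times\ \prod_{j\in\mathcal{J}}\Omega\Sigma\bigl(A_2^{(j_2)}\wedge\cdots\wedge A_p^{(j_p)}\bigr)\Bigr),
\end{equation*}
and recognise the first bracket as $SU(p+t-1)$ via Oka's decomposition.

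Next I would produce a section $s\colon SU(p+t-1)\to\Omega\Sigma A$ of $\Omega i$. For each $k$ the composite $B_k\hookrightarrow\Omega\Sigma A_k\to\Omega\Sigma A$, given by the Cohen--Neisendorfer inclusion followed by the loop of the wedge inclusion, defines a map $s_k$, and multiplying the $s_k$ together with the loop multiplication yields $s$. Since $\overline{i}$ was arranged to induce the inclusion of the generating set in homology, $\Omega i\circ s$ induces an isomorphism on homology, hence is a self-equivalence of $SU(p+t-1)$; precomposing $s$ with its inverse makes it an honest section. Because $\Omega i$ is a loop map, and therefore an H-map, the composite
\begin{equation*}
\Omega F\times SU(p+t-1)\xrightarrow{\ \Omega\nu\times s\ }\Omega\Sigma A\times\Omega\Sigma A\xrightarrow{\ \mu\ }\Omega\Sigma A
\end{equation*}
covers the projection to $SU(p+t-1)$ and restricts to the identity on fibres, so it is a homotopy equivalence and $\Omega\Sigma A\simeq SU(p+t-1)\times\Omega F$. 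Comparing this with the displayed equivalence identifies $\Omega F$ with the second bracket, which is the asserted decomposition.

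It then remains to read off $\Omega\nu$ factor by factor. On $\Omega R_k$ the fibre inclusion of the Cohen--Neisendorfer fibration $\Omega R_k\xrightarrow{\Omega f_k}\Omega\Sigma A_k\to B_k$, composed with the loop of $\Sigma A_k\hookrightarrow\Sigma A$, is by construction the inclusion of the $\Omega R_k$ factor, so $\Omega\nu$ restricted there is $\Omega\overline{f}_k$. On a factor $\Omega\Sigma Y$ with $Y=A_2^{(j_2)}\wedge\cdots\wedge A_p^{(j_p)}$ the Hilton--Milnor inclusion is $\Omega\omega$, where $\omega\colon\Sigma Y\to\Sigma A$ is the iterated Whitehead product of the summand inclusions. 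Such a factor lands in $\Omega F=\mathrm{hofib}(\Omega i)$ exactly when $\Omega i\circ\Omega\omega=\Omega(i\circ\omega)$ is null, i.e.\ when the corresponding iterated Samelson product in $SU(p+t-1)$ vanishes; in that case $\Omega\nu$ restricted to the factor is $\Omega\omega$, a looped Whitehead product. When the Samelson product does not vanish I would replace $\omega$ by the \emph{amended} map $\omega-f$ with $f=s\circ\Omega(i\circ\omega)$ desuspended, for which $\Omega i\circ\Omega(\omega-f)$ is null by the identity $\Omega i\circ s\simeq\mathrm{id}$; since $f$ maps into the section image, altering $\omega$ by $f$ changes only the base component, so $\omega-f$ still includes the factor as an equivalence and realises $\Omega\nu$ as $\Omega(\omega-f)$.

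The hard part is the case analysis that separates the four conclusions and pins the amendment $f$ to $\alpha_1$. Because the cells of each $A_k$ are linked by $\p$, each $A_k$ is stably the cofibre of $\alpha_1$, and the only source of nonzero Samelson products among the summand inclusions is this single $\alpha_1$-linking; I would use Kishimoto's computation of the orders of Samelson products in quasi-$p$-regular $SU(n)$ to decide vanishing. For a smash of four or more factors the relevant product is forced to be zero, so $\omega$ is unamended; the same holds for a smash of three factors once $t<\tfrac{p+1}{2}$, the threshold being exactly the dimension at which the weight-three Samelson product can first be detected in $SU(p+t-1)$. For three factors with $\tfrac{p+1}{2}\le t\le p$, and for every smash of two factors, the Samelson product is nonzero of order $p$ and is carried by $\alpha_1$, so the amendment $f$ is the corresponding $\alpha_1$-multiple and the inclusion is the amended Whitehead product. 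The main obstacle throughout is the explicit identification of these nonzero Samelson products with concrete $\alpha_1$-maps and the verification of the exact dimensional boundaries delimiting the four cases; this is precisely where Kishimoto's order computations, together with the $\p$-structure of the $A_k$, are indispensable.
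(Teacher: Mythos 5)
Your route is the paper's: Hilton--Milnor applied to $\Sigma A$, the Cohen--Neisendorfer splittings $\Omega\Sigma A_k\simeq S_k\times\Omega R_k$, a section of $\Omega i$ yielding $\Omega\Sigma A\simeq SU(p+t-1)\times\Omega F$, and then a factor-by-factor lift through $\Omega\nu$ in which a correction term is subtracted from the Whitehead product exactly when the corresponding Samelson product in $SU(p+t-1)$ is nonzero; the threshold $t=\tfrac{p+1}{2}$ is indeed Kishimoto's computation $\nil(SU(p+t-1))=2$ versus $3$ rather than anything more delicate.

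The genuine gap is in how you conclude the decomposition of $\Omega F$. You assert that comparing $\Omega\Sigma A\simeq SU(p+t-1)\times\bigl(\prod_k\Omega R_k\times\Omega Q\bigr)$ with $\Omega\Sigma A\simeq SU(p+t-1)\times\Omega F$ ``identifies'' $\Omega F$ with the second bracket. Cancelling a common factor from two product decompositions is not valid in general, and even granting an abstract equivalence you must still show that the \emph{specific} map you have built --- the product of all your lifts, amended and unamended --- is an equivalence onto $\Omega F$; your remark that the amendment ``changes only the base component, so $\omega-f$ still includes the factor as an equivalence'' is precisely the point being assumed rather than proved. The paper closes this by homology: the corrections $a_{k,j}$ and $a_{k,j,l}$ compress through spheres $S^{2m-1}$ with $2\le m\le p$, which lie below the bottom cell of $A_k\wedge A_j$ (dimension $\ge 2p+2$), hence induce zero in homology (Lemma \ref{trivimage}); consequently the assembled map $\rho$ has the same image in homology as the unamended Hilton--Milnor/Cohen--Neisendorfer map (Lemmas \ref{2} and \ref{3}), is a monomorphism, and the lift $g$ is then a homology isomorphism between spaces of equal Euler--Poincar\'e series, so Whitehead's theorem finishes. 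With your choice $f=s\circ\Omega(i\circ\omega)$ you would need the analogous vanishing in homology (true, since Samelson products die in the commutative algebra $H_{\ast}(SU(p+t-1))$), but note that the paper deliberately takes the correction to be the compression through an explicit sphere supplied by Proposition \ref{sama} and Lemma \ref{samfacb}, because that explicit form is what is used later to control $\Omega\degppp\circ a_{k,j}$ in the proof of Theorem \ref{b}; the two choices are not interchangeable downstream.
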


Once this fibration has been constructed, we will use it to prove the main result of this paper.
\begin{thm}\label{b}
	Let $p>5$ and $2\leq t<p$. Then the $p^3$ power map on $SU(p+t-1)$ is an H-map. 
\end{thm}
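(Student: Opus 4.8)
The plan is to reduce Theorem~\ref{b} to the vanishing of the reduced H-deviation of the $p^{3}$-power map, and then to kill that deviation term-by-term using the decomposition of Theorem~\ref{a} together with Kishimoto's order computations. Write $p_{k}\colon SU(p+t-1)\to SU(p+t-1)$ for the $k$-th power map and let $\mu$ denote the loop multiplication. The map sending $(x,y)$ to $\mu\bigl(p_{k}\mu(x,y),\mu(p_{k}(y)^{-1},p_{k}(x)^{-1})\bigr)$ restricts to the constant map on each axis, so it is null on $SU(p+t-1)\vee SU(p+t-1)$ and factors through a reduced map $D_{k}\colon SU(p+t-1)\wedge SU(p+t-1)\to SU(p+t-1)$. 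Since $SU(p+t-1)$ is homotopy associative, $p_{k}$ is an H-map if and only if $D_{k}\simeq\ast$, so the whole problem becomes showing $D_{p^{3}}\simeq\ast$.

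The next step is to control the Samelson products of $SU(p+t-1)$. The map $\Omega i$ is a loop map, hence an H-map, and because $\overline{i}$ induces the inclusion of the generating set in homology it admits a section, so $SU(p+t-1)$ is a retract of $\Omega\Sigma A$. A Whitehead product on $\Sigma A$ maps under $i$ to a Samelson product of $SU(p+t-1)$, and it becomes trivial there precisely when it lifts through $\nu$, i.e. when it is detected by a factor of $F$. Theorem~\ref{a} describes exactly these factors and the maps $\Omega\nu$ carrying them, namely genuine looped Whitehead products except on the two-fold smashes and on the three-fold smashes with $\tfrac{p+1}{2}\le t\le p$, which are amended products $\Omega(\omega-f)$ with $f$ built from $\alpha_{1}$; this is precisely the input from which the orders of the Samelson products are obtained. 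I would then use the commutator calculus on a homotopy-associative H-space to express $D_{p^{3}}$ as a finite sum, indexed by these factors, of iterated Samelson products with explicit integer coefficients, the leading coefficient on the basic two-fold product being $\binom{p^{3}}{2}$ and the coefficient on an $m$-fold product being a $\binom{p^{3}}{m}$-type expression.

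It then remains to annihilate each summand. The contribution attached to an $m$-fold smash is (coefficient)$\cdot\langle\,\cdot\,\rangle$, where the Samelson product $\langle\,\cdot\,\rangle$ has finite $p$-primary order computed by Kishimoto, so the summand is null as soon as the $p$-adic valuation $v_{p}$ of its coefficient is at least the $p$-exponent of that order. For $2\le m\le p-1$ one has $v_{p}\bigl(\binom{p^{3}}{m}\bigr)=3$, since $v_{p}(m)=0$, and this dominates the orders that occur throughout the range $2\le t<p$, where $n=p+t-1\le 2p-2$; thus all the genuine looped Whitehead-product terms vanish. For the amended terms one must feed in the extra $p$-torsion supplied by $\alpha_{1}$, which has order exactly $p$ in the stable stem, and verify that the order of $\Omega(\omega-f)$ is still small enough to be cleared by the valuation of its coefficient. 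Assembling these vanishings gives $D_{p^{3}}\simeq\ast$, and hence Theorem~\ref{b}.

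I expect the main obstacle to be exactly the amended Whitehead-product factors and the factors whose smash-length is closest to $p$. The $\alpha_{1}$-correction can raise the order of the associated Samelson product, so the clean divisibility $p^{3}\mid\binom{p^{3}}{m}$ no longer leaves a comfortable margin; one must compute the order of $\Omega(\omega-f)$ exactly rather than merely bound it, and weigh the borderline valuation drop (for instance $v_{p}\bigl(\binom{p^{3}}{p}\bigr)=2$) against Kishimoto's precise orders to be certain that each coefficient genuinely annihilates its product. Carrying this bookkeeping out uniformly across the two subranges $2\le t<\tfrac{p+1}{2}$ and $\tfrac{p+1}{2}\le t\le p$ singled out in Theorem~\ref{a} is where the real work lies.
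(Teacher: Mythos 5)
Your route is genuinely different from the paper's: you attack the H-deviation $D_{p^{3}}\colon SU(p+t-1)\wedge SU(p+t-1)\to SU(p+t-1)$ directly via a Hall--Petrescu commutator expansion, whereas the paper never touches $SU\wedge SU$ at all. It instead proves (Lemma \ref{z}) that $(\Omega i)\circ(\Omega\degppp)\circ(\Omega\nu)$ is null by running over the factors of $\Omega F$, invokes Theriault's Lemma \ref{dia} to produce an H-map $g$ with $g\circ\Omega i\simeq(\Omega i)\circ\Omega\degppp$, uses Barratt's distributive law to show $(\Omega i)\circ(\Omega\degppp-p^{3})\simeq\ast$, and finally identifies $g\simeq p^{3}$ by precomposing with the section $r$. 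Your numerics are in the right ballpark: $v_{p}\bigl(\binom{p^{3}}{2}\bigr)=3$ against length-two Samelson products of order at most $p^{3}$, coefficients of valuation at least $3$ against length-three products of order at most $p^{2}$, and nilpotence class at most $3$ killing everything longer. These are exactly the same order bounds the paper feeds into Lemma \ref{z}.

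The genuine gap is the step where you claim $D_{p^{3}}$ is ``a finite sum, indexed by these factors, of iterated Samelson products.'' The commutator terms produced by collecting $(xy)^{p^{3}}y^{-p^{3}}x^{-p^{3}}$ are iterated commutators $c_{m-1}\colon SU(p+t-1)^{\wedge m}\to SU(p+t-1)$ of the \emph{identity map} with itself, whereas Propositions \ref{sama}, \ref{samb} and Lemma \ref{samb2} only bound the orders of $\langle i_{k},i_{j}\rangle\colon A_{k}\wedge A_{j}\to SU(p+t-1)$, i.e.\ the commutator restricted to the tiny subcomplexes $A_{k}$. Passing from one to the other is not bookkeeping: you must bound the order of $c$ in $[SU\wedge SU, SU(p+t-1)]$, which requires decomposing $\Sigma(SU\wedge SU)$ (say by retracting it off $\Omega\Sigma A\wedge\Omega\Sigma A$ via the section and applying the James splitting) and then expanding the restriction of $c$ to each wedge summand $A^{(i)}\wedge A^{(j)}$ --- a Samelson product of \emph{pointwise products} of the $i_{k}$ --- into iterated Samelson products using bilinearity modulo higher commutators. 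None of this appears in your proposal, and it is precisely the work that the paper's use of James's Theorem \ref{James} and Lemma \ref{dia} is designed to avoid: H-maps out of $\Omega\Sigma A$ into a homotopy associative H-space are determined by their restriction to $A$, so one never has to decompose a smash of copies of $SU(p+t-1)$. Relatedly, you have misread what Theorem \ref{a} decomposes: its factors index $\Omega F$, the fibre of $\Omega i$, not $SU\wedge SU$, and the ``amended'' products $\Omega(\omega-f)$ are corrections needed to \emph{lift} Whitehead products through $\Omega\nu$; they are not summands of $D_{p^{3}}$, so the difficulty you anticipate there is located in the wrong place. The actual delicate point in the paper is the opposite one: showing that for the amended factors the correcting map $a_{k,j}$, which compresses to a sphere, also satisfies $(\Omega\degppp)\circ a_{k,j}\simeq a_{k,j}\circ\degppp$ and has order at most $p^{3}$ after composing with $\Omega i$.
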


\section{Samelson Products in $SU(p+t-1)$}

Over the next two sections we will analyse the fibration
\begin{equation}\label{fib1}
	\Omega F \xrightarrow{\Omega\nu} \Omega\Sigma A \xrightarrow{\Omega i} SU(p+t-1)
\end{equation}
To begin, we identify the map $(\Omega i)_{\ast}$.

Recall that by the Bott-Samelson theorem there is an algebra isomorphism, $H_{\ast}(\Omega\Sigma A)\cong T(\tilde{H}_{\ast}(A))$, where $T(\tilde{H}_{\ast}(A))$ is the free tensor algebra generated by $\tilde{H}_{\ast}(A)$.
It is also known that there is an algebra isomorphism
\begin{equation*}
	H_{\ast}(SU(p+t-1))\cong \Lambda(\tilde{H}_{\ast}(A))
\end{equation*}
where $\Lambda(\tilde{H}_{\ast}(A))$ is the free exterior algebra generated by $\tilde{H}_{\ast}(A)$.
As $\overline{i}_{\ast}$ is the adjoint of $i$, we have that the composite
\begin{equation*}
    A\xrightarrow{E} \Omega\Sigma A \xrightarrow{\Omega i} SU(p+t-1)
\end{equation*}
is homotopic to $\overline{i}$.
Since $\overline{i}_{\ast}$ is the inclusion of the generating set, and $(\Omega i)_{\ast}$ is a multiplicative extension of $\overline{i}_{\ast}$, we obtain that $(\Omega i)_{\ast}$ is the abelianisation of the tensor algebra.

Cohen, Moore and Neisendorfer \cite{MR519355} offer a different way to view the fibration (\ref{fib1}) homologically.
Let $L$ be the free Lie algebra generated by $\tilde{H}_{\ast}(A)$.
Then there is the abelianisation map, $a:L\rightarrow L_{ab}$, where $L_{ab}$ is the free abelian Lie algebra generated by $\tilde{H}_{\ast}(A)$.
The kernel of this map is $[L,L]$, the free Lie algebra generated by the brackets in $L$.
Cohen, Moore and Neisendorfer showed that the following diagram commutes
\[ \xymatrix{ H_{\ast}(\Omega F) \ar[r]^-{(\Omega\nu)_{\ast}} \ar[d]^-{\cong} & H_{\ast}(\Omega\Sigma A) \ar[r]^-{(\Omega i)_{\ast}} \ar[d]^-{\cong} & H_{\ast}(SU(p+t-1)) \ar[d]^-{\cong} \\
U[L,L] \ar[r] & UL \ar[r]^-{Ua} & UL_{ab} }\]
where $U$ denotes the universal enveloping algebra operator and $a$ is the abelianisation.
This suggests we may use Samelson products to decompose $\Omega F$ and identify the map $\Omega \nu$.
To do this we will need information about certain Samelson products in $SU(p+t-1)$.

From here on we will write
\begin{equation*}
	A=A_2\vee A_3\vee\ldots \vee A_p = A_2\vee\ldots\vee A_t\vee S^{2t+1}\vee\ldots\vee S^{2p-1}.
\end{equation*}
In this way, $A_k$ will denote $S^{2k-1}$ if $k>t$.

Let $i_k$ be the composite
\begin{equation*}
	i_k:A_k\hookrightarrow A \xrightarrow{i} SU(p+t-1).
\end{equation*}
It is Samelson products $\langle i_k,i_j\rangle$ and $\langle i_k,\langle i_j,i_l\rangle\rangle$ we shall need information about.
Kishimoto \cite{MR2506127} has examined the length two Samelson products $\langle i_k,i_j\rangle$ in detail.
We record his results in the following proposition.
Let $\ord(f)$ denote the order of the map $f$ and $Y_{k,j}$ be the $(2(k+j+2p-2)-1)$-skeleton of $A_k\wedge A_j$.

\begin{prop}\label{sama}
	Let $t+1\leq k,j\leq p$.
	Then $\ord(\langle i_k,i_j\rangle)=p$ if $k+j\geq p+2$.
    Furthermore, if $(k,j)\neq (p,t)$ then we have that 
    \begin{itemize}
      \item $\ord(\langle i_k,i_j\rangle)=p$ if $k+j\geq p+2$, $2\leq k\leq p$ and $t+1\leq j\leq p$,
      \item $\ord(\langle i_k,i_j\rangle|_{Y_{k,j}})=p$ unless $(k,j)=(p,p)$ if $k+j\geq p+2$ and $2\leq k,j\leq t$,
      \item $\ord(\langle i_k,i_j\rangle)=1$ otherwise.
    \end{itemize}
    Furthermore, if $k+j\neq 2p$ then $\langle i_k,i_j\rangle$ can be compressed to a map
    \begin{equation*}
        A_k\wedge A_j \rightarrow S^{2(k+j-p)+1}\hookrightarrow SU(p+t-1).
    \end{equation*}
    If $t\neq p$ then $\langle i_p,i_p\rangle$ can be compressed to a map
    \begin{equation*}
        S^{2p-1}\wedge S^{2p-1} \rightarrow S^{3}\hookrightarrow SU(p+t-1).
    \end{equation*}
    \qed
\end{prop}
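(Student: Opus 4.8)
This proposition records results of Kishimoto \cite{MR2506127}, and the plan is to reconstruct the computation by reducing each Samelson product to a sphere-level map detected by the stable class $\alpha_1$. The guiding observation is that $H_{\ast}(SU(p+t-1))$ is an exterior algebra, so its Pontryagin ring is graded-commutative and every Samelson product $\langle i_k,i_j\rangle$ is null in mod-$p$ homology. The product therefore carries no primary information and is governed entirely by torsion; at an odd prime the first such torsion is $\alpha_1\in\pi^{s}_{2p-3}$ of order $p$, which is precisely the homotopy-theoretic shadow of the operation $\p$ linking the two cells of each $B_k$. This already forces $p$ to be the only order that can occur.

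First I would establish the compression statement by obstruction theory along the filtration of $SU(p+t-1)$ coming from the fibrations $SU(m-1)\to SU(m)\to S^{2m-1}$. Restricting $\langle i_k,i_j\rangle$ to the skeleta of $A_k\wedge A_j$ and peeling off the top coordinate spheres, one checks that the obstructions to compressing the product into the coordinate sphere $S^{2(k+j-p)+1}=S^{2(k+j-p+1)-1}$ of $SU(p+t-1)$ vanish whenever $p+1\le k+j$; in exactly this range $2(k+j-p+1)-1$ is a legitimate generating sphere. The exceptional value $k+j=2p$ is where the naive target $S^{2p+1}$ receives a null map and one must instead compress one filtration deeper into $S^{3}$, where the controlling invariant is the next member of the $\alpha$-family, living in the stem $4p-5=(4p-2)-3$.

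With the compression in place the upper bound $\ord\langle i_k,i_j\rangle\mid p$ is immediate: on the target sphere the product is carried by $\alpha_1$ (respectively by the stem-$(4p-5)$ class when $k=j=p$), and these classes are annihilated by $p$. The real content is the reverse inequality, namely that the $\alpha_1$-component is genuinely nonzero when $k+j\ge p+2$ and vanishes otherwise. For nonvanishing I would argue by detection, forming the cofibre of the compressed map and exhibiting a nontrivial secondary operation built on the Steenrod operation $\mathcal{P}^{1}$ dual to $\p$, which forces the compressed map to hit $\alpha_1$ nontrivially. When $k+j\le p+1$ the corresponding operation falls below the range of generators of $SU(p+t-1)$, the detection vanishes, and the product drops to order $1$.

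The main obstacle is the boundary behaviour near the top cell, which accounts for every exceptional clause in the statement. When one or both of $k,j$ equals $p$, or when $(k,j)=(p,t)$, the class that should detect $\alpha_1$ either coincides with the top generator $x_{2(p+t)-3}$ of $SU(p+t-1)$ or sits at the very top of $A_k\wedge A_j$, so both the clean compression and the secondary-operation detection degenerate. Here I would pass to the subskeleton $Y_{k,j}$ to excise the offending top cell before running the detection argument, and treat $(k,j)=(p,t)$ and $(k,j)=(p,p)$ separately using the explicit cell structures of $B_p$ and $B_t$. Verifying that precisely these cases either drop to order $1$ or survive only after restriction to $Y_{k,j}$ is the delicate bookkeeping that forms the heart of the argument.
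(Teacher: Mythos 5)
First, a point of orientation: the paper does not prove this proposition at all. It is stated with a \qed as a record of Kishimoto's computations in \cite{MR2506127}, so you are attempting to reprove a cited theorem rather than to reconstruct an argument the paper contains. Your heuristic framing (Samelson products die in the graded-commutative Pontryagin ring, hence are controlled by $\alpha$-family torsion in the stems $2i(p-1)-1$) is sensible, but as a proof the proposal has two concrete gaps.

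The first is the claim that once the product is compressed to $S^{2(k+j-p)+1}$ the bound $\ord(\langle i_k,i_j\rangle)\mid p$ is ``immediate'' because the controlling classes are $\alpha$-family elements of order $p$. That inference is only valid when $A_k\wedge A_j$ is a single sphere, i.e.\ when $k,j\geq t+1$. When one or both of $A_k,A_j$ is a two-cell complex, the order of a map out of $A_k\wedge A_j$ is not bounded by the exponents of the relevant homotopy groups cell by cell: if $p\cdot f$ vanishes on a skeleton it merely factors through the quotient top cell, and one picks up an extra factor of $p$ for each cell dimension. This is exactly why Proposition \ref{sama} asserts order $p$ only for the restriction to $Y_{k,j}$ when $2\leq k,j\leq t$, and why the paper needs the separate Proposition \ref{samb} --- whose proof is precisely this factor-through-the-top-cell argument --- to obtain only the weaker bounds $p^2$ and $p^3$ for the unrestricted product. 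Taken at face value your step would prove order $p$ in those cases as well and collapse the case distinction that the statement (and the rest of the paper) depends on; that it proves too much is the sign that it is wrong. The second gap is the lower bound. Writing that you ``would argue by detection, forming the cofibre and exhibiting a nontrivial secondary operation'' is a plan, not an argument: no operation is constructed, no reason is given that it detects the compressed class rather than vanishing on it, and the boundary cases $(k,j)=(p,t)$ and $(p,p)$ are deferred to ``delicate bookkeeping.'' Likewise the vanishing of the obstructions to compression requires the $p$-local homotopy groups of the factors $B_m$ and $S^{2m-1}$ in a range of roughly $4p$ dimensions, which is the nontrivial input of Kishimoto's paper and is assumed rather than supplied. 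As written the proposal is a plausible account of why the answer has the shape it does, not a proof of it.
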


This does a large amount of work for us, however it leaves out two important cases.
First we deal with the case $\langle i_k,i_j\rangle$ where $k+j\geq p+2$ and $2\leq k,j\leq t$, but $(k,j)\neq(p,p)$.
For a co-H-space $X$, let $\degp:X\rightarrow X$ be the degree $p$ map.
Then we get the following upper bound.
\begin{prop}\label{samb}
	Let $(k,j)\neq(p,p)$.
	If $k+j\geq p+2$ and $2\leq k,j\leq t$ then
	\begin{itemize}
  		\item $\ord(\langle i_k,i_j\rangle)\leq p^2$ if $k+j\geq p+t$ and
  		\item $\ord(\langle i_k,i_j\rangle)\leq p^3$ if $k+j\leq p+t-1$.
	\end{itemize}
\end{prop}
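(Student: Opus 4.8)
The plan is to compute the order of $\langle i_k,i_j\rangle$ from a skeletal filtration of its source built from the two-cell structure $A_k\simeq S^{2k-1}\cup_{\alpha_1}e^{2(k+p)-3}$. Smashing the cofibration $S^{2k-1}\to A_k\to S^{2(k+p)-3}$ with $A_j$, and likewise in the second variable, presents $A_k\wedge A_j$ as a four-cell complex with cells in dimensions $2(k+j)-2$, $2(k+j)+2p-4$ (two cells) and $2(k+j)+4p-6$, all of whose attaching maps are suspensions of $\alpha_1$. By bi-naturality of the Samelson product, the restriction of $\langle i_k,i_j\rangle$ to each cell is the corresponding corner product $\langle S^{2a-1},S^{2b-1}\rangle$ in $SU(p+t-1)$ obtained by taking the bottom or top cell in each factor.

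First I would record the orders of the corners. Since $k+j\ge p+2$, the bottom corner is detected by $\alpha_1$ and so has order $p$ (compare Proposition \ref{sama}); in their respective degrees the two middle corners and the top corner are $\alpha$-family elements of order $p$, or zero. As the two middle cells share a dimension they are annihilated by a single multiplication by $p$, so the filtration has at most three steps, giving the unconditional estimate $\ord(\langle i_k,i_j\rangle)\le p^3$. This is the second assertion.

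To obtain $p^2$ when $k+j\ge p+t$ I would show that the middle step drops out. By the same compression as in Proposition \ref{sama}, each middle corner compresses to the sphere $S^{2(k+j)-1}$, and for $k+j\ge p+t$ one has $2(k+j)-1>2(p+t-1)-1$, so this target lies above the top sphere of $SU(p+t-1)$ and both middle corners vanish. After a single multiplication by $p$ kills the bottom cell, the resulting map then factors through the top cell, and the induced map $S^{2(k+j)+4p-6}\to SU(p+t-1)$ is the secondary composite (Toda bracket) across the now-trivial middle, which has order at most $p$. Hence $\ord(\langle i_k,i_j\rangle)\le p\cdot p=p^2$, which is the first assertion.

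The main obstacle is that $\pi_\ast\big(SU(p+t-1)\big)$ has large $p$-exponent, so the orders of the graded corners do not by themselves bound the order of the assembled map: at each stage one must verify that the map produced by the filtration is the expected $\alpha$-family element of order $p$, and has not acquired a high-exponent contribution from a nontrivial extension. I would control these extensions, and the Toda bracket appearing in the $p^2$ case, using the relation $\alpha_1^{2}=0$ together with the standard composition relations among the $\alpha_i$, which keep all the relevant indeterminacies of exponent $p$. Granting this, both bounds follow from the filtration exactly as above.
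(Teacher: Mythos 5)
Your overall strategy --- kill the lower skeleton of $A_k\wedge A_j$ by precomposing with a degree $p$ map and then bound the residual top-cell obstruction --- is the right shape, and is essentially the paper's. But two of your specific steps do not hold up, and they sit exactly where the $p^2$/$p^3$ dichotomy has to come from. The paper does not need your three-stage corner analysis at all: Proposition \ref{sama} (quoting Kishimoto) already gives that the restriction of $\langle i_k,i_j\rangle$ to the skeleton $Y_{k,j}$, whose cofibre is the top cell, has order $p$, so a single composition with $\degp$ reduces everything to one element $f\in\pi_{2(k+j+2p-3)}(SU(p+t-1))$. Your version restricts cell by cell, which forces you into the extension problem you yourself flag as ``the main obstacle''; the proposed resolution via $\alpha_1^{2}=0$ and ``standard composition relations'' is not a proof, and in the range $k+j\leq p+t-1$ the assertion that all relevant indeterminacy has exponent $p$ is false: Kishimoto computes $\pi_{2(k+j+2p-3)}(SU(p+t-1))\cong\Z/p^2\Z$ there, so the top-cell obstruction can genuinely have order $p^2$. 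That computation --- $\Z/p^2\Z$ for $k+j\leq p+t-1$ versus $\Z/p\Z$ for $k+j\geq p+t$ --- is the entire content of the case split in the paper's proof ($p\cdot p^2$ versus $p\cdot p$), and it appears nowhere in your argument.

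Your mechanism for the $p^2$ case is also not valid as stated. You argue that for $k+j\geq p+t$ the middle corners vanish because they compress to $S^{2(k+j)-1}$, which ``lies above the top sphere of $SU(p+t-1)$''. A map from a sphere of dimension exceeding the top exterior generator of $H_{\ast}(SU(p+t-1))$ need not be trivial --- the homotopy groups of $SU(p+t-1)$ are nonzero in arbitrarily high degrees --- and in any case $2(k+j)-1>2p-1$ already holds for all $k+j\geq p+1$, so your criterion would kill the middle corners in both cases and erase the dichotomy. Likewise, the claim that the resulting Toda bracket on the top cell has order at most $p$ is true only because $\pi_{2(k+j+2p-3)}(SU(p+t-1))\cong\Z/p\Z$ in that range, which is precisely the fact you would have to quote. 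In short: the missing ingredient is Kishimoto's computation of $\pi_{2(k+j+2p-3)}(SU(p+t-1))$ in the two ranges, and once that is used the corner-by-corner filtration becomes unnecessary.
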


\begin{proof}
Firstly we note that the inclusion $Y_{k,j}\hookrightarrow A_k\wedge A_j$ is a co-H-map.
Then we know by Proposition \ref{sama} that the composition
\begin{equation*}
	Y_{k,j}\hookrightarrow A_k\wedge A_j \xrightarrow{f} A_k\wedge A_j \xrightarrow{\langle i_k,i_j\rangle} SU(p+t-1)
\end{equation*}
is trivial.
Therefore we get an extension
\begin{equation*}
	\xymatrix{ Y_{k,j}         \ar[d]                 &                                   &           \\
			   A_k\wedge A_j   \ar[r]^-{\degp} \ar[d] & A_k\wedge A_j \ar[r]^-{\langle i_k,i_j\rangle} & SU(p+t-1) \\
			   S^{2(k+j+2p-3)} \ar[urr]_-{f}          &                                                &           }
\end{equation*}
for some map $f$. So if the order of $f$ is $p^t$ then the order of $\langle i_k,i_j\rangle$ is $p^{t+1}$.
By \cite{MR2506127} we know that if $k+j\leq p+t-1$ then $\pi_{2(k+j+2p-3)}(SU(p+t-1))\cong \Z/p^2\Z$ and that if $k+j\geq p+t$ then $\pi_{2(k+j+2p-3)}(SU(p+t-1))\cong \Zp$.
Therefore if $k+j\geq p+t$ then $\langle i_k,i_j\rangle$ has order at most $p^2$ and if $k+j\leq p+t-1$ then $\langle i_k,i_j\rangle$ has order at most $p^3$.
\end{proof}

Now we deal with the case $(k,j)=(p,t)$.
We will obtain an upper bound for the order of the map $\langle i_p,i_t\rangle: S^{2p-1}\wedge A_t\rightarrow SU(p+t-1)$.

\begin{lem}\label{samb2}
	The Samelson product $\langle i_p,i_t\rangle$ has order $\leq p^2$. 
\end{lem}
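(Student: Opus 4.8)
The plan is to run the same cell-by-cell argument as in the proof of Proposition \ref{samb}, but with the bottom cell of the domain playing the role of the skeleton $Y_{k,j}$. Since $p>t$ we have $A_p=S^{2p-1}$, so $i_p$ is a sphere inclusion, while $A_t$ is the two-cell complex with cells in dimensions $2t-1$ and $2t+2p-3$. Hence $S^{2p-1}\wedge A_t$ is itself a two-cell complex with bottom cell $S^{2p+2t-2}$ and top cell $S^{4p+2t-4}$, sitting in a cofibration
\begin{equation*}
	S^{2p+2t-2}\xrightarrow{j} S^{2p-1}\wedge A_t \xrightarrow{q} S^{4p+2t-4}.
\end{equation*}
Because $A_t$ is a suspension, $j$ is a co-H-map, which is exactly what makes the degree-$p$ argument below available.

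First I would restrict $\langle i_p,i_t\rangle$ to the bottom cell. By naturality of the Samelson product, $\langle i_p,i_t\rangle\circ j=\langle i_p,\iota\rangle$, where $\iota:S^{2t-1}\hookrightarrow A_t\xrightarrow{i_t}SU(p+t-1)$ is the inclusion of the bottom cell of $A_t$. Feeding Oka's $p$-local decomposition into the calculations of \cite{MR2506127}, a stem count shows that in degree $2p+2t-2$ the only factor contributing is the sphere $S^{2t+1}$, through $\alpha_1\in\pi_{2p+2t-2}(S^{2t+1})$; thus $\pi_{2p+2t-2}(SU(p+t-1))\cong\Zp$, and in particular $\ord(\langle i_p,i_t\rangle\circ j)\leq p$.

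Next I would mirror the degree-$p$ step of Proposition \ref{samb}. As $j$ is a co-H-map it commutes with the degree-$p$ self-maps, so the restriction of $\langle i_p,i_t\rangle\circ\degp$ to the bottom cell equals $p\cdot(\langle i_p,i_t\rangle\circ j)$, which is null by the previous paragraph. Therefore $\langle i_p,i_t\rangle\circ\degp$ extends over the cofibration, giving $g:S^{4p+2t-4}\to SU(p+t-1)$ with $\langle i_p,i_t\rangle\circ\degp=g\circ q$. Since $\degp$ induces multiplication by $p$ on maps out of the co-H-space $S^{2p-1}\wedge A_t$, this reads $p\cdot\langle i_p,i_t\rangle=g\circ q$, so that $p^2\cdot\langle i_p,i_t\rangle=p\cdot(g\circ q)$ and hence $\ord(\langle i_p,i_t\rangle)\leq p\cdot\ord(g)\leq p\cdot\exp\bigl(\pi_{4p+2t-4}(SU(p+t-1))\bigr)$.

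The hard part is to show that $\pi_{4p+2t-4}(SU(p+t-1))$ has exponent $p$, and this is the step I expect to be the main obstacle, precisely because Proposition \ref{samb} shows that groups of the form $\Z/p^2\Z$ genuinely occur in $\pi_\ast(SU(p+t-1))$; such groups arise from a sphere-bundle factor $B_k$ when both of its spheres contribute in the same degree. The point to verify is that this does not happen at the degree $4p+2t-4$: the stems cut out of the factors $S^{2k-1}$ and $S^{2k+2p-3}$ of each $B_k$ (for $2\leq k\leq t$) lie strictly between consecutive $\alpha$-family stems and so vanish $p$-locally, leaving only $\alpha_2\in\pi_{4p+2t-4}(S^{2t+1})$. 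As the stem $4p-5$ lies well below the first $\beta$-family stem $2p(p-1)-2$ for $p>5$, this class has order $p$, giving $\pi_{4p+2t-4}(SU(p+t-1))\cong\Zp$. Granting this, the displayed inequality yields $\ord(\langle i_p,i_t\rangle)\leq p^2$, as required.
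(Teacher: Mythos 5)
Your argument is essentially the paper's own proof: restrict to the bottom cell of $A_t\wedge S^{2p-1}$, use that $\pi_{2(p+t-1)}(SU(p+t-1))\cong\Zp$ together with the co-H property of the bottom-cell inclusion to show $\langle i_p,i_t\rangle\circ\degp$ extends over the top cell, and then use $\pi_{2(2p+t-2)}(SU(p+t-1))\cong\Zp$ to conclude the order is at most $p^2$. The only difference is that the paper simply cites Kishimoto \cite{MR2506127} for these two homotopy groups, whereas you re-derive them by an $\alpha$-family stem count; your count is consistent with the cited values, so the proof goes through.
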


\begin{proof}
Consider the inclusion $f:S^{2(p+t-1)}\hookrightarrow A_t\wedge S^{2p-1}$ of the bottom cell.
\begin{equation*}
\xymatrix{ S^{2(p+t-1)}       \ar[d]^-{f}                      &           \\
			 A_t\wedge S^{2p-1} \ar[r]^-{\langle i_t,i_p\rangle} & SU(p+t-1) }
\end{equation*}
Kishimoto \cite{MR2506127} tells us that $\pi_{2(p+t-1)}(SU(p+t-1))\cong \Zp$.
Therefore we have that $\langle i_p,i_t\rangle\circ f\circ \degp\simeq \ast$.
As $f$ is the inclusion of the bottom cell, it is a co-H-map, therefore
\begin{equation*}
	\langle i_p,i_t\rangle\circ \degp\circ f\simeq \ast.
\end{equation*}
So we get an extension for some map $g$:
\begin{equation*}
\xymatrix{ S^{2(p+t-1)}       \ar[d]^-{f}                 &                                                     &           \\
			 A_t\wedge S^{2p-1} \ar[r]^-{\degp} \ar[d] & A_t\wedge S^{2p-1} \ar[r]^-{\langle i_t,i_p\rangle} & SU(p+t-1) \\
			 S^{2(2p+t-2)}      \ar[urr]^-{g}          &                                                     &           }
\end{equation*}
By \cite{MR2506127} $\pi_{2(2p+t-2)}(SU(p+t-1))\cong \Zp$.
Thus the order of $\langle i_t,i_p\rangle$ is $\leq p^2$.
\end{proof}

Kishimoto gives more information than just the order of the Samelson products.
He also calculates that they factor through certain spheres in $SU(p+t-1)$, and we will do the same now.

\begin{lem}\label{samfacb}
	Let $t\neq p$. The Samelson product $\langle i_p,i_t\rangle$ factors through $S^{2t+1}$, and $\langle i_p,i_p\rangle$ factors through $S^3$.
\end{lem}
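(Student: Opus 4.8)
For the product $\langle i_p,i_p\rangle$ the assertion is already contained in Proposition~\ref{sama}: taking $k=j=p$ satisfies $t+1\le k,j\le p$, and the final clause of that proposition compresses $\langle i_p,i_p\rangle$ through $S^3$. So I would simply invoke Kishimoto there, and devote the argument to the genuinely new case, namely that $\langle i_p,i_t\rangle\colon S^{2p-1}\wedge A_t\to SU(p+t-1)$ compresses into the sphere factor $S^{2t+1}$.

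The plan is to read the compression off the quasi-$p$-regular splitting. Write $SU(p+t-1)\simeq S^{2t+1}\times W$ with $W$ the product of the remaining factors $B_2,\dots,B_t,S^{2t+3},\dots,S^{2p-1}$; let $\iota\colon S^{2t+1}\hookrightarrow SU(p+t-1)$ and $r\colon SU(p+t-1)\to S^{2t+1}$ be the inclusion and projection of the distinguished factor, so that $r\circ\iota\simeq\mathrm{id}$, and let $q\colon SU(p+t-1)\to W$ be the projection onto the rest. Since a map into a product splits, $[Z,S^{2t+1}\times W]\cong[Z,S^{2t+1}]\times[Z,W]$, the map $\langle i_p,i_t\rangle$ compresses through $\iota$ precisely when $q\circ\langle i_p,i_t\rangle\simeq\ast$. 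The whole problem therefore reduces to showing $[Z,W]=0$ for $Z=S^{2p-1}\wedge A_t$.

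Now $A_t=S^{2t-1}\cup_{\alpha_1}e^{2t+2p-3}$, so $Z$ is a two-cell complex with cells in dimensions $2t+2p-2$ and $2t+4p-4$; the resulting cofibre sequence $S^{2t+2p-2}\to Z\to S^{2t+4p-4}$ gives, for each factor $Y$ of $W$, an exact sequence $\pi_{2t+4p-4}(Y)\to[Z,Y]\to\pi_{2t+2p-2}(Y)$. It is therefore enough to see that both outer groups vanish for every factor of $W$. For this I would combine Kishimoto's computations $\pi_{2t+2p-2}(SU(p+t-1))\cong\Zp$ and $\pi_{2(2p+t-2)}(SU(p+t-1))\cong\Zp$ with the retraction: since $r\circ\iota\simeq\mathrm{id}$, the map $\iota_{\ast}\colon\pi_{\ast}(S^{2t+1})\to\pi_{\ast}(SU(p+t-1))$ is a split monomorphism, so in each of these two degrees the contribution of $S^{2t+1}$ is a direct summand of $\Zp$. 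The $\alpha$-family elements $\alpha_1\in\pi_{2t+2p-2}(S^{2t+1})$ and $\alpha_2\in\pi_{2t+4p-4}(S^{2t+1})$ are non-trivial of order $p$ by the classical computations of Toda, so each such summand is already all of $\Zp$; hence the complementary summands, which are the groups $\pi_{\ast}(Y)$ for the factors $Y$ of $W$, vanish in both degrees. Substituting into the exact sequence gives $[Z,Y]=0$ for every factor $Y$ of $W$, whence $[Z,W]=0$ and the compression follows.

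The main obstacle is exactly this homotopy-theoretic input: establishing that the unstable elements $\alpha_1$ and especially $\alpha_2$ survive, with order $p$, on $S^{2t+1}$---for small $t$ the class $\alpha_2$ sits far outside the stable range---and thereby confirming that Kishimoto's two copies of $\Zp$ are carried by the $S^{2t+1}$ factor rather than by one of the three-cell complexes $B_k$. Should the direct handling of $\alpha_2$ prove awkward, I would instead argue cell by cell: the restriction of $\langle i_p,i_t\rangle$ to the bottom cell lies in $\iota_{\ast}\pi_{2t+2p-2}(S^{2t+1})$ as above and so compresses to a multiple of $\alpha_1$; the obstruction to extending this compression over the top cell is the corresponding multiple of $\alpha_1\circ\Sigma^{2p-1}\alpha_1$, which vanishes because $\alpha_1^2=0$ in the $p$-primary stable stems for $p>5$; and the remaining discrepancy, lying in $\pi_{2t+4p-4}(SU(p+t-1))$, is absorbed by a final application of $\iota_{\ast}$.
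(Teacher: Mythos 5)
Your argument is correct and is essentially the paper's own: the paper likewise splits off the $S^{2t+1}$ factor (writing $SU(p+t-1)\simeq SU_{t+1}\times S^{2t+1}$ with $SU_{t+1}$ playing the role of your $W$) and kills the projection of $\langle i_p,i_t\rangle$ onto the complement by extending a null homotopy over the two cells of $A_t\wedge S^{2p-1}$, using the vanishing of $\pi_{2(p+t-1)}$ and $\pi_{2(2p+t-2)}$ of the complement. The only real difference is that the paper reads those two vanishing statements for $SU_{t+1}$ directly from Kishimoto's tables, so the step you flag as the main obstacle --- showing the two copies of $\Zp$ in $\pi_{\ast}(SU(p+t-1))$ are carried by $S^{2t+1}$ via the unstable nonvanishing of $\alpha_1$ and $\alpha_2$ --- is not needed (and the paper re-proves the $\langle i_p,i_p\rangle$ compression via $\pi_{4p-2}$ of the factors and a lifting criterion for maps into $B_2$, rather than quoting Proposition~\ref{sama} as you do, though your citation is legitimate since the compression is stated there).
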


\begin{proof}
We will deal with $\langle i_p,i_p\rangle$ first.
We know that 
\begin{equation*}
	SU(p+t-1)=\simeq B_2\times\ldots B_t\times S^{2t+1}\times\ldots S^{2p-1}
\end{equation*}
and so by \cite{MR2506127} we know that $\pi_{4p-2}(S^{2k-1})=0$ for $t+1\leq k\leq p$ and $\pi_{4p-2}(B_k)=0$ for $3\leq k\leq t$.
We also know that $\pi_{4p-2}(B_2)=\Zp$.
Therefore $\langle i_p,i_p\rangle$ must factor through $B_2$ and has order at most $p$.
Kishimoto \cite{MR2506127} then tells us that any map $X\rightarrow B_2$ of order $p$ lifts to $S^3$.
Hence $\langle i_p,i_p\rangle$ factors through $S^3$.

The case for $\langle i_p,i_t\rangle$ is slightly more involved.
Let $SU_{k}$ denote $SU(p+t-1)$ with the $k^{\textrm{th}}$ factor omitted.
For example
\begin{equation*}
	SU_{t+1}=B_2\times\ldots\times B_t\times S^{2t+3}\times\ldots\times S^{2p-1}.
\end{equation*}
and $SU(p+t-1)=SU_{t+1}\times S^{2t+1}$.
Let $f:S^{2(p+t-1)}\rightarrow A_t\wedge S^{2p-1}$ be the inclusion of the bottom cell and consider the composition
\begin{equation*}
	\xymatrix{ S^{2(p+t-1)}       \ar[d]^-{f} &          \\
    	         A_t\wedge S^{2p-1} \ar[r] & SU_{t+1} }
\end{equation*}
By \cite{MR2506127}, $\pi_{2(p+t-1)}(SU_{t+1})\cong 0$, meaning that we get an extension to the top cell of $A_t\wedge S^{2p-1}$:
\begin{equation*}
	\xymatrix{   S^{2(p+t-1)}       \ar[d]        &         \\
    	         A_t\wedge S^{2p-1} \ar[r] \ar[d] & SU_{t+1} \\
        	     S^{2(2p+t-2)}      \ar[ur]       &         }
\end{equation*}
By \cite{MR2506127}, $\pi_{2(2p+t-2)}(SU_{t+1})\cong 0$.
Thus
\begin{equation*}
	A_t\wedge S^{2p-1}\xrightarrow{\langle i_k,i_j\rangle} SU(p+t-1)\simeq SU_{t+1}\times S^{2t+1}
\end{equation*}
projects trivially onto $SU_{t+1}$. Therefore $\langle i_p,i_t\rangle$, must factor through $S^{2t+1}$.
\end{proof}

We deal now with length three Samelson products in $SU(p+t-1)$.
The length three Samelson products are somewhat easier to look at than the length two Samelson products.
We first note from Kishimoto's paper \cite{MR2506127} that $\langle i_j,\langle i_k,i_l\rangle\rangle$ is non-trivial only when $j+k+l$ is equal to either $2p+1$, $2p+2$, $2p+3$ or $3p$.
It is easy to see that no length three Samelson product can have order greater than $p^2$ if $t\neq p$ because
\begin{equation*}
    \langle i_j,\langle i_k,i_l\rangle \rangle \circ \underline{p}^m \simeq \langle i_j,\langle i_k,i_l\rangle\circ \underline{p}^m\rangle
\end{equation*}
and $\langle i_k,i_j\rangle$ has order $\leq p^2$ if $t\neq p$ by Propositions \ref{sama} and \ref{samb} and Lemma \ref{samb2}.

We prove the following lemma.
\begin{lem}\label{genlem2}
	Let $X$ be a CW-complex with cells only in dimensions \newline $2(m+k(p-1))-3$ where
	\begin{itemize}
  		\item $k\geq 1$,
  		\item $m\in\{3,4,5,6\}$ and
  		\item $2(m+k(p-1))-3\leq 12p-1$.
	\end{itemize}
	Then any map $X\rightarrow SU(p+t-1)$ factors through $S^{2m-1}$.
\end{lem}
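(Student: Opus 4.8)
The plan is to realise $S^{2m-1}$ as a subcomplex of $SU(p+t-1)$ — either as one of the sphere factors, or as the bottom cell of the factor $B_m$ in Oka's splitting — and then to compress the given map into it by a relative obstruction argument. If $g\colon X\to SU(p+t-1)$ is the map, it will suffice to show that the relative homotopy groups $\pi_d(SU(p+t-1),S^{2m-1})_{(p)}$ vanish for each dimension $d=2(m+k(p-1))-3$ carrying a cell of $X$; the standard compression lemma then deforms $g$ into $S^{2m-1}$ one cell at a time.

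I would compute these relative groups from the long exact sequence of the pair, which reduces matters to the two absolute groups $\pi_d(SU(p+t-1))_{(p)}$ and $\pi_{d-1}(S^{2m-1})_{(p)}$. The governing principle is a parity observation together with the bound $d\le 12p-1$. In this range the $p$-local homotopy of an odd sphere $S^{2n-1}$ consists only of the fundamental class in degree $2n-1$ together with the image-of-$J$ classes $\alpha_s$, the latter lying in the \emph{even} degrees $(2n-1)+(2s(p-1)-1)$; the role of the bound $12p-1$ is precisely to keep us below $\beta_1$ and $\alpha_p$, so that no further $p$-torsion intervenes. Since $d$ is odd while $d-1$ is even but never equal to one of the $\alpha$-degrees of $S^{2m-1}$, one reads off $\pi_{d-1}(S^{2m-1})_{(p)}=0$ at once.

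For the term $\pi_d(SU(p+t-1))_{(p)}$ I would appeal to the splitting $SU(p+t-1)\simeq\prod_k B_k\times\prod_i S^{2i-1}$, so that $\pi_d$ is the sum of the contributions of the factors (this may equally be read off from Kishimoto's calculations). For the cells in question $d$ lies above all the low-dimensional cells of the factors, so no fundamental class contributes in degree $d$; and, $d$ being odd, the even-degree $\alpha$-classes cannot contribute either. For the twisted factors I would make this precise through the fibration $S^{2k-1}\to B_k\to S^{2(k+p)-3}$: its long exact sequence, with connecting map detected by the $\p$-linking, i.e.\ by $\alpha_1$, squeezes $\pi_d(B_k)_{(p)}$ between two vanishing sphere groups. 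Thus $\pi_d(SU(p+t-1))_{(p)}=0$, the relative group vanishes, and $g$ compresses to $S^{2m-1}$.

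The step I expect to be the main obstacle is the $B_k$ bookkeeping, and in particular checking that the base degree $2(k+p)-3$ never meets $d$ for the cells under consideration: this is the one place where a fibre or base fundamental class could survive, and where the precise indexing and the $\alpha_1$ connecting maps must be handled with care. One must also confirm that the bound $12p-1$ genuinely confines everything to the elementary part of the homotopy of spheres (below $\beta_1$ and $\alpha_p$); once these vanishing statements are secured, the compression itself is routine.
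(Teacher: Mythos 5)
Your overall strategy---compress the map cell by cell using vanishing of homotopy groups in the cell dimensions, with a parity/congruence argument doing the real work---has the right shape and is close in spirit to the paper's induction on skeleta. But your central computational claim, that $\pi_d(SU(p+t-1))_{(p)}=0$ for every cell dimension $d=2(m+k(p-1))-3$, is too strong and cannot be correct. If it held, every map $X\rightarrow SU(p+t-1)$ from such a complex would be null homotopic and the lemma would be vacuous; in particular the length three Samelson products $\langle i_j,\langle i_k,i_l\rangle\rangle$ with $j+k+l\in\{2p+1,2p+2,2p+3\}$, whose domains $A_j\wedge A_k\wedge A_l$ have cells exactly in these dimensions, would all vanish. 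That contradicts Kishimoto's computations, which make some of these products non-trivial (this is what forces $\nil(SU(n))=3$ in part of the range, and is the reason the ``amended'' Whitehead products appear in Theorem \ref{a}). The error lies in asserting that in this range the $p$-local homotopy of an odd sphere consists only of the fundamental class and the $\alpha$-family: that is the \emph{stable} picture, whereas for $m\leq 6$ the degrees $d$ in question lie far outside the stable range of $S^{2m-1}$, and there are unstable classes (and further classes carried by the two-cell factors $B_k$, where the $\alpha_1$-attaching map creates homotopy not visible on either cell separately) sitting precisely in the odd degrees $2m+2k(p-1)-3$. Those classes are the whole point of the lemma.

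What the paper actually proves is the weaker vanishing statement $\pi_d(SU_m)=0$, where $SU_m$ denotes the product of all factors of $SU(p+t-1)$ \emph{except} the $m^{\textrm{th}}$; this is the correct output of your congruence observation, since only the factor whose index is congruent to $m$ modulo $p-1$ can carry $p$-local homotopy in degree $d$, and for indices between $2$ and $p$ that factor must be the $m^{\textrm{th}}$ one. An induction on the skeleta of $X$ (your compression argument, applied to the composite $X\rightarrow SU(p+t-1)\rightarrow SU_m$) then shows this composite is null, so the original map compresses into the $m^{\textrm{th}}$ factor, which is $S^{2m-1}$ or $B_m$. In the latter case a second step is needed that your write-up does not supply: compose with the projection $q:B_m\rightarrow S^{2(m+p)-3}$, check that the relevant homotopy groups of this base sphere vanish, and lift through the fibration $S^{2m-1}\rightarrow B_m\rightarrow S^{2(m+p)-3}$ to land in $S^{2m-1}$. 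So the repair is to replace ``$\pi_d(SU(p+t-1))=0$'' by ``$\pi_d(SU_m)=0$'', quoting Kishimoto's unstable computations rather than the stable homotopy of spheres, and to add the $B_m$-to-$S^{2m-1}$ lifting step.
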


\begin{proof}
As will be made clear from the proof, it is sufficient to assume that all the cells of $X$ are in different dimensions.
We proceed by induction on the dimension of $X$ and as the base case take $X$ to be $S^{2(m+p)-5}$.
Recall that $SU_m$ is $SU(p+t-1)$ with the $m^{\textrm{th}}$ factor omitted.
As all homotopy groups of the form $\pi_{2(m+p)-5}(SU_m)$ for $2(m+p)-5\leq 12p-1$ are zero \cite{MR2506127} we see that any map
\begin{equation*}
	S^{2(m+p)-5}\rightarrow SU(p+t-1)
\end{equation*}
must factor through the $m^{\textrm{th}}$ factor of $SU(p+t-1)$, which is $S^{2m-1}$ if $t+1\leq m\leq p$ or $B_m$ otherwise.
If it factors through $B_m$ consider the composition
\begin{equation*}
	S^{2(m+p)-5}\rightarrow B_m\xrightarrow{q} S^{2(m+p)-3}
\end{equation*}
where $q$ comes from the fibration
\begin{equation*}
	S^{2m-1}\rightarrow B_m\xrightarrow{q} S^{2m+2p-3}.
\end{equation*}
Then since $\pi_{2(m+p)-5}(S^{2(m+p)-3})=0$ this map lifts to $S^{2m-1}$.
This completes the base case.

For the inductive step let $X$ have dimension $2(m+k(p-1))-3$ where $k>1$. 
Let $X'$ be the $(2(m+k(p-1))-4)$-skeleton of $X$.
Then by the induction hypothesis the composition
\begin{equation*}
	X'\hookrightarrow X\rightarrow SU_m
\end{equation*}
is null homotopic.
Therefore we get an extension
\begin{equation*}
	\xymatrix{   X'  \ar[d]                         &      \\
				 X                 \ar[r] \ar[d]    & SU_m \\
				 S^{2(m+k(p-1))-3} \ar[ur]^-f       &      }
\end{equation*}
for some map $f$.
By \cite{MR2506127} $\pi_{2(m+k(p-1))-3}(SU_m)=0$.
Therefore $f$ is also null homotopic, implying that any map $X\rightarrow SU(p+t-1)$ must factor through the $m^{\textrm{th}}$ factor of $SU(p+t-1)$, which is $S^{2m-1}$ if $t+1\leq m\leq p$ or $B_m$ otherwise.
If the map factors through $B_m$ we proceed as in the base case and get a lift to $S^{2m-1}$.
Therefore we get the required result.
\end{proof}

\begin{cor}\label{sam3}
	Let $k+j+l=2p+1$, $2p+2$ or $2p+3$. 
	Then the Samelson $\langle i_k,\langle i_j,i_l\rangle\rangle$ factors through $S^{2(k+j+l-2p)-5}$.
	If $k+j+l=3p$, then Samelson product $\langle i_k,\langle i_j,i_l\rangle\rangle$ factors through $S^{3}$.
	\qed  
\end{cor}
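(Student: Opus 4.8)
The plan is to read the cell structure of the domain of the triple Samelson product straight off and feed it into Lemma \ref{genlem2}. The map $\langle i_k,\langle i_j,i_l\rangle\rangle$ has domain $A_k\wedge A_j\wedge A_l$. Recall that for $s\le t$ the complex $A_s$ has exactly two cells, in dimensions $2s-1$ and $2s+2p-3=(2s-1)+2(p-1)$, while for $s>t$ it is the single sphere $S^{2s-1}$; in every case each cell of $A_s$ lies in dimension $(2s-1)+\epsilon\cdot 2(p-1)$ with $\epsilon\in\{0,1\}$. Smashing three such complexes, every cell of $A_k\wedge A_j\wedge A_l$ therefore sits in dimension $2(k+j+l)-3+2n(p-1)$ for some $n\in\{0,1,2,3\}$.

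First I would treat the cases $k+j+l\in\{2p+1,2p+2,2p+3\}$. Writing $2p=2(p-1)+2$, the dimension $2(k+j+l)-3+2n(p-1)$ rearranges to $2\big((k+j+l-2p+2)+(n+2)(p-1)\big)-3$, so every cell has the form $2(m+k'(p-1))-3$ with the \emph{fixed} value $m=k+j+l-2p+2$ and $k'=n+2\ge 1$. For these three values of $k+j+l$ we get $m\in\{3,4,5\}\subset\{3,4,5,6\}$, and the top cell has dimension at most $2(2p+3)-3+6(p-1)=10p-3\le 12p-1$, so both numerical hypotheses of Lemma \ref{genlem2} hold. The lemma then yields a factorisation of $\langle i_k,\langle i_j,i_l\rangle\rangle$ through $S^{2m-1}$, which is the asserted sphere.

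The remaining case $k+j+l=3p$ is where the real obstacle lies. Since each index is at most $p$ this forces $k=j=l=p$, so the domain collapses to the single sphere $S^{2p-1}\wedge S^{2p-1}\wedge S^{2p-1}=S^{6p-3}$; applying Lemma \ref{genlem2} directly (with $m=3$) would only factor the map through $S^5$, whereas we need $S^3$, so a separate argument is required. To improve the conclusion I would invoke Lemma \ref{samfacb}, which factors $\langle i_p,i_p\rangle$ as $S^{4p-2}\xrightarrow{h}S^3\xrightarrow{j_3}SU(p+t-1)$. By naturality of the Samelson product in its second variable, $\langle i_p,\langle i_p,i_p\rangle\rangle=\langle i_p,j_3\circ h\rangle=\langle i_p,j_3\rangle\circ(1_{S^{2p-1}}\wedge h)$, so everything reduces to showing that $\langle i_p,j_3\rangle\colon S^{2p-1}\wedge S^3\to SU(p+t-1)$ factors through $S^3$. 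The hard part will be exactly this last factorisation: one must check that forming the Samelson product of $i_p$ with the subgroup inclusion $S^3=SU(2)\hookrightarrow SU(p+t-1)$ does not push the image into a higher sphere, and I expect to do this by identifying $\langle i_p,j_3\rangle$ with the restriction of $\langle i_p,i_2\rangle$ to the bottom cell of $A_2$ and then analysing the resulting class before composing with $1_{S^{2p-1}}\wedge h$.
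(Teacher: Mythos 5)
Your treatment of the cases $k+j+l\in\{2p+1,2p+2,2p+3\}$ is correct and is exactly the route the paper intends: the corollary is stated as an immediate consequence of Lemma \ref{genlem2}, and your cell count (every cell of $A_k\wedge A_j\wedge A_l$ in dimension $2(k+j+l)-3+2n(p-1)$, $0\le n\le 3$, hence of the form $2(m+k'(p-1))-3$ with $m=k+j+l-2p+2\in\{3,4,5\}$ and $k'\ge 2$, top dimension $10p-3\le 12p-1$) is the whole argument. Note that your computation actually lands on $S^{2m-1}=S^{2(k+j+l-2p)+3}$, whereas the statement as printed says $S^{2(k+j+l-2p)-5}$, which is negative-dimensional for all three values; your sphere is the correct one and the printed exponent is a misprint, so you should not describe your output as literally ``the asserted sphere'' without flagging this.

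The $3p$ case is where your proposal is genuinely incomplete, and you have correctly put your finger on a step the paper's \qed\ glosses over: for $X=S^{6p-3}$ the only admissible solution of $3p=m+k'(p-1)$ with $m\in\{3,4,5,6\}$ is $(m,k')=(3,3)$, so Lemma \ref{genlem2} delivers only $S^5$, not the claimed $S^3$. Your reduction via Lemma \ref{samfacb} and naturality, $\langle i_p,\langle i_p,i_p\rangle\rangle\simeq\langle i_p,j_3\rangle\circ(1\wedge h)$, is legitimate, but it transfers the entire difficulty onto the unproved assertion that $\langle i_p,j_3\rangle\colon S^{2p+2}\to SU(p+t-1)$ factors through $S^3$, and the route you sketch for that step is unlikely to close the gap: identifying $j_3$ with the bottom cell of $A_2$ reduces you to $\langle i_p,i_2\rangle$, and Proposition \ref{sama} compresses that product only to $S^{2(p+2-p)+1}=S^5$ (indeed every compression result available in the paper for these classes stops at $S^5$). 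So as written your argument establishes a factorisation of the $3p$ case through $S^5$ but not through $S^3$. It is worth observing that $S^5$ would suffice for every subsequent use of this corollary --- the definition of $a_{k,j,l}$, Lemma \ref{trivimage} and the order estimates in the proof of Lemma \ref{z} only require a factorisation through some odd sphere of dimension well below $4p$ --- so the defensible repair is to weaken the $3p$ clause to $S^5$ rather than to manufacture a proof of the $S^3$ claim.
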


We exclude the case where $t=p$.
This case represents another increase in technical difficulty and as yet cannot be resolved using this method.
The increase in difficulty lies in finding a factorization of $\langle i_p,i_p\rangle$ similar to those for the other Samelson products.
It is probably the case that when $t=p$ the Samelson product $\langle i_p,i_p\rangle:A_p^{(2)}\rightarrow SU(2p-1)$ has order $\leq p^3$ and factors through $A_2$.
The first of these assertions can be shown using the methods above.
The second assertion presents some difficulty.
If this factorization can be shown however, then it would be relatively simple to make slight alterations to the subsequent arguments to show that the $p^3$ power map on $SU(2p-1)$ is an H-map.

\section{Proof Of Theorem \ref{a}}

Recall that
\begin{equation*}
	A= A_2\vee\ldots\vee A_p.
\end{equation*}
Including the wedge $\bigvee\Sigma A_i$ into the product $\prod \Sigma A_i$ we obtain a homotopy fibration
\begin{equation*}
	Q\xrightarrow{\overline{f}} \Sigma A\rightarrow \prod_{k=2}^{p}\Sigma A_k
\end{equation*}
which defines the space $Q$ and the map $\overline{f}$.
Consider the homotopy fibration
\begin{equation*}
	\Omega Q\xrightarrow{\Omega\overline{f}} \Omega\Sigma A\rightarrow \prod_{k=2}^{p}\Omega\Sigma A_k.
\end{equation*}

\begin{thm}[Hilton-Milnor]\label{HM}
	There are homotopy equivalences
	\begin{equation*}
		\Omega\Sigma A\simeq \left(\prod_{k=2}^{p}\Omega\Sigma A_k\right) \times \Omega Q
	\end{equation*}
	and
	\begin{equation*}
		\Omega Q=\prod_{j\in\mathcal{J}} \Omega\Sigma \left( A_2^{(j_2)}\wedge A_3^{(j_3)}\wedge ... A_p^{(j_p)}\right)
	\end{equation*}
	where $\mathcal{J}$ runs over an additive basis of the free Lie algebra $L\langle u_2,\ldots,u_p\rangle$, but without the basis elements $u_2,\ldots,u_p$.
	Further, if for $2\leq r\leq p$ the map
	\begin{equation*}
		s_r:\Sigma A_r \rightarrow \bigvee \Sigma A
	\end{equation*}
	is the inclusion of the $r^{\textrm{th}}$-wedge summand, then the map $\Omega\overline{f}$ restricted to
	\begin{equation*}
		\Omega\Sigma\left(A_2^{(j_2)}\wedge\ldots\wedge A_p^{(j_p)} \right)
	\end{equation*}
	is the loops on the iterated Whitehead product of the maps $s_r$ corresponding to the index $j\in \mathcal{J}$.
	\qed
\end{thm}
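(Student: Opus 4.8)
The plan is to deduce both equivalences from a single statement about the tensor algebra $H_{\ast}(\Omega\Sigma A)$ and to verify that statement by combining Bott--Samelson with the (graded) Poincar\'e--Birkhoff--Witt theorem. Since the bottom cell of $A$ is $S^3$, every space in sight is a simply connected $p$-local loop space of finite type, so by Whitehead's theorem it is enough to produce a map inducing an isomorphism on $\tilde{H}_{\ast}(-;\Zp)$. By Bott--Samelson, $H_{\ast}(\Omega\Sigma A)\cong T(V)$ with $V=\tilde{H}_{\ast}(A)=\bigoplus_{k=2}^{p}V_k$ and $V_k=\tilde{H}_{\ast}(A_k)$, and this tensor algebra is $UL(V)$, the universal enveloping algebra of the free graded Lie algebra on $V$.

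I would first split off the product factor. Looping the summand inclusions $\Sigma A_k\hookrightarrow\Sigma A$ and multiplying them with the loop product gives a map $\sigma:\prod_{k=2}^{p}\Omega\Sigma A_k\to\Omega\Sigma A$. On homology $\sigma$ is the canonical multiplication $\bigotimes_k T(V_k)\to T(V)$, and composing with the projection $\Omega\Sigma A\to\prod_k\Omega\Sigma A_k$ recovers the identity, since the composite $A_k\hookrightarrow A\to A_l$ is the identity for $l=k$ and null otherwise. Thus $\sigma$ is a section of the looped fibration $\Omega Q\xrightarrow{\Omega\overline{f}}\Omega\Sigma A\to\prod_k\Omega\Sigma A_k$, and a section of a loop fibration splits it via $(x,b)\mapsto x\cdot\sigma(b)$, giving $\Omega\Sigma A\simeq\left(\prod_k\Omega\Sigma A_k\right)\times\Omega Q$.

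For the fibre I would realise the Poincar\'e--Birkhoff--Witt decomposition geometrically. For each basic product $w$ in the symbols $u_2,\ldots,u_p$, of multiweight $(j_2,\ldots,j_p)$, put $X_w=A_2^{(j_2)}\wedge\cdots\wedge A_p^{(j_p)}$ and form the iterated Whitehead product of the inclusions $s_r$ dictated by the bracketing of $w$; its adjoint is a map $\Sigma X_w\to\Sigma A$. Since any $w$ of length $\geq 2$ involves at least two distinct summands, each projection $\Sigma A\to\Sigma A_r$ sends this Whitehead product to an iterated product with a null entry, hence to zero, so the map compresses into the fibre $Q$. Looping and multiplying these over $w\in\mathcal{J}$ (the basic products of length $\geq 2$) yields $\mu:\prod_{j\in\mathcal{J}}\Omega\Sigma X_j\to\Omega Q$, where $H_{\ast}(\Omega\Sigma X_w)=T(W_w)$ with $W_w=V_2^{\otimes j_2}\otimes\cdots\otimes V_p^{\otimes j_p}$.

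The crux is showing $\mu$ is a homology isomorphism, and this is where geometry and algebra must be matched. Under Bott--Samelson a Whitehead product corresponds to a Samelson product, which in $T(V)$ is the graded commutator, i.e.\ the Lie bracket; hence the looped adjoint of the shape-$w$ Whitehead product carries $W_w$ isomorphically onto the span of the shape-$w$ brackets in $L(V)\subset T(V)$ and extends multiplicatively to an embedding of $T(W_w)$. That the product of these embeddings is an isomorphism in each degree is exactly the graded PBW identity $T(V)\cong\bigotimes_w T(W_w)$, whose Hilbert-series shadow is the Witt formula $\frac{1}{1-\sum_k y_k}=\prod_{\mathbf{j}}(1-y^{\mathbf{j}})^{-n_{\mathbf{j}}}$, with $n_{\mathbf{j}}$ the number of basic products of multiweight $\mathbf{j}$; degreewise convergence of the infinite product is automatic because the connectivity of $X_w$ grows with $\sum_k j_k$, so only finitely many factors are nonzero in any fixed degree. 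Finally, the last assertion of the theorem is immediate from the construction: each factor $\Omega\Sigma X_j$ of $\Omega Q$ was defined precisely so that its composite with $\Omega\overline{f}$ into $\Omega\Sigma A$ is the loops on the iterated Whitehead product of the $s_r$ indexed by $j$.
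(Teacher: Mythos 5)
The paper does not prove this statement at all: it is recorded as the classical Hilton--Milnor theorem and stated without proof (the tombstone follows the statement directly, and for the detailed construction the paper simply points to Whitehead's book elsewhere in the text). Your sketch is essentially the standard modern proof of that classical theorem --- split off $\prod_{k}\Omega\Sigma A_k$ by multiplying the looped summand inclusions, realise each basic product by an iterated Whitehead product compressed into the fibre $Q$, and check that the assembled map is a $\Zp$-homology isomorphism via Bott--Samelson and the tensor-algebra decomposition indexed by basic products --- and it is sound in outline. Two points deserve more care than you give them. First, your assertion that every basic product of length at least two involves two distinct wedge summands (which is what lets each coordinate projection $\Sigma A\rightarrow\Sigma A_r$ kill the Whitehead product, so that it compresses into $Q$) is true, but only because the indexing object $L\langle u_2,\ldots,u_p\rangle$ is the \emph{ungraded} free Lie algebra on abstract symbols, where a single generator supports no nonzero brackets of length $\geq 2$; in the graded Lie algebra on odd-degree classes one has $[u,u]\neq 0$, so the justification should be tied explicitly to the Hall/Lyndon basis of the ungraded free Lie algebra. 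Second, the identity $T(V)\cong\bigotimes_{w}T(W_w)$ is not the Poincar\'e--Birkhoff--Witt theorem itself but the algebraic form of Hilton's theorem: one first needs the decomposition of the free Lie algebra $L(V)$ into its shape-$w$ summands (a Hall-basis argument) before PBW and the Witt-formula comparison of Hilbert series close the argument. Neither point is a substantive gap --- both are standard and both are in the cited literature --- but they are exactly where a careful reader would ask you to supply the reference or the missing lemma.
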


For $2\leq k\leq p$, let $S_k$ be the $k^{\textrm{th}}$ factor of $SU(p+t-1)$.
Note that $S_k=B_k$ if $2\leq k\leq t$ and $S_k=S^{2k-1}$ if $t+1\leq k\leq p$.
Then a construction of Cohen and Neisendorfer \cite{MR764588} allows us to produce a fibration over each factor of $SU(p+t-1)$.
Let $[f,g]$ be the Whitehead product of maps $f$ and $g$.

\begin{thm}[Cohen-Neisendorfer]\label{CN}
	There exists a fibration
	\begin{equation*}
		\Omega R_k\xrightarrow{\Omega f_k} \Omega\Sigma A_k \xrightarrow{\iota_k} S_k
	\end{equation*}
	such that
	\begin{itemize}
		\item $R_k$ is a retract of $A_k^{(2)}\vee A_k^{(3)}$ if $S_k=B_k$ or
		\item $R_k=S^{4k-1}$ if $S_k=S^{2k-1}$.
	\end{itemize}
	The map $f_k$ factors through 
	\begin{itemize}
		\item $\Sigma A_k^{(2)}\xrightarrow{[ i_k,i_k]} \Sigma A_k$ if $S_k=B_k$ or
		\item $\Sigma A_k^{(2)}\vee A_k^{(3)}\xrightarrow{[ i_k,i_k]\vee[ i_k,[ i_k,i_k]]} \Sigma A_k$ if $S_k=S^{2k-1}$.
	\end{itemize}	
	Furthermore, $\Omega f_k$ has a left homotopy inverse and so there is a homotopy decomposition
	\begin{equation*}
		\Omega\Sigma A_k \simeq S_k\times \Omega R_k.
	\end{equation*}
\end{thm}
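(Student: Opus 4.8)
The plan is to realise geometrically the short exact sequence of Hopf algebras underlying the abelianisation. By the Bott--Samelson theorem, $H_{\ast}(\Omega\Sigma A_k)\cong T(V_k)$ with $V_k=\tilde{H}_{\ast}(A_k)$ concentrated in odd degrees, and by the Milnor--Moore and Poincar\'e--Birkhoff--Witt theorems this is the universal enveloping algebra $UL(V_k)$ of the free Lie algebra $L(V_k)$. The abelianisation $a\colon L(V_k)\to L(V_k)_{ab}=V_k$ has kernel the commutator subalgebra $[L,L]$, and since $V_k$ is odd we have $UL(V_k)_{ab}\cong\Lambda(V_k)\cong H_{\ast}(S_k)$. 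Thus I want a map $\iota_k\colon\Omega\Sigma A_k\to S_k$ inducing $Ua$ on homology, together with a space $R_k$ and a map $f_k\colon R_k\to\Sigma A_k$ whose looped adjoint realises the inclusion $U[L,L]\hookrightarrow UL(V_k)$, so that $\Omega f_k$ becomes the fibre inclusion; the tensor decomposition $UL(V_k)\cong\Lambda(V_k)\otimes U[L,L]$ coming from PBW is the homological shadow of the splitting to be proved.

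First I would construct $f_k$ out of Whitehead products. The generators of $[L,L]$ of lowest bracket length are basic commutators, and in the needed range these are detected by the Whitehead products $[i_k,i_k]\colon\Sigma A_k^{(2)}\to\Sigma A_k$ and $[i_k,[i_k,i_k]]\colon\Sigma A_k^{(3)}\to\Sigma A_k$. Localised at $p$ the symmetric group actions on $A_k^{(2)}$ and $A_k^{(3)}$ are carried by idempotents in the group rings, which are defined because $p$ is large, so one may split off the retract $R_k$ of $A_k^{(2)}\vee A_k^{(3)}$ carrying exactly the Lie generators and take $f_k$ to be the composite of this retraction with the appropriate Whitehead products. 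In the sphere case $V_k$ has a single odd generator $u$, whence $[u,[u,u]]=0$, the commutator subalgebra is generated by $[u,u]$ alone, and the construction collapses to $R_k=S^{4k-1}$ with $f_k=[i_k,i_k]$.

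Next I would produce a retraction for $\Omega f_k$. Composing $\Omega f_k\colon\Omega R_k\to\Omega\Sigma A_k$ with the appropriate James--Hopf invariants $H_n\colon\Omega\Sigma A_k\to\Omega\Sigma A_k^{(n)}$ followed by the retractions onto the summands of $A_k^{(n)}$ gives a self-map of $\Omega R_k$ which on homology is multiplication by the combinatorial coefficient of the corresponding basic commutator, for example $\pm 2$ on a self-bracket such as $[u,u]$ and $\pm 1$ on $[u,v]$. Since $p$ is odd these coefficients are units, so the self-map is a $p$-local equivalence and provides a left homotopy inverse $r$ of $\Omega f_k$. This exhibits $\Omega R_k$ as a retract of $\Omega\Sigma A_k$.

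Finally I would identify the complementary factor with $S_k$ and read off the fibration. The suspension $E\colon A_k\to\Omega\Sigma A_k$ includes the bottom cells, and using the loop multiplication to fill in the product class I would extend $E$ over the top cell of $B_k$ to a map $\sigma\colon S_k\to\Omega\Sigma A_k$, the obstruction vanishing because the attaching map of that cell becomes a commutator that dies under $\iota_k$ (in the sphere case $\sigma=E$ already). The composite $S_k\times\Omega R_k\xrightarrow{\sigma\times\Omega f_k}\Omega\Sigma A_k\times\Omega\Sigma A_k\to\Omega\Sigma A_k$ given by the loop multiplication then induces on homology the PBW isomorphism $\Lambda(V_k)\otimes U[L,L]\xrightarrow{\cong}T(V_k)$; as all the spaces are simply connected and of finite type, Whitehead's theorem promotes this to the $p$-local equivalence $\Omega\Sigma A_k\simeq S_k\times\Omega R_k$, and taking $\iota_k$ to be the projection onto $S_k$ yields the asserted fibration with fibre $\Omega R_k$. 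The hard part is the $B_k$ case of the two middle steps: producing the idempotent that cuts $R_k$ out of $A_k^{(2)}\vee A_k^{(3)}$ so that $f_k$ sees precisely the commutators, and extending $E$ over the top cell of $B_k$. Both rely on $p$ being large enough that the relevant James--Hopf coefficients are units and the symmetric group idempotents exist $p$-locally.
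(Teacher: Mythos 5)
The paper does not actually prove this statement: Theorem \ref{CN} is imported wholesale from Cohen and Neisendorfer \cite{MR764588}, so there is no in-paper argument to compare yours against. Judged on its own terms, your sketch reproduces the strategy of the original construction: realise the Poincar\'e--Birkhoff--Witt splitting $T(V_k)\cong\Lambda(V_k)\otimes U[L,L]$ geometrically, cut $R_k$ out of the smash powers of $A_k$ by $p$-local symmetric-group idempotents so that it carries the space of free Lie generators of $[L,L]$ (five-dimensional in the two-cell case, matching the paper's earlier remark that $R_k$ is a $5$-cell complex), map it in by Whitehead products, retract back using James--Hopf invariants whose leading coefficients are units at odd primes, and finish with a homology computation and Whitehead's theorem. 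You have also silently corrected the case-swap in the statement as printed: the wedge $A_k^{(2)}\vee A_k^{(3)}$ and the length-three Whitehead product belong to the $B_k$ case, while $R_k=S^{4k-1}$ with $f_k=[i_k,i_k]$ alone belongs to the sphere case, which is the right reading.

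The one step I would not accept as written is the construction of the section $\sigma\colon B_k\to\Omega\Sigma A_k$. The obstruction to extending $E$ over the top cell of $B_k$ lives in $\pi_{4k+2p-5}(\Omega\Sigma A_k)$, so it is beside the point whether the attaching map ``dies under $\iota_k$''; you need $E$ composed with the attaching map to be null in $\Omega\Sigma A_k$ itself. The attaching map is a Whitehead product of the two lower cell inclusions only modulo error terms, and while the Whitehead product part suspends to zero, the error terms require an argument you have not supplied. The cleaner route, and the one consistent with how the paper uses Theorem \ref{James} everywhere else, is to go the other way around: $B_k$ is a retract of the loop space $SU(p+t-1)$, hence a homotopy associative H-space $p$-locally, so James's theorem extends the skeletal inclusion $A_k\hookrightarrow B_k$ to an H-map $\iota_k\colon\Omega\Sigma A_k\to B_k$ inducing the abelianisation $Ua$ in homology; a section of $\iota_k$ then falls out of the splitting you have already built from $\Omega f_k$ and its left inverse, rather than being an input to it.
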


Combining Theorems \ref{HM} and \ref{CN}, and noting that $SU(p+t-1)\simeq \prod_{k=2}^{p}S_k$, we get the following.

\begin{lem}\label{same}
	There exists a homotopy equivalence
	\begin{equation*}
		\Omega\Sigma A \simeq SU(p+t-1) \times \prod_{k=2}^{p} \Omega R_k \times \Omega Q.
	\end{equation*}
	\qed
\end{lem}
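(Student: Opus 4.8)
The plan is to assemble the desired equivalence directly from the two structural theorems just stated, so the argument is essentially a substitution followed by a regrouping of product factors. First I would invoke the Hilton-Milnor theorem (Theorem \ref{HM}) to obtain the equivalence
\begin{equation*}
\Omega\Sigma A \simeq \left(\prod_{k=2}^{p}\Omega\Sigma A_k\right)\times \Omega Q.
\end{equation*}
This isolates the individual loop suspensions $\Omega\Sigma A_k$ as product factors, up to the correction term $\Omega Q$ which records the higher Hilton-Milnor summands indexed by $\mathcal{J}$.

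Next I would apply the Cohen-Neisendorfer decomposition (Theorem \ref{CN}) to each factor in turn. That theorem furnishes, for every $2\leq k\leq p$, a homotopy equivalence $\Omega\Sigma A_k\simeq S_k\times\Omega R_k$. Substituting this into each of the $p-1$ factors above, and using that a finite product of homotopy equivalences is again a homotopy equivalence, I obtain
\begin{equation*}
\Omega\Sigma A\simeq \left(\prod_{k=2}^{p}\left(S_k\times\Omega R_k\right)\right)\times\Omega Q.
\end{equation*}
Since products of spaces commute up to homotopy, I may freely separate the $S_k$ from the $\Omega R_k$, which gives
\begin{equation*}
\Omega\Sigma A\simeq \left(\prod_{k=2}^{p}S_k\right)\times\left(\prod_{k=2}^{p}\Omega R_k\right)\times\Omega Q.
\end{equation*}

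Finally I would use the identification $SU(p+t-1)\simeq\prod_{k=2}^{p}S_k$, which follows from Oka's $p$-local decomposition recorded in Section 2 together with the convention $S_k=B_k$ for $2\leq k\leq t$ and $S_k=S^{2k-1}$ for $t+1\leq k\leq p$. Replacing the first product by $SU(p+t-1)$ then yields exactly the claimed equivalence. There is no serious obstacle in this argument: the only point requiring any care is that the regrouping of factors is legitimate, which holds because we are dealing with genuine finite products of spaces, and because the statement asserts only the \emph{existence} of an equivalence and so the constituent equivalences need not be tracked compatibly with any prescribed maps.
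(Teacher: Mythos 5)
Your proposal is correct and is essentially identical to the paper's own argument, which simply combines Theorem \ref{HM} with Theorem \ref{CN} and the identification $SU(p+t-1)\simeq\prod_{k=2}^{p}S_k$. The paper treats this as immediate (the lemma is stated with no further proof), and your slightly more explicit regrouping of the product factors fills in exactly the intended steps.
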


A consequence of Cohen and Neisendorfer's work \cite{MR764588} is that the map $\Omega i$ has a right homotopy inverse.
Therefore we know that there is a decomposition.

\begin{lem}\label{decomp}
	$\Omega\Sigma A\simeq SU(p+t-1)\times \Omega F$.
	\qed
\end{lem}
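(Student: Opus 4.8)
The plan is to exploit the loop-space structure of the total space of (\ref{fib1}) together with the right homotopy inverse of $\Omega i$ furnished by Cohen and Neisendorfer. Write $B=SU(p+t-1)$, let $s:B\to\Omega\Sigma A$ be a basepoint-preserving right homotopy inverse of $\Omega i$ (its existence is the consequence of Theorem \ref{CN} and Lemma \ref{same} recorded just above), and let $\mu$ denote the loop multiplication on $\Omega\Sigma A$. Using $\mu$ I would build the candidate equivalence
\[
\theta : SU(p+t-1)\times\Omega F \xrightarrow{\,s\times\Omega\nu\,} \Omega\Sigma A\times\Omega\Sigma A \xrightarrow{\,\mu\,} \Omega\Sigma A ,
\]
given on points by $\theta(x,y)=s(x)\cdot\Omega\nu(y)$, and then show that $\theta$ is a homotopy equivalence.

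The first step is to verify that $\theta$ is a map of fibrations over $B$. Since $\Omega i$ is a looped map it is an H-map, so $\Omega i\circ\mu\simeq\mu_B\circ(\Omega i\times\Omega i)$, where $\mu_B$ is the (loop) multiplication on $SU(p+t-1)=\Omega BSU(p+t-1)$. Combining $\Omega i\circ s\simeq\mathrm{id}$ with the fact that $\Omega i\circ\Omega\nu\simeq\ast$, as $\Omega\nu$ and $\Omega i$ are consecutive maps in (\ref{fib1}), I obtain $\Omega i\circ\theta\simeq\mu_B\circ(\mathrm{id}\times\ast)\simeq\mathrm{pr}_1$, the projection onto $SU(p+t-1)$. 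Thus $\theta$ compares the trivial fibration $\mathrm{pr}_1:SU(p+t-1)\times\Omega F\to SU(p+t-1)$ with the fibration $\Omega i$ and covers the identity of the base.

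The second step is to identify the induced map on fibres. Because $s$ is basepoint-preserving, restricting $\theta$ to the fibre $\{\ast\}\times\Omega F$ over the basepoint gives $y\mapsto s(\ast)\cdot\Omega\nu(y)=\Omega\nu(y)$, which is exactly the fibre inclusion of (\ref{fib1}); hence on fibres $\theta$ is the identity of $\Omega F$. Applying the five lemma to the naturality square relating the two long exact homotopy sequences then shows that $\theta_\ast$ is an isomorphism on all homotopy groups, and since all the spaces involved are simply connected of the homotopy type of CW-complexes, Whitehead's theorem yields that $\theta$ is a homotopy equivalence, establishing $\Omega\Sigma A\simeq SU(p+t-1)\times\Omega F$.

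I expect the only genuine obstacle to lie in the input I am borrowing rather than in the argument above: namely the existence of the basepoint-preserving right homotopy inverse $s$ of $\Omega i$, which rests on the Cohen--Neisendorfer retractions assembled in Theorem \ref{CN} and Lemma \ref{same} and on the identification of the projection onto the $SU(p+t-1)$-factor with $\Omega i$. Granting that section, the splitting is formal; the only points requiring care are that $\Omega i$ is genuinely an H-map, which is immediate since it is a looped map, and that $s$ may be taken basepoint-preserving so that the fibre computation in the second step is clean.
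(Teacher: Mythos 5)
Your proposal is correct and is essentially the paper's argument: the paper simply asserts the splitting as an immediate consequence of the right homotopy inverse to $\Omega i$ furnished by Cohen--Neisendorfer, and your map $\theta=\mu\circ(s\times\Omega\nu)$ together with the five lemma and Whitehead's theorem is the standard way to make that assertion precise. No gaps; the only point you rightly flag, the existence of the section $s$, is exactly the input the paper records just before the lemma.
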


We then get the following corollary.

\begin{cor}
	$\Omega F$ and $\prod_{k=2}^{p} \Omega R_k \times \Omega Q$ have the same homotopy type.
\end{cor}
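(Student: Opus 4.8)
The plan is to realise both $\Omega F$ and $\prod_{k=2}^{p}\Omega R_k\times\Omega Q$ as the homotopy fibre of a single map $\Omega\Sigma A\to SU(p+t-1)$, and then to identify that map with $\Omega i$. Since the homotopy fibre is a homotopy invariant of a map, producing a homotopy between two such projections immediately yields the asserted equivalence, and this bypasses any appeal to cancellation of the common factor $SU(p+t-1)$ appearing in Lemmas \ref{same} and \ref{decomp}.

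First I would assemble the projection coming from Lemma \ref{same}. Writing $g:\Sigma A=\bigvee_{k}\Sigma A_k\to\prod_{k}\Sigma A_k$ for the inclusion of the wedge into the product, Theorem \ref{HM} identifies $\Omega g$ with the projection of $\Omega\Sigma A$ onto $\prod_k\Omega\Sigma A_k$ whose homotopy fibre is $\Omega Q$, while Theorem \ref{CN} gives the projections $\iota_k\colon\Omega\Sigma A_k\to S_k$ with homotopy fibre $\Omega R_k$. Setting
\[
\pi\colon\Omega\Sigma A\xrightarrow{\Omega g}\prod_{k=2}^{p}\Omega\Sigma A_k\xrightarrow{\prod_k\iota_k}\prod_{k=2}^{p}S_k=SU(p+t-1),
\]
the product decomposition underlying Lemma \ref{same} exhibits $\pi$ as the projection of $\Omega\Sigma A\simeq SU(p+t-1)\times\prod_k\Omega R_k\times\Omega Q$ onto its first factor, so the homotopy fibre of $\pi$ is $\prod_{k=2}^{p}\Omega R_k\times\Omega Q$. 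On the other hand, looping the defining fibration of $F$ shows that $\Omega F$ is the homotopy fibre of $\Omega i$. Thus everything reduces to the claim $\pi\simeq\Omega i$.

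To prove this I would use that both $\pi$ and $\Omega i$ are loop maps into the loop space $SU(p+t-1)=\Omega BSU(p+t-1)$: the map $\Omega i$ is a loop map by construction, and $\pi$ is a composite of the loop maps $\Omega g$ and $\prod_k\iota_k$. A loop map out of $\Omega\Sigma A$ into a loop space is determined up to homotopy by its restriction along the unit $E\colon A\to\Omega\Sigma A$, since such maps correspond under the James adjunction to maps $A\to SU(p+t-1)$. By the excerpt, $\Omega i\circ E\simeq\overline{i}$. Restricting $\pi\circ E$ to a wedge summand $A_k\subseteq A$, naturality of $E$ identifies $\Omega g\circ E|_{A_k}$ with the map that projects as $E$ to the $k$-th factor $\Omega\Sigma A_k$ and trivially to the others, and $\iota_k\circ E$ is the inclusion $A_k\hookrightarrow S_k$ furnished by Theorem \ref{CN}; hence $\pi\circ E$ is the inclusion $\bigvee_k A_k\hookrightarrow\prod_k S_k=SU(p+t-1)$. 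The main obstacle is precisely the last identification: one must check that $\overline{i}$ restricted to $A_k$ agrees, up to homotopy and not merely on homology, with the inclusion of $A_k$ into the $k$-th factor $S_k$ of Oka's decomposition, which is where the construction of $\overline{i}$ from the Oka splitting must be invoked carefully. Granting it, $\pi\circ E\simeq\Omega i\circ E$, so $\pi\simeq\Omega i$ and the two homotopy fibres coincide, giving $\Omega F\simeq\prod_{k=2}^{p}\Omega R_k\times\Omega Q$.
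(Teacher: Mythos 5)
Your argument is correct, and it takes a genuinely different route from the paper. The paper obtains the corollary simply by juxtaposing Lemma \ref{same} and Lemma \ref{decomp} --- two product decompositions of $\Omega\Sigma A$ sharing the factor $SU(p+t-1)$ --- and the homotopy equivalence is only really cemented later, in the proof of Theorem \ref{a}, where an explicit map $g:\Omega Q\times\prod_{k=2}^{p}\Omega R_k\to\Omega F$ is produced as a lift of $\rho$ through $\Omega\nu$ and shown to be a homology isomorphism (a monomorphism between graded vector spaces with equal Euler--Poincar\'e series), hence an equivalence by Whitehead's theorem. Your route instead realises both spaces as the homotopy fibre of a single map: you identify $\Omega i$ with $\pi=(\prod_k\iota_k)\circ\Omega g$ via the uniqueness clause of James's theorem and read off the fibre of $\pi$ from the product decomposition. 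This neatly sidesteps the (in general invalid) cancellation of a common factor, and it is rigorous once two small points are tightened. First, $\prod_k\iota_k$ is an H-map by Theorem \ref{CN} but not obviously a loop map; fortunately James's uniqueness only requires H-maps into the homotopy associative H-space $SU(p+t-1)$, so your appeal to ``loop maps'' should be weakened to ``H-maps'' and nothing is lost. Second, the identification $\iota_k\circ E\simeq(A_k\hookrightarrow S_k)$ --- the point you rightly flag as the main obstacle --- holds because the Cohen--Neisendorfer construction produces $\iota_k$ precisely as the canonical multiplicative extension of the chosen map $A_k\to S_k$ inducing the inclusion of generators, and the paper takes that map to be the same skeletal inclusion $A_k\hookrightarrow B_k$ (or the identity of $S^{2k-1}$) used to assemble $\overline{i}$; so $\pi\circ E$ and $\Omega i\circ E$ are equal as maps, not merely homologous. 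As for what each approach buys: yours delivers the abstract equivalence cleanly and self-containedly, but the paper's explicit equivalence $g$ is needed anyway for Theorem \ref{a}, which identifies $\Omega\nu$ restricted to each factor --- information your fibre identification does not by itself supply.
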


Theorems \ref{HM} and \ref{CN} also state that the factors $\Omega R_k$, $2\leq k\leq p$ and $\Omega Q$ map to $\Omega\Sigma A$ through looped Whitehead products. 
In what follows we wish to show to what extent these factors lift through the map $\Omega\nu$ in the fibration
\begin{equation*}
	\Omega F\xrightarrow{\Omega\nu} \Omega\Sigma A\xrightarrow{\Omega i} SU(p+t-1)
\end{equation*}
and amend them to produce lifts when obstructions exist.
This first requires some notation and preliminary results.

A space $H$ is an \emph{H-group} if it is an H-space whose multiplication also has a homotopy inverse.
Let $H$ be an H-group, and let $c:H\times H\rightarrow H$ be the commutator.
Pointwise, $c$ is defined by $c(a,b)=aba^{-1}b^{-1}$.
We can then iterate this and get for $k\geq 1$ the ``$k$-fold commutator map'', $c_k:H^{k+1}\rightarrow H$ defined by
\begin{equation*}
c_k: c\circ(1\times c)\circ...\circ(1\times1\times...\times1\times c).
\end{equation*}

\begin{dfn}
    An H-group $H$ has homotopy nilpotence class $k$, denoted \mbox{nil$(H)=k$}, if and only if $c_k$ is null homotopic and $c_{k-1}$ is not.
\end{dfn}
If an H-group $H$ has homotopy nilpotence class $k$, then any length \mbox{$k+1$} Samelson products in $H$ are trivial.
We can now state a theorem of Kishimoto \cite{MR2506127}.
\begin{thm}[Kishimoto]\label{kish}
	Let $p$ be a prime greater than 5.
	Then
	\begin{enumerate}
	\item nil$(SU(n))=3$ if $p=n+1$ or $\frac{n}{2}< p\leq\frac{2n+1}{3}$ and
	\item nil$(SU(n))=2$ if $\frac{2n+1}{3}<p\leq n-2$.
	\end{enumerate}
	\qed
\end{thm}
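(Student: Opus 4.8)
The plan is to locate $\nil(SU(n))$ between two bounds, both phrased through iterated Samelson products of the generators $i_k$. The structural first step is to reduce the vanishing of a commutator map $c_\ell$ to the vanishing of length-$(\ell+1)$ Samelson products of the $i_k$. Because $c_\ell$ is null as soon as one of its $\ell+1$ arguments is the basepoint, it factors through the smash $SU(n)^{\wedge(\ell+1)}$; and since $SU(n)$ is a finite complex whose cells are smash products of the generating cells, one may test $c_\ell\simeq\ast$ cell by cell. On the bottom cell of a factor $S^{2a_1-1}\wedge\cdots\wedge S^{2a_{\ell+1}-1}$ the restriction of $c_\ell$ is, up to sign and lower brackets, the iterated Samelson product $\langle i_{a_1},\langle i_{a_2},\ldots,i_{a_{\ell+1}}\rangle\ldots\rangle$, the higher cells being absorbed inductively using the Jacobi identity. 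Hence $\nil(SU(n))\le\ell$ exactly when all such length-$(\ell+1)$ products vanish, and $\nil(SU(n))\ge\ell$ exactly when some length-$\ell$ product is essential.

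For the upper bounds I would feed in the factorizations already established. By the discussion preceding Lemma \ref{genlem2}, a length-three product $\langle i_j,\langle i_k,i_l\rangle\rangle$ is essential only when $j+k+l\in\{2p+1,2p+2,2p+3,3p\}$, and by Corollary \ref{sam3} each such product compresses into a very low-dimensional sphere $S^{2m-1}\hookrightarrow SU(n)$. Bracketing once more with a fourth generator $i_a$ therefore reduces a length-four product to a length-two product $\langle i_a,i_m\rangle$ with $m$ small; by Propositions \ref{sama}, \ref{samb} and Lemma \ref{samb2} this vanishes whenever the combined index fails the threshold $a+m\ge p+2$, and one checks that in the stated range of $n$ no admissible four-fold configuration survives, giving $c_3\simeq\ast$ and $\nil(SU(n))\le 3$. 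For the sharper bound one repeats the argument one length earlier: when $\tfrac{2n+1}{3}<p$ the cell range $[3,2n-1]$ of $SU(n)$ is too short for any triple of generators to realise the window $\{2p+1,2p+2,2p+3,3p\}$ while keeping the inner compressed product $\langle i_k,i_l\rangle$ within a valid index, so every length-three product already vanishes, giving $c_2\simeq\ast$ and $\nil(SU(n))\le 2$.

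For the lower bounds I would exhibit an explicit essential product, detected by the stable class $\alpha_1$ of order $p$ and the linking operation $\p(x_{2(k+p)-3})=x_{2k-1}$ of the Oka decomposition. Proposition \ref{sama} already furnishes, in every case in range, a length-two product $\langle i_k,i_j\rangle$ of order exactly $p$, so $\nil(SU(n))\ge 2$ throughout. For the cases claiming $\nil(SU(n))=3$, namely $p\le\tfrac{2n+1}{3}$ together with the $p$-regular endpoint $p=n+1$ where $SU(p-1)\simeq_{(p)}S^3\times\cdots\times S^{2p-3}$, the complex is now long enough to support a triple $(j,k,l)$ with $j+k+l$ in the essentiality window; assembling $\langle i_j,\langle i_k,i_l\rangle\rangle$ from an essential inner product and showing the outer bracket remains of order $p$ produces a non-trivial length-three Samelson product, whence $\nil(SU(n))\ge 3$. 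Combining with the upper bounds pins the value exactly.

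The main obstacle is the lower bound: proving that the assembled triple product is genuinely non-zero rather than collapsing for formal reasons. The upper-bound vanishing statements are, in the end, bookkeeping with the factorizations of Propositions \ref{sama}, \ref{samb}, Lemmas \ref{samb2} and \ref{samfacb} and Corollary \ref{sam3} against the degree formula; but essentiality admits no such soft argument and must be witnessed by a concrete invariant. The delicate point is to arrange the inner product $\langle i_k,i_l\rangle$, which already lands in a $p$-torsion group, so that bracketing once more with $i_j$ does not annihilate it — equivalently, that the iterated $\p$-linkage in the quasi-regular blocks $B_k$ survives to detect the full length-three product — and it is precisely here that the numerical hypotheses relating $n$ and $p$ are indispensable.
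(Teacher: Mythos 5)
This theorem is quoted from Kishimoto \cite{MR2506127} and the paper supplies no proof of its own, so your proposal can only be judged on its internal merits; there it has two genuine gaps. First, the reduction. You claim $c_\ell\simeq\ast$ can be tested ``cell by cell'' on $SU(n)^{\wedge(\ell+1)}$, with the bottom cells giving iterated Samelson products of the $i_k$ and ``the higher cells being absorbed inductively using the Jacobi identity.'' That is not how the reduction works: a null-homotopy on a skeleton does not propagate over higher cells by a Lie-algebra identity; at each stage there is an extension obstruction living in a homotopy group of $SU(n)$, and these are exactly the groups one would have to control. The correct mechanism --- the one this paper uses everywhere else --- is that $\Omega i:\Omega\Sigma A\to SU(n)$ has a right homotopy inverse, so the commutator map of $SU(n)$ is a retract of $(\Omega i)$ composed with the commutator map of $\Omega\Sigma A$, and by Theorem \ref{James} the latter composite is an H-map determined by its restriction to $A^{\wedge(\ell+1)}$, i.e.\ by the iterated Samelson products $\langle i_{a_1},\langle i_{a_2},\ldots\rangle\rangle$. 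Without this your upper bounds do not follow from the vanishing statements you cite.

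Second, and more seriously, the lower bounds are not established. To get $\nil(SU(n))=3$ in case (1) you must exhibit an \emph{essential} length-three Samelson product, and you explicitly defer this (``the main obstacle\dots'') without supplying the detecting invariant; an existence claim that ``the complex is now long enough to support a triple'' is not a nonvanishing proof. Moreover the only statement in the paper about which length-three products can be nontrivial --- the window $j+k+l\in\{2p+1,2p+2,2p+3,3p\}$ --- is itself imported from Kishimoto, so leaning on it here is circular: the Samelson-product computations and the nilpotency statement are the same theorem seen from two sides. Finally, the case $p=n+1$ concerns the $p$-regular space $SU(p-1)$, which lies outside the quasi-$p$-regular range $p+1\le n\le 2p-1$ to which Propositions \ref{sama} and \ref{samb} and Lemma \ref{samb2} apply, so those results cannot be invoked there at all; that case needs the separate $p$-regular analysis (as in Theriault's work \cite{T}), which your sketch does not provide.
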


By Theorem \ref{kish} we also know the following.

\begin{lem}\label{nulllength4}
	Let $\omega_k$ be a length $k$ Whitehead product. Then
	\begin{enumerate}
		\item if $p=n+1$ or $\frac{n}{2}< p\leq\frac{2n+1}{3}$, $(\Omega 	i)\circ(\Omega\omega_k)$ is trivial for all $k\geq 4$,
		\item if $\frac{2n+1}{3}<p\leq n-2$, $(\Omega i)\circ(\Omega\omega_k)$ is trivial for all $k\geq 4$.
	\end{enumerate}
\end{lem}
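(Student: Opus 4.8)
The plan is to convert the statement about looped Whitehead products into a statement about Samelson products in $SU(p+t-1)$, and then to apply Kishimoto's homotopy nilpotency bound (Theorem \ref{kish}). The bridge between the two is that $\Omega i$ is a loop map, hence an H-map, and H-maps carry looped Whitehead products to Samelson products.

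\emph{Step 1: translate to Samelson products.} A length $k$ Whitehead product $\omega_k$ is an iterated Whitehead bracket of $k$ maps of the form $s_{r_j}:\Sigma A_{r_j}\to \Sigma A$. Since $\Omega i$ is a loop map, the standard James--Samelson relation gives that $(\Omega i)\circ(\Omega\omega_k)$ is homotopic to the corresponding length $k$ iterated Samelson product $\langle i_{r_1},\langle i_{r_2},\ldots,\langle i_{r_{k-1}},i_{r_k}\rangle\cdots\rangle\rangle$ of the maps $i_{r_j}:A_{r_j}\to SU(p+t-1)$, carrying the same bracketing as $\omega_k$. Thus it suffices to show that every Samelson product of length $k\geq 4$ in $SU(p+t-1)$ is null homotopic.

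\emph{Step 2: apply the nilpotency bound.} Writing $n=p+t-1$, the two clauses of the hypothesis mirror exactly the two clauses of Theorem \ref{kish}, which give $\nil(SU(n))=3$ in the first case and $\nil(SU(n))=2$ in the second; in either case $\nil(SU(n))\leq 3$. Equivalently the $3$-fold commutator map $c_3:SU(n)^4\to SU(n)$ is null homotopic, so every length $4$ Samelson product vanishes.

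\emph{Step 3: higher lengths.} To reach all $k\geq 4$ rather than just $k=4$, observe that the commutator maps satisfy the recursion $c_m\simeq c\circ(1\times c_{m-1})$; since $c(a,\ast)\simeq\ast$, once $c_3\simeq\ast$ an induction on $m$ gives $c_m\simeq\ast$ for every $m\geq 3$. Hence all iterated commutators of length $\geq 4$, and therefore all Samelson products of length $\geq 4$ regardless of bracketing, are trivial. Combining this with Step 1 yields $(\Omega i)\circ(\Omega\omega_k)\simeq\ast$ for all $k\geq 4$, proving both parts of the lemma simultaneously. The one genuinely delicate point is Step 1: making the passage from a looped \emph{Whitehead} product to the corresponding \emph{Samelson} product fully precise, including tracking the bracketing and checking that the smash and adjunction identifications are compatible with the decompositions of Theorems \ref{HM} and \ref{CN}. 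The nilpotency input of Steps 2--3 is then essentially formal once Theorem \ref{kish} is in hand.
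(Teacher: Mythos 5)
Your proposal is correct and follows essentially the same route as the paper: the paper derives this lemma directly from Theorem \ref{kish} by observing (as spelled out in the proof of Lemma \ref{newlem} and again in the proof of the proposition on $\Omega\degppp-p^3$) that $(\Omega i)\circ(\Omega\omega_k)$ is determined via Theorem \ref{James} by its restriction along $E$, which is a length $k$ Samelson product in $SU(p+t-1)$, and these vanish for $k\geq 4$ since $\nil(SU(n))\leq 3$ in every listed case. Your Step 3 on iterated commutators is a slightly more explicit justification of the same nilpotency input.
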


Now consider lifts of certain Samelson products through the map $\Omega\nu$ in the fibration 
\begin{equation*}
\Omega F\xrightarrow{\Omega \nu} \Omega \Sigma A \xrightarrow{\Omega i} SU(p+t-1).
\end{equation*}

\begin{lem}\label{newlem}
	Let $j\geq 4$ and $\theta_k: A_k\hookrightarrow A \xrightarrow{E} \Omega\Sigma A$ where $E$ is the suspension map. Then the iterated Samelson product $\langle\theta_{k_1},\langle\theta_{k_2},\ldots\langle\theta_{k_{j-1}},\theta_{k_j}\rangle\ldots\rangle$ lifts through the map $\Omega\nu$.
\end{lem}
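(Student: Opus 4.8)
The plan is to reduce the statement to a triviality result about length-$j$ brackets that has already been recorded. Write
\[
g \;=\; \langle\theta_{k_1},\langle\theta_{k_2},\ldots\langle\theta_{k_{j-1}},\theta_{k_j}\rangle\ldots\rangle\rangle \;:\; A_{k_1}\wedge\cdots\wedge A_{k_j}\longrightarrow \Omega\Sigma A
\]
for the iterated Samelson product in question. In the homotopy fibration
\[
\Omega F\xrightarrow{\Omega\nu}\Omega\Sigma A\xrightarrow{\Omega i} SU(p+t-1),
\]
a map into the total space $\Omega\Sigma A$ lifts through the fibre inclusion $\Omega\nu$ if and only if its projection to the base is null homotopic. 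Hence the whole proof reduces to establishing that $\Omega i\circ g\simeq\ast$.

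To compute $\Omega i\circ g$, first identify $g$ with a looped Whitehead product. Since $\theta_k$ is the restriction to $A_k$ of the suspension $E\colon A\to\Omega\Sigma A$, it is the adjoint of the inclusion $s_k\colon\Sigma A_k\to\Sigma A$ of the $k$-th wedge summand appearing in Theorem \ref{HM}. By the classical adjoint correspondence between Samelson and Whitehead products (extended to iterated brackets exactly as in the Hilton--Milnor description of the factor maps in Theorem \ref{HM}), $g$ is homotopic to the composite
\[
A_{k_1}\wedge\cdots\wedge A_{k_j}\xrightarrow{\,E\,}\Omega\Sigma\!\left(A_{k_1}\wedge\cdots\wedge A_{k_j}\right)\xrightarrow{\,\Omega\omega_j\,}\Omega\Sigma A,
\]
where $\omega_j$ is the length-$j$ iterated Whitehead product of the maps $s_{k_i}$ with the matching bracketing. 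Consequently $\Omega i\circ g\simeq(\Omega i\circ\Omega\omega_j)\circ E$, and since $j\geq 4$, Lemma \ref{nulllength4} gives $\Omega i\circ\Omega\omega_j\simeq\ast$. Therefore $\Omega i\circ g\simeq\ast$, and the lift through $\Omega\nu$ exists.

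The main obstacle is the identification in the second step: one must match the bracketing (and track the signs) so that the Whitehead product produced by the adjunction is precisely the length-$j$ product $\omega_j$ to which Lemma \ref{nulllength4} applies. This can be sidestepped entirely at the cost of a slightly different bookkeeping: since $\Omega i$ is a loop map, it is a homomorphism of H-groups up to homotopy and therefore carries Samelson products to Samelson products, so
\[
\Omega i\circ g\;\simeq\;\langle i_{k_1},\langle i_{k_2},\ldots\langle i_{k_{j-1}},i_{k_j}\rangle\ldots\rangle\rangle,
\]
using $\Omega i\circ\theta_k\simeq i_k$ (the composite $A\xrightarrow{E}\Omega\Sigma A\xrightarrow{\Omega i}SU(p+t-1)$ is $\overline{i}$). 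The right-hand side is a length-$j$ Samelson product in $SU(p+t-1)$, which is null homotopic for $j\geq 4$ because $\mathrm{nil}(SU(p+t-1))\leq 3$ by Theorem \ref{kish} and the remark that an H-group of nilpotence class $k$ has trivial length-$(k+1)$ Samelson products. Either route yields $\Omega i\circ g\simeq\ast$; the loop-map argument is the most economical and is the one I would write up.
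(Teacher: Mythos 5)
Your second (``loop-map'') argument is exactly the paper's proof: reduce to showing $\Omega i$ kills the bracket, use that $\Omega i$ is an H-map to push it inside the Samelson product, identify $\Omega i\circ\theta_k$ with $i_k$, and invoke Theorem \ref{kish} to kill length-$\geq 4$ Samelson products in $SU(p+t-1)$. The first route via Whitehead products is an unnecessary detour, but since you correctly discard it in favour of the direct argument, the proposal matches the paper.
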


\begin{proof}
We will show that $\Omega i\circ \langle\theta_{k_1},\langle\theta_{k_2},\ldots,\langle\theta_{k_{j-1}},\theta_{k_j}\rangle\ldots\rangle\rangle$ is trivial.
Since $\Omega i$ is an H-map, $\Omega i\circ \langle\theta_{k_1},\langle\theta_{k_2},\ldots,\langle\theta_{k_{j-1}},\theta_{k_j}\rangle\ldots\rangle\rangle$ is homotopic to $\langle\Omega i\circ\theta_{k_1},\langle\Omega i\circ\theta_{k_2},\ldots,\langle\Omega i\circ\theta_{k_{j-1}},\Omega i\circ\theta_{k_j}\rangle\ldots\rangle\rangle$.
As $\Omega i\circ \theta_k$ is the inclusion of $A_k$ into $SU(p+t-1)$, it is precisely the map $i_k$ of the previous section.
Therefore, by Theorem \ref{kish} all Samelson products of length $4$ or more compose trivially into $SU(p+t-1)$, and so lift to $\Omega F$.
\end{proof}

We must now deal with the Samelson products of length $2$ and $3$.

First we deal with the length $2$ case.
Since $\Omega i$ is an H-map, $\Omega i\circ \langle\theta_k,\theta_j\rangle$ is homotopic to $\langle\Omega i\circ\theta_k,\Omega i\circ\theta_j\rangle$.
Therefore if $\langle i_k,i_j\rangle$ is trivial then $\langle\theta_k,\theta_j\rangle$ lifts to $\Omega F$.
We know from Lemma \ref{sama} that if $k+j\leq p+1$ then $\langle i_k,i_j\rangle$ is trivial.
For $p+2\leq k+j\leq 2p$ however, $\langle i_k,i_j\rangle$ may not be trivial.
In the case that it is not trivial, by Lemmas \ref{sama} and \ref{samfacb} it is homotopic to a composite 
\begin{equation*}
	\alpha_{k,j}:A_k\wedge A_j\xrightarrow{f} S^{2(k+j-p+1)-1}\rightarrow SU(p+t-1)	
\end{equation*}
unless $t=p$. 

Let $a_{k,j}$ be the composite
\begin{equation*}
	a_{k,j}:A_k\wedge A_j\xrightarrow{f} S^{2(k+j-p+1)-1}\hookrightarrow A \xrightarrow{E} \Omega\Sigma A,
\end{equation*}
then $\alpha_{k,j}\simeq (\Omega i)\circ a_{k,j}$.
Therefore the difference $\overline{\eta}_{k,j}=\langle \theta_k,\theta_j\rangle-a_{k,j}$ composes trivially with $\Omega i$ and so lifts to $\Omega F$,
\begin{equation*}
	\xymatrix{ & A_k\wedge A_j \ar[d]^-{\overline{\eta}_{k,j}} \ar[dl]_-{\mu_{k,j}}  & \\
    	        \Omega F \ar[r]^-{\Omega\nu} & \Omega \Sigma A \ar[r]^-{\Omega i} & SU(p+t-1) }.
\end{equation*}
for some map $\mu_{k,j}$.

Using Lemma \ref{sam3} we can define similar maps, 
\begin{equation*}
	a_{k,j,l}:A_k\wedge A_j\wedge A_l\rightarrow S^{2m-1}\hookrightarrow A \xrightarrow{E} \Omega\Sigma A
\end{equation*}
for non-trivial length $3$ Samelson products with the property that 
\begin{equation*}
	\langle i_j,\langle i_k,i_l\rangle\rangle\simeq (\Omega i)\circ a_{k,j,l}.
\end{equation*}
This gives us differences $\overline{\eta}_{j,k,l}=\langle i_j,\langle i_k,i_l\rangle\rangle - a_{k,j,l}$ which lift through $\Omega\nu$ to maps $A_k\wedge A_j\wedge A_l\xrightarrow{\mu_{k,j,l}}\Omega F$. 

\begin{lem}\label{trivimage}
	The image in homology of the maps $\overline{\eta}_{k,j}$ and $\overline{\eta}_{k,j,l}$ is equal to the image in homology of the maps $\langle i_k,i_j\rangle$ and $\langle i_k,\langle i_j,i_l\rangle\rangle$ respectively.
\end{lem}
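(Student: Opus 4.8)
The plan is to reduce the whole statement to a single observation: the correction terms $a_{k,j}$ and $a_{k,j,l}$ induce the zero map on reduced homology. Granting this, the lemma is immediate, because $\overline{\eta}_{k,j}$ is the loop product $\langle\theta_k,\theta_j\rangle\cdot(a_{k,j})^{-1}$ in the H-space $\Omega\Sigma A$, and the homology image of such a product is unchanged when one factor is null on reduced homology. Concretely, writing the induced map as $\mu_{*}\circ\bigl((\langle\theta_k,\theta_j\rangle)_{*}\otimes(a_{k,j})^{-1}_{*}\bigr)\circ\Delta_{*}$ and using that $(a_{k,j})^{-1}_{*}$ coincides with the augmentation on positive-degree classes, every term of the coproduct expansion that feeds a reduced class into the second tensor slot vanishes, leaving $(\overline{\eta}_{k,j})_{*}=(\langle\theta_k,\theta_j\rangle)_{*}$. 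Under the Bott--Samelson isomorphism $H_{*}(\Omega\Sigma A)\cong T(\tilde{H}_{*}(A))$ the right-hand side is the span of the graded commutators produced by the Samelson product, which is exactly the homology image of $\langle i_k,i_j\rangle$.

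It remains to verify the vanishing of the corrections, and this is a pure dimension count. Each $A_k$ has cells only in dimensions $2k-1$ and, when $k\le t$, $2(k+p)-3$, so the bottom cell of $A_k\wedge A_j$ lies in dimension $2(k+j)-2$. By construction $a_{k,j}$ factors as $A_k\wedge A_j\xrightarrow{f}S^{2(k+j-p)+1}\hookrightarrow A\xrightarrow{E}\Omega\Sigma A$ (through $S^{3}$ in the exceptional case $k+j=2p$, by Lemma \ref{samfacb}). Since $2(k+j-p)+1<2(k+j)-2$ for every prime $p>5$, the target sphere sits strictly below the bottom cell of the domain; hence $f_{*}=0$ for degree reasons and therefore $(a_{k,j})_{*}=0$, irrespective of the inclusion and suspension that follow. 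The identical argument handles the length-three case: by Corollary \ref{sam3} the map $a_{k,j,l}$ factors through a sphere of dimension at most $2(k+j+l-2p)-5$ (through $S^{3}$ when $k+j+l=3p$), which lies far below the bottom cell $2(k+j+l)-3$ of $A_k\wedge A_j\wedge A_l$, so again the factoring map is zero on reduced homology and $(a_{k,j,l})_{*}=0$. Feeding this into the product computation above yields $(\overline{\eta}_{k,j,l})_{*}=(\langle\theta_k,\langle\theta_j,\theta_l\rangle\rangle)_{*}$, the image of $\langle i_k,\langle i_j,i_l\rangle\rangle$.

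The only place requiring care is the length-three dimension bookkeeping. One must confirm, case by case over $k+j+l\in\{2p+1,2p+2,2p+3,3p\}$, that the sphere delivered by Corollary \ref{sam3} genuinely lies below the bottom cell of the triple smash; in the arithmetically smallest cases the nominal sphere dimension is non-positive, which simply means the relevant Samelson product, and hence the correction, is already null, so the claim holds trivially. Because the inclusion $S^{2m-1}\hookrightarrow A$ and the suspension $E$ are degree-preserving and injective on homology, no class can be created in the bottom dimension after the factorization, so the entire argument collapses to these elementary sphere-dimension inequalities, all of which hold comfortably for $p>5$.
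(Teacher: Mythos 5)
Your proposal is correct and follows essentially the same route as the paper: reduce the lemma to showing that the correction terms $a_{k,j}$ and $a_{k,j,l}$ induce the zero map in homology, and then establish this by the dimension count that the target sphere lies strictly below the bottom cell of the relevant smash product. The extra detail you supply (the coproduct computation justifying that subtracting a homologically trivial map does not change the image, and the case-by-case check in the length-three setting) is left implicit in the paper but is consistent with its argument.
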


\begin{proof}
We will show that $a_{k,j}$ and $a_{k,j,l}$ induce the zero map in homology.
Each $a_{k,j}$ and $a_{k,j,l}$ factors through some sphere $S^{2m-1}$ where $2\leq m\leq p$. So it suffices to show that the maps $A_k\wedge A_j\rightarrow S^{2m-1}$ and $A_k\wedge A_j\wedge A_l\rightarrow S^{2m-1}$ are zero in homology.
As each $A_k\wedge A_j$ and $A_k\wedge A_j\wedge A_l$ has cells in dimensions $\geq 2p+2$ or $\geq 4p$ respectively, this is clear for dimensional reasons.
\end{proof}

One result we will make repeated use of is a theorem of James \cite{MR0073181}.
We state it here to aid the reader.
\begin{thm}[James]\label{James}
	Let $f:X\rightarrow Y$ be some map where $Y$ is a homotopy associative $H$-space. 
	Then $f$ extends to an H-map $g:\Omega\Sigma X \rightarrow Y$ where $g$ is the unique H-map such that $g\circ E\simeq f$ where $E$ is the suspension functor.
	\qed
\end{thm}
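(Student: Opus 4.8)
The plan is to exploit the James reduced product construction, which furnishes a concrete model for $\Omega\Sigma X$ when $X$ is a connected CW-complex. Recall that $J(X)=\coprod_{n\geq 0}X^{n}/\!\sim$ is the free topological monoid generated by $X$, where a word $x_1\cdots x_n$ is identified with the shorter word obtained by deleting any letter equal to the basepoint, and multiplication is concatenation of words. There is a classical homotopy equivalence $J(X)\simeq\Omega\Sigma X$ under which the inclusion $X=J_1(X)\hookrightarrow J(X)$ of length-one words corresponds to the suspension map $E$. Working with $J(X)$ in place of $\Omega\Sigma X$, the statement becomes the assertion that $J(X)$ is the free object on $X$ in the appropriate homotopical sense, so that a based map $f\colon X\to Y$ into a homotopy-associative $H$-space extends uniquely, up to homotopy, to a multiplicative map.

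First I would construct $g$ by iterated multiplication. Writing $\mu$ for the multiplication on $Y$, define for each $n$ a map $g_n\colon X^{n}\to Y$ as the composite
\begin{equation*}
	X^{n}\xrightarrow{f^{n}}Y^{n}\xrightarrow{\mu_n}Y,
\end{equation*}
where $\mu_n$ is an $n$-fold iterate of $\mu$ with a fixed choice of bracketing. Because every based map preserves basepoints and the basepoint of $Y$ is a homotopy unit, each $g_n$ is, up to homotopy, compatible with the basepoint-collapsing identifications defining $J(X)$, so that it descends to the filtration quotient $J_n(X)$. The collection $\{g_n\}$ then assembles into a single map $g\colon J(X)\to Y$ on the colimit $J(X)=\operatorname{colim}_n J_n(X)$.

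Next I would verify the three required properties. That $g$ is an $H$-map follows because concatenation of words is sent to the product in $Y$: on $J_m(X)\times J_n(X)\to J_{m+n}(X)$ the map $g$ agrees, up to the homotopy supplied by homotopy associativity, with $\mu\circ(g\times g)$. That $g\circ E\simeq f$ is immediate, since $E$ is the inclusion of length-one words and $g_1=f$. For uniqueness, suppose $h\colon\Omega\Sigma X\to Y$ is any $H$-map with $h\circ E\simeq f$. Then $h$ agrees with $g$ on $J_1(X)=X$, and since both are $H$-maps they agree on products; the free-monoid structure of $J(X)$ shows by induction over the filtration that $h\simeq g$ on every $J_n(X)$, hence on all of $J(X)$.

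The main obstacle is not the formal skeleton above but the coherence of the inductive extension: since $Y$ is only homotopy-associative rather than strictly associative, the iterated multiplication $\mu_n$ is well-defined only up to a nonempty space of choices, and one must check that the homotopies used to pass between bracketings, and to collapse basepoint letters, can be chosen compatibly as $n$ grows, so that the maps $g_n$ genuinely glue over the James filtration. Arranging these choices coherently---equivalently, verifying that homotopy associativity alone suffices to carry the construction through the colimit without invoking higher coherence data---is the technical heart of the argument, and it is exactly this point that restricts the hypothesis to homotopy-associative $H$-spaces.
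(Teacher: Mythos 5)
The paper offers no proof of this statement: it is quoted from James \cite{MR0073181} with the proof omitted, so there is nothing internal to compare against. Your outline follows the standard route---model $\Omega\Sigma X$ by the James reduced product $J(X)$ and extend $f$ multiplicatively over the word-length filtration---which is indeed how the result is proved. But as written the argument has a genuine gap, and you say so yourself: you explicitly defer ``the technical heart.'' Concretely, the maps $g_n=\mu_n\circ f^{\times n}$ do not descend to $J_n(X)$ on the nose, because the basepoint of $Y$ is only a homotopy unit and the various bracketings of $\mu_n$ are only homotopic, not equal; maps that respect the defining identifications merely up to homotopy do not ``assemble into a single map on the colimit.'' The standard repair is to induct over the James filtration, using that each inclusion $J_{n-1}(X)\hookrightarrow J_n(X)$ is a cofibration with cofibre the smash $X^{(n)}$, and to invoke the homotopy extension property together with homotopy associativity to deform $g_n$ rel $J_{n-1}(X)$ at each stage (after first replacing $Y$ by an equivalent H-space with strict unit, which is possible when the basepoint is nondegenerate). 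Without carrying out this induction the map $g$ has not actually been constructed, and it is precisely here that homotopy associativity is consumed.

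The uniqueness argument has a parallel defect. Knowing that $h$ and $g$ are H-maps agreeing on $J_1(X)=X$ tells you that their pullbacks to $X^{\times n}$ are homotopic, but the quotient map $X^{\times n}\to J_n(X)$ does not detect homotopy classes of maps out of $J_n(X)$: two maps can become homotopic after precomposition with a surjection without being homotopic themselves. One must again climb the filtration, extending the homotopy $h|_{J_{n-1}(X)}\simeq g|_{J_{n-1}(X)}$ over $J_n(X)$ by means of the cofibration sequence $J_{n-1}(X)\to J_n(X)\to X^{(n)}$ and the H-map hypotheses. So your skeleton is the correct one and correctly identifies where the difficulty lies, but the coherence induction you set aside is the actual content of James's theorem rather than a routine verification, and the proof is incomplete without it.
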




Using Theorem \ref{James} we get H-maps 
\begin{align*}
	\eta_{k,j}   &: \Omega\Sigma (A_k\wedge A_j) \rightarrow \Omega \Sigma A, \\
	\eta_{k,j,l} &: \Omega\Sigma (A_k\wedge A_j\wedge A_l) \rightarrow \Omega \Sigma A,
\end{align*}
extending $\overline{\eta}_{k,j}$ and $\overline{\eta}_{k,j,l}$ respectively.
These maps are then the loops on Whitehead products minus some correcting factor.

\begin{lem}\label{1}
	The maps $\eta_{k,j}$, $\eta_{k,j,l}$ lift through $\Omega F\xrightarrow{\Omega\nu} \Omega\Sigma A$.
\end{lem}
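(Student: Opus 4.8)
The plan is to exploit the uniqueness clause of James's theorem (Theorem \ref{James}) a second time, now with target $\Omega F$ in place of $\Omega\Sigma A$. The key observation is that $\Omega F$ is a loop space, hence a homotopy associative H-space, so Theorem \ref{James} applies equally well with $\Omega F$ as target. This is what allows the lifts $\mu_{k,j}$ and $\mu_{k,j,l}$ of the underlying maps to be promoted to lifts of the H-maps $\eta_{k,j}$ and $\eta_{k,j,l}$.

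First I would recall that we already possess lifts $\mu_{k,j}:A_k\wedge A_j\to\Omega F$ and $\mu_{k,j,l}:A_k\wedge A_j\wedge A_l\to\Omega F$ of $\overline{\eta}_{k,j}$ and $\overline{\eta}_{k,j,l}$ through $\Omega\nu$, so that $\Omega\nu\circ\mu_{k,j}\simeq\overline{\eta}_{k,j}$ and similarly in the length three case. Applying Theorem \ref{James} to $\mu_{k,j}$ produces a unique H-map $\tilde{\eta}_{k,j}:\Omega\Sigma(A_k\wedge A_j)\to\Omega F$ with $\tilde{\eta}_{k,j}\circ E\simeq\mu_{k,j}$, and likewise a unique H-map $\tilde{\eta}_{k,j,l}:\Omega\Sigma(A_k\wedge A_j\wedge A_l)\to\Omega F$ with $\tilde{\eta}_{k,j,l}\circ E\simeq\mu_{k,j,l}$. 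These are my candidate lifts.

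Next I would verify that $\Omega\nu\circ\tilde{\eta}_{k,j}\simeq\eta_{k,j}$. Since $\nu$ is a map, $\Omega\nu$ is a looped map and hence an H-map; as $\tilde{\eta}_{k,j}$ is also an H-map, the composite $\Omega\nu\circ\tilde{\eta}_{k,j}:\Omega\Sigma(A_k\wedge A_j)\to\Omega\Sigma A$ is an H-map. Precomposing with the suspension $E$ gives
\begin{equation*}
	(\Omega\nu\circ\tilde{\eta}_{k,j})\circ E \simeq \Omega\nu\circ(\tilde{\eta}_{k,j}\circ E) \simeq \Omega\nu\circ\mu_{k,j} \simeq \overline{\eta}_{k,j}.
\end{equation*}
But by construction $\eta_{k,j}$ is the H-map produced by Theorem \ref{James} satisfying $\eta_{k,j}\circ E\simeq\overline{\eta}_{k,j}$. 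Since the target $\Omega\Sigma A$ is a homotopy associative H-space, the uniqueness clause of Theorem \ref{James} forces $\Omega\nu\circ\tilde{\eta}_{k,j}\simeq\eta_{k,j}$. The identical argument applied to $\mu_{k,j,l}$ and $\overline{\eta}_{k,j,l}$ yields $\Omega\nu\circ\tilde{\eta}_{k,j,l}\simeq\eta_{k,j,l}$, so $\tilde{\eta}_{k,j}$ and $\tilde{\eta}_{k,j,l}$ are the desired lifts.

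I do not expect a genuine obstruction here; the content is entirely formal. The one point requiring care is the double use of James's theorem and, in particular, its uniqueness statement, which is precisely what lets us identify the a priori unknown composite $\Omega\nu\circ\tilde{\eta}_{k,j}$ with $\eta_{k,j}$ rather than with some other H-map extending $\overline{\eta}_{k,j}$. The only hypotheses to check are that $\Omega F$ and $\Omega\Sigma A$ are homotopy associative H-spaces, which holds as both are loop spaces, and that $\Omega\nu$ is an H-map, which holds as it is a looped map.
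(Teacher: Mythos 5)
Your proof is correct, but it takes a different route from the paper's. The paper argues via the fibration: since $\Omega i$, $\eta_{k,j}$ and $\eta_{k,j,l}$ are H-maps, James's uniqueness clause says the composites $(\Omega i)\circ\eta_{k,j}$ and $(\Omega i)\circ\eta_{k,j,l}$ are determined by their restrictions along $E$, which are $(\Omega i)\circ\overline{\eta}_{k,j}$ and $(\Omega i)\circ\overline{\eta}_{k,j,l}$ and hence trivial by construction; the lifts then exist by the homotopy lifting property of the fibration $\Omega F\xrightarrow{\Omega\nu}\Omega\Sigma A\xrightarrow{\Omega i}SU(p+t-1)$. You instead bypass the lifting property entirely: you start from the already-constructed lifts $\mu_{k,j}$ and $\mu_{k,j,l}$ of the restrictions, extend them to H-maps into $\Omega F$ by a second application of Theorem \ref{James} (legitimate, since $\Omega F$ is a loop space), and then use the uniqueness clause in $\Omega\Sigma A$ to identify $\Omega\nu\circ\tilde{\eta}_{k,j}$ with $\eta_{k,j}$. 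Both arguments hinge on the same uniqueness statement, but yours trades the appeal to the fibration for a dependence on the prior existence of $\mu_{k,j}$ and $\mu_{k,j,l}$ (which the paper does establish just before the lemma), and it buys something extra: your lifts $\tilde{\eta}_{k,j}$, $\tilde{\eta}_{k,j,l}$ are themselves H-maps, a slightly stronger conclusion than the paper records, though not one that is needed downstream.
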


\begin{proof}
It is equivalent to show that the maps $(\Omega i)\circ\eta_{k,j}$ and $(\Omega i)\circ \eta_{k,j,l}$ are null homotopic. Since $\Omega i$, $\eta_{k,j}$ and $\eta_{k,j,l}$ are H-maps, then by Theorem \ref{James} the homotopy  class of $(\Omega i)\circ \eta_{k,j}$ and $(\Omega i)\circ \eta_{k,j,l}$ are determined by the restrictions
\begin{equation*}
	A_k\wedge A_j \xrightarrow{E} \Omega\Sigma(A_k\wedge A_j)\xrightarrow{\eta_{k,j}}\Omega\Sigma A \xrightarrow{\Omega i} SU(p+t-1)
\end{equation*}
and
\begin{equation*}
	A_k\wedge A_j\wedge A_l \xrightarrow{E} \Omega\Sigma(A_k\wedge A_j\wedge A_l)\xrightarrow{\eta_{k,j,l}}\Omega\Sigma A \xrightarrow{\Omega i} SU(p+t-1)
\end{equation*}
respectively.
These restrictions are $\overline{\eta}_{k,j}$ and $\overline{\eta}_{k,j,l}$.
By construction we know that $(\Omega i)\circ\overline{\eta}_{k,j}$ $(\Omega i)\circ\overline{\eta}_{k,j,l}$ are trivial.
Therefore $(\Omega i)\circ\eta_{k,j}$ and $(\Omega i)\circ\eta_{k,j,l}$ are trivial.
\end{proof}

We can also state the image in homology of $\eta_{k,j}$ and $\eta_{k,j,l}$.

\begin{lem}\label{2}
	The maps induced in homology by $\eta_{k,j}$ and $\eta_{k,j,l}$ are equal to those induced by $\Omega[i_k,i_j]$ and $\Omega[i_k,[i_j,i_l]]$ respectively.
\end{lem}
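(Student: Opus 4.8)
The plan is to exploit that all four maps are H-maps between loop spaces on suspensions, so that each induces an \emph{algebra} homomorphism on Pontryagin rings which is therefore determined by its restriction to a set of algebra generators. First I would record the H-map property of each map: $\eta_{k,j}$ and $\eta_{k,j,l}$ are H-maps by construction (they are produced via James' theorem, Theorem \ref{James}), while $\Omega[i_k,i_j]$ and $\Omega[i_k,[i_j,i_l]]$ are loop maps and hence automatically H-maps for the loop multiplications. By the Bott--Samelson theorem the ring $H_{\ast}(\Omega\Sigma(A_k\wedge A_j))$ is the free tensor algebra $T(\tilde H_{\ast}(A_k\wedge A_j))$, generated as an algebra by the image of the suspension $E_{\ast}\colon\tilde H_{\ast}(A_k\wedge A_j)\to H_{\ast}(\Omega\Sigma(A_k\wedge A_j))$. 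Thus both $(\eta_{k,j})_{\ast}$ and $(\Omega[i_k,i_j])_{\ast}$ are determined by their values on these generators, and it suffices to compare the two restrictions.

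Next I would compute each restriction to the generating module. By construction $\eta_{k,j}\circ E\simeq\overline\eta_{k,j}$, so $(\eta_{k,j})_{\ast}$ on generators is $(\overline\eta_{k,j})_{\ast}$. For the Whitehead product I would invoke the classical adjointness between Whitehead and Samelson products: the composite $A_k\wedge A_j\xrightarrow{E}\Omega\Sigma(A_k\wedge A_j)\xrightarrow{\Omega[i_k,i_j]}\Omega\Sigma A$ is the adjoint of the Whitehead product $[i_k,i_j]$, which is precisely the Samelson product $\langle\theta_k,\theta_j\rangle$, inducing in homology the graded commutator of the generators in $T(\tilde H_{\ast}(A))$. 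Since $\overline\eta_{k,j}=\langle\theta_k,\theta_j\rangle-a_{k,j}$ and the proof of Lemma \ref{trivimage} shows $(a_{k,j})_{\ast}=0$, the two restrictions agree in homology, whence $(\eta_{k,j})_{\ast}=(\Omega[i_k,i_j])_{\ast}$ on all of $H_{\ast}(\Omega\Sigma(A_k\wedge A_j))$.

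The length-three statement follows by the identical argument: the restriction of $\Omega[i_k,[i_j,i_l]]$ to the generating module is the iterated Samelson product $\langle\theta_k,\langle\theta_j,\theta_l\rangle\rangle$, whose homology image is the iterated commutator matching $\Omega[i_k,[i_j,i_l]]$, while $\eta_{k,j,l}\circ E\simeq\overline\eta_{k,j,l}=\langle\theta_k,\langle\theta_j,\theta_l\rangle\rangle-a_{k,j,l}$ with $(a_{k,j,l})_{\ast}=0$. Again both maps are H-maps determined on generators, so the homology homomorphisms coincide.

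The hard part will be the precise identification in the second step: one must pin down the adjointness between $\Omega[i_k,i_j]\circ E$ and $\langle\theta_k,\theta_j\rangle$ and track the sign $(-1)^{|x_k||x_j|}$ appearing in the graded commutator so that the homology images agree \emph{exactly} rather than merely up to sign, which requires fixing compatible orientation and bracket conventions. For the nested bracket one must further check that this correspondence is compatible with iteration, i.e.\ that $\Omega[i_k,[i_j,i_l]]$ restricts to the nested Samelson product; this is the Jacobi/associativity bookkeeping relating iterated Samelson and Whitehead products, but it introduces no new homotopy-theoretic input beyond the length-two case.
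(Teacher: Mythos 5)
Your proposal is correct and follows essentially the same route as the paper: both arguments use that the maps are H-maps (via Theorem \ref{James} and the loop structure) and hence are multiplicative extensions of their restrictions to the generating module, then identify those restrictions as $\overline{\eta}_{k,j}$ and the adjointed Whitehead (i.e.\ Samelson) product, and finally invoke Lemma \ref{trivimage} to kill the correcting terms $a_{k,j}$ and $a_{k,j,l}$ in homology. The extra care you flag about adjointness conventions and signs is a reasonable refinement but introduces nothing beyond what the paper's proof already relies on.
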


\begin{proof}
First note that the images of $\Omega[i_k,i_j]$ and $\Omega[i_k,[i_j,i_l]]$ in homology are the multiplicative extensions of $(\Omega\langle i_k,i_j\rangle)_{\ast}$ and $(\Omega\langle i_k,\langle i_j,i_l\rangle\rangle)_{\ast}$.
By Theorem \ref{James} we know that $(\eta_{k,j})_{\ast}$ and $(\eta_{k,j,l})_{\ast}$ are multiplicative extensions of $(\overline{\eta}_{k,j})_{\ast}$ and $(\overline{\eta}_{k,j,l})_{\ast}$.
The proof is completed by applying Lemma \ref{trivimage}.
\end{proof}

We can now define a map very similar to that from the Hilton-Milnor Theorem.
Let $k_i\in \{2,\ldots,p\}$.
Define a map $\lambda:\Omega Q\rightarrow \Omega\Sigma A$ by its restriction to the factors $\Omega\Sigma(A_{k_1}\wedge\ldots \wedge A_{k_m})$ of $\Omega Q$ as
\begin{itemize}
	\item $\Omega[i_{k_1},[i_{k_2},\ldots,[i_{k_{m-1}},i_{k_m}]]]$ if $m\geq 4$,
	\item $\Omega[i_{k_1},[i_{k_2},\ldots,[i_{k_{m-1}},i_{k_m}]]]$ if $m=3$ and $2\leq t< \frac{p+1}{2}$,
	\item $\eta_{k_1,k_2,k_3}$ if $m=3$ and $\frac{p+1}{2}\leq t\leq p$ and
	\item $\eta_{k_1,k_2}$ if $m=2$. 
\end{itemize}

In particular, $\lambda=\Omega\overline{f}$ if $m\geq 4$ or if $m=3$ and $2\leq t\leq \frac{p+1}{2}$.
In the two other cases by Lemma \ref{2} we have $(\eta_{k,j,l})_{\ast}=(\Omega[i_k,[i_j,i_l]])_{\ast}$ and $(\eta_{k,j})_{\ast}=(\Omega[i_k,i_j])_{\ast}$.
So collectively we obtain the following.

\begin{lem}
	$\lambda_{\ast}=(\Omega\overline{f})_{\ast}$.
\qed
\end{lem}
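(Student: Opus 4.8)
The plan is to prove the equality of induced homology maps factor by factor over the Hilton--Milnor decomposition of $\Omega Q$, and then to assemble these local statements using the fact that both $\lambda$ and $\Omega\overline{f}$ are H-maps, so that each induces a ring homomorphism on Pontryagin homology.

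First I would record the structural fact underlying the reduction. By Theorem \ref{HM} we have $\Omega Q=\prod_{j\in\mathcal{J}}\Omega\Sigma(A_2^{(j_2)}\wedge\ldots\wedge A_p^{(j_p)})$, and the Pontryagin ring $H_{\ast}(\Omega Q)$ is the tensor product of the Pontryagin rings of the factors, the inclusion of each factor inducing the inclusion of the corresponding tensor factor. Since $\Omega\overline{f}$ is a loop map, and $\lambda$ is by construction the multiplicative extension of its restrictions---each restriction being either a loop map or one of the H-maps $\eta_{k,j},\eta_{k,j,l}$ supplied by Theorem \ref{James}---both $(\Omega\overline{f})_{\ast}$ and $\lambda_{\ast}$ are ring homomorphisms. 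A ring homomorphism out of $H_{\ast}(\Omega Q)$ is determined by its values on the generating tensor factors, so it suffices to show that $\lambda$ and $\Omega\overline{f}$ agree in homology after restriction to each factor $\Omega\Sigma(A_{k_1}\wedge\ldots\wedge A_{k_m})$.

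Next I would dispose of the factors on which the two maps literally coincide. By Theorem \ref{HM} the restriction of $\Omega\overline{f}$ to a factor is the loops on the iterated Whitehead product indexed by $j$. For $m\geq 4$, and for $m=3$ with $2\leq t<\frac{p+1}{2}$, the definition of $\lambda$ places exactly this looped Whitehead product on the factor, so the induced homology maps agree on the nose. This leaves only the exceptional factors, namely $m=3$ with $\frac{p+1}{2}\leq t\leq p$ and $m=2$, where $\lambda$ restricts to $\eta_{k_1,k_2,k_3}$ and $\eta_{k_1,k_2}$ respectively. On these I would invoke Lemma \ref{2}, which gives $(\eta_{k,j,l})_{\ast}=(\Omega[i_k,[i_j,i_l]])_{\ast}$ and $(\eta_{k,j})_{\ast}=(\Omega[i_k,i_j])_{\ast}$; since the right-hand sides are precisely the homology maps of the corresponding restrictions of $\Omega\overline{f}$, equality holds on the exceptional factors as well. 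Combining the two cases, the restrictions of $\lambda$ and $\Omega\overline{f}$ agree in homology on every factor, whence $\lambda_{\ast}=(\Omega\overline{f})_{\ast}$ by the reduction of the previous paragraph.

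The step I expect to be the main obstacle is the local-to-global reduction in the second paragraph: one must verify that $\lambda_{\ast}$ is genuinely multiplicative with respect to the tensor decomposition of $H_{\ast}(\Omega Q)$, so that factor-wise agreement of homology maps propagates to the whole product. This hinges on $\lambda$ having been assembled as the multiplicative extension of its factor restrictions and on all of those restrictions being H-maps; the maps $\eta_{k,j}$ and $\eta_{k,j,l}$ qualify because they were produced by James's theorem (Theorem \ref{James}), and the looped Whitehead products qualify as loop maps. Once this is in hand, the lemma is essentially a bookkeeping consequence of Theorem \ref{HM} and Lemma \ref{2}.
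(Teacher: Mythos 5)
Your argument is correct and is essentially the paper's own: the paper justifies this lemma in the two sentences preceding its statement, observing that $\lambda$ literally coincides with $\Omega\overline{f}$ on the factors with $m\geq 4$ (and $m=3$, $t<\frac{p+1}{2}$), and invoking Lemma \ref{2} on the exceptional factors. Your explicit local-to-global step via the tensor decomposition of $H_{\ast}(\Omega Q)$ and multiplicativity of the induced maps is exactly the bookkeeping the paper leaves implicit in the word ``collectively.''
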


Putting Theorems \ref{1} \ref{nulllength4} together we obtain a homotopy commutative triangle
\begin{equation*}
	\xymatrix{ \empty                       & \Omega Q       \ar[d]^-{\lambda} \ar[dl]_-{g'} \\ 
               \Omega F \ar[r]^-{\Omega\nu} & \Omega\Sigma A                                 }
\end{equation*}
for some lift $g'$.

Next, Theorem \ref{CN} lets us deal with the maps $\Omega f_k:\Omega R_k\rightarrow \Omega\Sigma A_k$ with the information we have about the Whitehead products.
The map $f_k$ is either a sum of maps which factor through a length two Whitehead product or through a wedge sum of a length two and length three Whitehead products. 
If $2k\geq p+2$ then we replace a length two Whitehead product by the adjoint of the difference $\langle i_k,i_k\rangle -a_{k,k}$.
If $3k= 2p+1,2p+2,2p+3$ or $3p$ then we replace the length three Whitehead product factor of $f_k$ by the adjoint of the difference $\langle i_k,\langle i_k,i_k\rangle\rangle -a_{k,k,k}$.
Amending these as before gives us a map $\overline{f}_k:R_k\rightarrow \Sigma A_k\hookrightarrow \Sigma A$ such that the composition
\begin{equation*}
	\Omega R_k\xrightarrow{\Omega\overline{\eta}_k} \Omega\Sigma A_k \hookrightarrow \Omega\Sigma A \xrightarrow{\Omega i} SU(p+t-1)
\end{equation*}
is trivial.

Following the proofs of Lemmas \ref{1} and \ref{2} we get the following result.

\begin{lem}\label{3}
	The map $\overline{f}_k$ lifts through $\Omega\nu$ and has image in homology equal to the image in homology of the composition 
	\begin{equation*}
		\Omega R_k\xrightarrow{\Omega f}\Omega\Sigma A_k\hookrightarrow \Omega\Sigma A.
	\end{equation*}
\end{lem}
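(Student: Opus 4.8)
The plan is to treat $\overline{f}_k$ exactly as the amended Samelson products $\eta_{k,j}$ and $\eta_{k,j,l}$ were treated in Lemmas \ref{1} and \ref{2}, the only new feature being that the relevant Whitehead products now come from the Cohen--Neisendorfer factorization of $f_k$ in Theorem \ref{CN} rather than from the Hilton--Milnor summands of $\Omega Q$. I would split the argument into the lifting statement and the homology statement, matching those two lemmas.

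For the lift, recall that $\overline{f}_k$ is obtained by replacing the Whitehead-product factors of $f_k$ supplied by Theorem \ref{CN} with the adjoints of the amended differences $\langle i_k,i_k\rangle - a_{k,k}$ and, when $S_k=S^{2k-1}$, also $\langle i_k,\langle i_k,i_k\rangle\rangle - a_{k,k,k}$. These are precisely the restrictions defining the H-maps $\eta_{k,k}$ and $\eta_{k,k,k}$ of the earlier construction, so $\Omega\overline{f}_k$ factors through $\eta_{k,k}$ (and $\eta_{k,k,k}$) after looping the Cohen--Neisendorfer factorization of $f_k$. Since Lemma \ref{1} shows that $(\Omega i)\circ\eta_{k,k}$ and $(\Omega i)\circ\eta_{k,k,k}$ are null homotopic, it follows that the composite
\begin{equation*}
	\Omega R_k \xrightarrow{\Omega\overline{f}_k} \Omega\Sigma A \xrightarrow{\Omega i} SU(p+t-1)
\end{equation*}
is null homotopic, whence $\Omega\overline{f}_k$ lifts through $\Omega\nu$ in the fibration $\Omega F\xrightarrow{\Omega\nu}\Omega\Sigma A\xrightarrow{\Omega i}SU(p+t-1)$.

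For the homology statement I would argue as in Lemma \ref{2}. The map $\overline{f}_k$ differs from the unamended map $f$ (followed by the inclusion $\Sigma A_k\hookrightarrow\Sigma A$) only by the correcting terms built from $a_{k,k}$ and $a_{k,k,k}$, so on homology $(\Omega\overline{f}_k)_\ast$ and the image of $\Omega R_k\xrightarrow{\Omega f}\Omega\Sigma A_k\hookrightarrow\Omega\Sigma A$ agree as soon as these corrections vanish in homology. That vanishing is the content of Lemma \ref{trivimage}: each correction factors through a sphere $S^{2m-1}$ with $2\le m\le p$, while the relevant smash powers of $A_k$ have all their cells in dimensions exceeding $2p-1$ and hence above the top cell of $S^{2m-1}$, so the corrections are zero in homology for dimensional reasons.

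The main obstacle I expect is bookkeeping rather than anything conceptual. I must check that the correct amendment is attached in each case---a length-two correction exactly when $2k\ge p+2$ and a length-three correction exactly when $3k\in\{2p+1,2p+2,2p+3,3p\}$---and that in each such case the relevant Samelson product has already been shown, via Proposition \ref{sama}, Lemma \ref{samfacb} and Corollary \ref{sam3}, to factor through a sphere $S^{2m-1}$ with $m\le p$, so that the correcting map $a$ is defined and Lemma \ref{trivimage} applies. The exclusion of $t=p$ is essential here, since the needed factorization of $\langle i_p,i_p\rangle$ through a low-dimensional sphere fails in that case.
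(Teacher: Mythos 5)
Your proposal is correct and follows essentially the same route as the paper: the paper also obtains the lift from the triviality of $(\Omega i)\circ\overline{\eta}_k$ (the composite of the amended map with $\Omega i$), exactly as in Lemma \ref{1}, and derives the homology statement from Lemma \ref{trivimage}, exactly as in Lemma \ref{2}. Your write-up is in fact more detailed than the paper's two-line proof, and your bookkeeping remarks about when the length-two and length-three amendments are attached match the construction of $\overline{f}_k$ given just before the lemma.
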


\begin{proof}
The proof that $\overline{f}_k$ lifts through $\Omega\nu$ follows from the fact that $(\Omega i)\circ \overline{\eta}_{k}$ is trivial. The equality of the images in homology follows directly from Lemma \ref{trivimage}.
\end{proof}



Collecting the maps $\overline{f}_k$ together we get a map
\begin{equation*}
	\lambda':\prod_{k=2}^{p}\Omega R_k \xrightarrow{\prod\Omega\overline{f}_k} \prod_{k=2}^{p}\Omega\Sigma A \xrightarrow{\mu^{k-1}} \Omega\Sigma A
\end{equation*}
Where $\mu$ is the multiplication on $\Omega\Sigma A\times\Omega\Sigma A\rightarrow \Omega\Sigma A$.
We can then define a map $\rho:\Omega Q\times\prod_{k=2}^{p}\Omega R_k\rightarrow \Omega\Sigma A$ by the composite
\begin{equation*}
	\rho:\Omega Q\times\prod_{k=2}^{p}\Omega R_k\xrightarrow{\lambda\times\lambda'} \Omega\Sigma A\times\Omega\Sigma A\xrightarrow{\mu}\Omega\Sigma A. 
\end{equation*}

Putting Lemmas \ref{1} and \ref{3} together we obtain a commutative diagram
\begin{equation}\label{diag}
	\xymatrix{                   & \Omega Q\times\prod_{k=2}^{p}\Omega R_k \ar[d]^-{\rho} \ar[dl]_-{g} \\
						    \Omega F \ar[r]^-{\Omega\nu} & \Omega\Sigma A                                                      }
\end{equation}
for some lift $g$.
Also notice that by Lemmas \ref{2} and \ref{3} we have that $\rho_{\ast}=(\Omega\nu)_{\ast}$.

\begin{proof}[Proof of Theorem \ref{a}]
By Lemmas \ref{decomp} and \ref{same} we know that $\Omega F$ and $\Omega Q\times\prod_{k=2}^{p}\Omega R_k$ have the same homotopy type.
Therefore they both have the same Euler-Poincar\'{e} series.
By construction, $\rho_{\ast}$ is a monomorphism, hence so is $(\Omega\nu)_{\ast}$.
Therefore $g_{\ast}$ is a monomorphism between two $\Zp$ vector spaces with the same Euler Poincar\'{e} series.
Hence $g_{\ast}$ is an isomorphism, and so Whitehead's Theorem tells us that $g$ is a homotopy equivalence. 
\end{proof}

\section{Proof Of Theorem \ref{b}}

Before continuing we will need a small amount of setting up.
Let $Y$ be a homotopy associative H-space and suppose that there is a space $X$ and map $\overline{f}:X\rightarrow Y$ such that $H_{\ast}(Y)\cong \Lambda (\tilde{H}_{\ast}(X))$, with $\overline{f}_{\ast}$ inducing the inclusion of the generating set. 
By Theorem \ref{James} this extends to a map $f:\Omega\Sigma X\rightarrow Y$.
The map $f$ is an H-map and is the unique map such that $f\circ E\simeq \overline{i}$.
There is then a homotopy fibration
\begin{equation*}
	K\xrightarrow{h} \Omega\Sigma X \xrightarrow{f} Y 
\end{equation*}
defining the space $K$ and map $h$.

With this in mind we state a result of Theriault \cite{T}.
\begin{lem}\label{dia}
	Suppose that $f$ has a right homotopy inverse.
	Let $Z$ be a homotopy associative H-space and $\overline{e}:X\rightarrow Z$ be any map. 
	Theorem \ref{James} tells us that this extends uniquely to an H-map $e:\Omega\Sigma X\rightarrow Z$ such that $E\circ h\simeq \overline{e}$.
	If the composite $K\xrightarrow{h} \Omega\Sigma X\xrightarrow{e} Z$ is null homotopic then there exists a homotopy commutative diagram
	\begin{equation*}
		\xymatrix{ \Omega\Sigma X \ar@2{-}[d] \ar[r]^-{f} & Y \ar[d]^-{g} \\
    	           \Omega\Sigma X \ar[r]^-{e}            & Z            }
	\end{equation*}
	for some map $g$ which can be chosen to be an H-map.
	\qed
\end{lem}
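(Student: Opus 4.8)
The plan is to produce the factoring map as $g=e\circ s$, where $s\colon Y\to\Omega\Sigma X$ is the given right homotopy inverse of $f$, and then to check separately that this $g$ satisfies $g\circ f\simeq e$ and is an H-map. Throughout I would use that $\Omega\Sigma X$ is an H-group (it is a loop space) and that $Z$ is an H-group; since a connected homotopy-associative H-space of CW type always admits a homotopy inverse, this is automatic in the cases at hand. In particular both spaces carry homotopy inverses, written $(\ )^{-1}$, so that pointwise products and ``differences'' make sense up to homotopy.

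First I would measure the failure of $s$ to be a two-sided inverse. Using the H-structure on $\Omega\Sigma X$, define the difference map $d\colon\Omega\Sigma X\to\Omega\Sigma X$ pointwise by $d(x)\simeq x\cdot(sf(x))^{-1}$, that is $d=\mu\circ(\mathrm{id}\times(s\circ f)^{-1})\circ\Delta$ with $\mu$ the multiplication, $\Delta$ the diagonal, and $(s\circ f)^{-1}$ the composite $s\circ f$ followed by the homotopy inverse. Since $f$ is an H-map and $f\circ s\simeq\mathrm{id}_Y$, composing with $f$ gives $f\circ d(x)\simeq f(x)\cdot(fsf(x))^{-1}\simeq f(x)\cdot f(x)^{-1}\simeq\ast$, so $f\circ d\simeq\ast$. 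By the defining property of the homotopy fibre in $K\xrightarrow{h}\Omega\Sigma X\xrightarrow{f}Y$, the map $d$ then lifts through $h$: there is $\phi\colon\Omega\Sigma X\to K$ with $h\circ\phi\simeq d$.

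The hypothesis $e\circ h\simeq\ast$ now does the work. On one hand $e\circ d\simeq e\circ h\circ\phi\simeq\ast$. On the other hand, since $e$ is an H-map, $e\circ d(x)\simeq e(x)\cdot(esf(x))^{-1}$, so $e\circ d\simeq e\cdot(e\circ s\circ f)^{-1}$ as maps into the H-group $Z$. Cancelling, using homotopy associativity and the homotopy inverse on $Z$, yields $e\simeq e\circ s\circ f$. Setting $g=e\circ s\colon Y\to Z$ then gives $g\circ f=e\circ s\circ f\simeq e$, which is the required homotopy-commutative triangle. To see that $g$ may be taken to be an H-map I would use the right inverse a second time: comparing $g\circ\mu_Y$ and $\mu_Z\circ(g\times g)$ as maps $Y\times Y\to Z$, I precompose each with $f\times f$; using that $f$ and $e$ are H-maps together with $g\circ f\simeq e$, both composites reduce to $e\circ\mu_{\Omega\Sigma X}$, so they agree after precomposition with $f\times f$. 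Postcomposing with $s\times s$ and using $(f\times f)\circ(s\times s)=(fs)\times(fs)\simeq\mathrm{id}_{Y\times Y}$ then gives $g\circ\mu_Y\simeq\mu_Z\circ(g\times g)$, so $g$ is an H-map.

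The steps that require the most care—and what I would regard as the crux—are those of the second and third paragraphs: one must verify that $f\circ d\simeq\ast$ as a \emph{based} homotopy so that $d$ genuinely factors through $K$, and that the pointwise identities for $d$ and for $e\circ d$ are legitimate up to homotopy, which rests squarely on the homotopy associativity and the homotopy inverses of $\Omega\Sigma X$ and $Z$. Once the discrepancy between $\mathrm{id}_{\Omega\Sigma X}$ and $s\circ f$ is correctly located in the fibre $K$, so that the vanishing of $e\circ h$ forces $e\simeq e\circ s\circ f$, the remaining cancellations and the H-map verification are formal.
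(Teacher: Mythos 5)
Your proof is correct. Note that the paper does not actually prove this lemma: it is quoted from Theriault's work with a \qed and no argument, so there is nothing in the source to compare against; your write-up supplies the standard proof that the citation points to. The key moves — setting $g=e\circ s$, locating the discrepancy $d=\mathrm{id}\cdot(s\circ f)^{-1}$ in the homotopy fibre $K$ via $f\circ d\simeq \ast$, using $e\circ h\simeq\ast$ to cancel and obtain $e\simeq e\circ s\circ f$, and then verifying the H-map condition by precomposing with $f\times f$ and $s\times s$ — are exactly what is needed, and you correctly identify the two points requiring care: that the nullhomotopy of $f\circ d$ be based (automatic here since the targets are simply connected, or at least simple, H-spaces) and that $Y$ and $Z$ carry homotopy inverses so that the pointwise cancellations are legitimate in the groups $[\,\cdot\,,Y]$ and $[\,\cdot\,,Z]$ (automatic for connected homotopy associative H-spaces of CW type). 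One could even avoid invoking inverses on $Y$ and $Z$ by working in the monoids $[\Omega\Sigma X,Y]$ and $[\Omega\Sigma X,Z]$ and using that $f_\ast$ and $e_\ast$ are monoid homomorphisms carrying the two-sided inverse $-(s\circ f)$ to a two-sided inverse, but your appeal to the classical existence of homotopy inverses is perfectly adequate.
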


We will use Lemma \ref{dia} to show the following lemma.
\begin{lem}\label{z}
	If $2\leq t <p$ then there exists a homotopy commutative diagram
	\begin{equation*}
		\xymatrix{ \Omega\Sigma A \ar[d]_-{\Omega\degppp} \ar[r]^-{\Omega i} & SU(p+t-1) 	\ar[d]^-{g} \\
        	       \Omega\Sigma A \ar[r]^-{\Omega i}                       & SU(p+t-1)            }
    \end{equation*}
	where $g$ can be chosen to be an H-map.
\end{lem}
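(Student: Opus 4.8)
The plan is to apply Lemma \ref{dia} with the dictionary $X=A$, $Y=Z=SU(p+t-1)$, $f=\Omega i$, and $e=\Omega i\circ\Omega\degppp$. To set this up I first note that $\Omega i$ has a right homotopy inverse by the Cohen--Neisendorfer decomposition recorded in Lemma \ref{decomp}, so the hypothesis of Lemma \ref{dia} is met, and the fibration $K\to\Omega\Sigma A\xrightarrow{\Omega i}SU(p+t-1)$ is exactly the fibration $\Omega F\xrightarrow{\Omega\nu}\Omega\Sigma A\xrightarrow{\Omega i}SU(p+t-1)$ studied throughout, so $K\simeq\Omega F$ and $h=\Omega\nu$. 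Taking $e=\Omega i\circ\Omega\degppp$, which is an H-map since $\Omega i$ is an H-map and $\Omega\degppp$ is a loop map, Lemma \ref{dia} produces the desired homotopy commutative square (with $e$ drawn as $\Omega i$ after precomposing the left edge by $\Omega\degppp$) together with an H-map $g$, provided I can verify the one remaining hypothesis: that the composite
\begin{equation*}
\Omega F\xrightarrow{\Omega\nu}\Omega\Sigma A\xrightarrow{\Omega\degppp}\Omega\Sigma A\xrightarrow{\Omega i}SU(p+t-1)
\end{equation*}
is null homotopic.

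The heart of the argument, and the main obstacle, is therefore establishing this nullhomotopy, and this is precisely where Theorem \ref{a} and the Samelson product bounds of Section 3 are brought to bear. Using the equivalence $g$ of Theorem \ref{a}, I would replace $\Omega F$ by the explicit product $\prod_{j\in\mathcal{J}}\Omega\Sigma(A_2^{(j_2)}\wedge\cdots)\times\prod_{k=2}^{p}\Omega R_k$ and check the composite on each factor separately, since $\Omega\degppp$ and $\Omega i$ are H-maps and the factors map in via looped (amended) Whitehead products. On a factor corresponding to an iterated Whitehead product of length $\geq 4$, the composite into $SU(p+t-1)$ is already trivial by Lemma \ref{nulllength4}, before the $\degppp$ is even used. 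On the length $2$ and length $3$ factors (and the $\Omega R_k$ factors), the map into $SU(p+t-1)$ is a looped Samelson product $\langle i_k,i_j\rangle$ or $\langle i_k,\langle i_j,i_l\rangle\rangle$ (or an amended version thereof), and the point is that precomposing with $\Omega\degppp$ multiplies the relevant Samelson class by $p^3$.

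The decisive input is then the order bounds assembled in Section 3: Propositions \ref{sama} and \ref{samb} and Lemma \ref{samb2} show that every length $2$ Samelson product $\langle i_k,i_j\rangle$ has order dividing $p^3$ (with the sharper bounds $p$ or $p^2$ in most cases), while the remark following Lemma \ref{samb2} together with Corollary \ref{sam3} shows the length $3$ products have order dividing $p^2$. Since composing a map of order dividing $p^3$ with the degree-$p^3$ map (on the co-H-domain, where $\Omega\degppp$ descends to a degree $p^3$ self-map of the suspension coordinate) annihilates it, each factor composes trivially into $SU(p+t-1)$. I would be careful here that the $t=p$ case is excluded exactly because the factorization of $\langle i_p,i_p\rangle$, and hence its order bound, is unavailable; the hypothesis $2\leq t<p$ is what guarantees all the needed bounds hold. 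Assembling the factorwise nullhomotopies via the H-space structure gives the nullhomotopy of the whole composite, and Lemma \ref{dia} then delivers the square and the H-map $g$.
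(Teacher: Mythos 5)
Your proposal follows essentially the same route as the paper: reduce via Lemma \ref{dia} to the nullhomotopy of $(\Omega i)\circ(\Omega\degppp)\circ\Omega\nu$, then verify this factor by factor using the decomposition of Theorem \ref{a}, the nilpotency bound for the long Whitehead products, and the order bounds on the length two and three Samelson products (including the correcting terms $a_{k,j}$ in the amended cases). The only cosmetic difference is that the paper computes the effect of $\Omega\degppp$ on a length two Samelson product as precomposition with $\underline{p}^6$ (each input contributes a $\degppp$) rather than $p^3$, but either exponent suffices against the order bound of $p^3$.
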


\begin{proof}
By Lemma \ref{dia} it is enough to show that the composite
\begin{equation*}
	\Omega F \xrightarrow{\Omega\nu} \Omega\Sigma A \xrightarrow{\Omega\degppp} \Omega\Sigma A \xrightarrow{\Omega i} SU(p+t-1)
\end{equation*}
is null homotopic.
We prove the case when $\nil(SU(p+t-1))=2$, the case when $\nil(SU(p+t-1))=3$ being similar.
By Theorem \ref{a}, $\Omega F$ decomposes as
\begin{equation*}
	\prod_{j\in\mathcal{J}}\Omega\Sigma\left( A_2^{(j_2)}\wedge A_3^{(j_3)} \wedge\ldots\wedge A_t^{(j_t)}\wedge S^{(j_{t+1}(2t+1))}\wedge\ldots\wedge S^{(j_{p}(2p-1))} \right)\times\prod_{k=2}^{p} \Omega R_k.
\end{equation*}
Since $\Omega\nu$ is an H-map it is determined by the product of the restrictions to each factor of $\Omega F$.
First consider any factor indexed by $\mathcal{J}$ involving the smash of three or more spaces.
By Theorem \ref{HM} the restriction of $\Omega\nu$ to such a factor is a looped Whitehead product of length three or more.
The composite
\begin{equation*}
	\Omega\Sigma(A_{k_1}\wedge \ldots\wedge A_{k_t}) \xrightarrow{\Omega[\theta_{k_1},[\theta_{k_2},\ldots,[\theta_{k_t}]\ldots]]}\Omega\Sigma A \xrightarrow{\Omega\degppp} \Omega\Sigma A\xrightarrow{\Omega i} SU(p+t-1)
\end{equation*}
is of loop maps and so by Theorem \ref{James} is determined by its restriction to the wedge product $A_{k_1}\wedge A_{k_2}\wedge\ldots\wedge A_{k_t}$.
This restriction is the composite 
\begin{equation*}
	(\Omega i)\circ (\Omega\degppp)\circ \langle\theta_{k_1},\langle\theta_{k_2},\ldots\langle \theta_{k_{t-1}},\theta_{k_t}\rangle\ldots\rangle\rangle.
\end{equation*}
Then as $\Omega\degppp$ and $\Omega i$ are both H-maps this is homotopic to the Samelson product
\begin{equation*}
	\langle(\Omega i)\circ(\Omega\degppp)\circ\theta_{k_1},\langle(\Omega i)\circ(\Omega\degppp)\circ\theta_{k_2},\ldots,\langle(\Omega i)\circ(\Omega\degppp)\circ\theta_{k_{t-1}},(\Omega i)\circ(\Omega\degppp)\circ\theta_{k_t}\rangle\ldots\rangle\rangle.
\end{equation*}
As this is a Samelson product of length three or more it is null homotopic since $\nil(SU(p+t-1))=2$.

Next let $\Omega\Sigma(A_k\wedge A_j)$ be a factor in $\Omega Q$ involving two smash factors..
Then by Theorem \ref{a}, $\Omega\nu$ restricted to this factor is either a looped Whitehead product or $\eta_{k,j}$.
Therefore $\Omega\nu$ restricted to this factor is determined by the restriction to $(A_k\wedge A_j)$, the Samelson product $\langle \theta_k,\theta_j\rangle$.
Consider $(\Omega\degppp)\circ\langle \theta_k,\theta_j\rangle$.
The naturality of the suspension implies there is a homotopy commutative diagram
\begin{equation}\label{diag2}
	\xymatrix{ A_k \ar[r]^-{\theta_k} \ar[d]^-{\degppp}& \Omega\Sigma A \ar[d]^-{\Omega\Sigma\degppp} \\
    	       A_k \ar[r]^-{\theta_k}                 & \Omega\Sigma A.                        }
\end{equation}
This means we get the following string of homotopies, 
\begin{equation*}
	(\Omega\Sigma\degppp)\circ\langle\theta_k,\theta_j\rangle\simeq\langle(\Omega\Sigma\degppp)\circ\theta_k,(\Omega\Sigma\degppp)\circ\theta_j\rangle\simeq\langle\theta_k\circ\degppp,\theta_j\circ\degppp\rangle\simeq \langle\theta_k,\theta_j\rangle\circ\underline{p}^6.
\end{equation*}
As $\langle \theta_k,\theta_j\rangle$ has order at most $p^3$ by Propositions \ref{sama} and \ref{samb} and Lemma \ref{samb2}, we see that this string of homotopies means that $(\Omega\degppp)\circ\langle\theta_k,\theta_j\rangle$ is trivial. 
Note that as $\Omega\nu$ restricted to $\Omega R_k$ factors through a looped Whitehead product, we may deal with the factors $\Omega R_k$ in the same manner.

Now consider one of the amended looped Whitehead products.
Again we look at the restriction to $A_k\wedge A_j$ which is $\overline{\eta}_{k,j}=\langle\theta_k,\theta_j\rangle-a_{k,j}$.
Since $\Omega\Sigma\degppp$ is a loop map it distributes on the left.
So we obtain
\begin{equation*}
	(\Omega\degppp)\circ\left(\langle\theta_k,\theta_j\rangle-a_{k,j}\right)\simeq(\Omega\degppp)\circ\langle\theta_k,\theta_j\rangle-(\Omega\Sigma\degppp)\circ a_{k,j}\simeq\langle\theta_k,\theta_j\rangle\circ\underline{p}^6 - (\Omega\degppp)\circ a_{k,j}.
\end{equation*}
By Lemmas \ref{sama} and \ref{samfacb} the map $a_{k,j}$ compresses to $S^{2(k+j-p+t)-1}\subset \Omega\Sigma A$.
This means there is a homotopy commutative diagram
\begin{equation*}
	\xymatrix{ A_k\wedge A_j \ar[r]^-{a'_{k,j}}& S^{2(k+j-p+t)-1} \ar[d]^-{\degppp} \ar[r]^-{E'} &\Omega\Sigma A \ar[d]^-{\Omega\degppp} \\
             \empty                          & S^{2(k+j-p+t)-1} \ar[r]^-{E'}                  &\Omega\Sigma A }
\end{equation*}
where $E'$ is the composite $S^{2(k+j-p+t)-1}\hookrightarrow A\rightarrow \Omega\Sigma A$ and the top row is homotopic to $a_{k,j}$.

Consider the string of homotopies
\begin{equation*}
	(\Omega\degppp)\circ a_{k,j}\simeq (\Omega\degppp)\circ E'\circ a'_{k,j}\simeq E'\circ\degppp\circ a'_{k,j}\simeq p^3\circ(E'\circ a'_{k,j})\simeq p^3\circ a_{k,j}\simeq a_{k,j}\circ\degppp.
\end{equation*}
By Diagram \ref{diag2} we get the homotopy $(\Omega\degppp)\circ E'\circ a'_{k,j}\simeq E'\circ\degppp\circ a'_{k,j}$.
Since $2(k+j-p+t)-1\geq 3$, $S^{2(k+j-p+t)-1}$ is a suspension and so the two products in $[S^{2(k+j-p+t)-1},\Omega\Sigma A]$ agree, implying that we get the second homotopy $E'\circ\degppp\circ a'_{k,j}\simeq p^3\circ(E'\circ a'_{k,j})=p^3\circ a_{k,j}$.
Now recall by Proposition \ref{sama} that the amended Whitehead products come from factors $A_k\wedge A_j$ where $k+j\geq p+2$. 
Therefore at least one of $A_k$ or $A_j$ is a suspension, hence so is $A_k\wedge A_j$. 
Thus the two products in $[A_k\wedge A_j,\Omega\Sigma A]$ agree. 
We therefore finally have a homotopy $p^3\circ a_{k,j}\simeq a_{k,j}\circ\degppp$.

Therefore
\begin{equation*}
	(\Omega\degppp)\circ\left(\langle\theta_k,\theta_j\rangle-a_{k,j}\right)\simeq \langle\theta_k,\theta_j\rangle\circ\underline{p}^6 - a_{k,j}\circ\degppp.
\end{equation*}
Since the composition of $\langle\theta_k,\theta_j\rangle$ and $a_{k,j}$ with $\Omega i$ has order at most $p^3$ by Lemmas \ref{sama} and \ref{samb} we see that $(\Omega i)\circ(\Omega\degppp)\circ(\langle\theta_k,\theta_j\rangle-a_{k,j})$ is null homotopic also.

We have not looked at the case where the factor is $\Omega R_k$ yet. However, as Theorem \ref{CN} tells us that these factor through the loops on Whitehead products, they are dealt with by the arguments above.
This concludes the proof.
\end{proof}

To prove Theorem \ref{b} it remains to show that the map $g$ in Lemma \ref{z} is homotopic to the $p^3$ power map.
To show this we will consider the difference $\Omega\degppp- p^3$ on $\Omega\Sigma A$.
Barratt \cite{MR0120647} examined this difference in some detail.

Consider the composite 
\begin{equation*}
	\Sigma X \xrightarrow{\sigma} \bigvee_{j=1}^{k}\Sigma X \xrightarrow{\nabla} \Sigma X,
\end{equation*}
where $\sigma$ is the $(k-1)$-fold diagonal map and $\nabla$ is the $(k-1)$-fold folding map.
After looping and applying the Hilton-Milnor Theorem to the space $\Omega\left(\bigvee_{j=1}^{k} \left(\Sigma X\right)\right)$ for some path-connected space $X$ and then folding we obtain the formula
\begin{equation*}
	\Omega \underline{k}\simeq k + \sum_{j=2}^{\infty}n_j(\Omega\omega_j)\circ H_{j}
\end{equation*}  
where $\omega_j$ is a length $j$ Whitehead product of the identity map on $\Sigma X$ with itself, $n_j$ is some integer, and $H_{j}$ is the Hilton-Hopf invariant.
This is known as the Distributive Law and is due to Barratt \cite{MR0120647}.
Barratt's construction is in terms of homotopy groups but is easily rephrased in terms of spaces.
If we take $k=p$ for a prime $p$, Barratt showed that the integers $n_j$ are divisible by $p$ if $j<p$.
In our case we require $\eta_2$ and $\eta_3$ to be divisible by $p^3$.

If we examine the detailed construction of the Hilton-Milnor theorem, for example in Whitehead's book \cite{MR516508}, it is seen that the number of length $j$ Whitehead products in the Hilton-Milnor equivalence is divisible by the number of basic products of length $j$ on $k$ elements.
The number of these basic products can be found in Whitehead's book \cite{MR516508}.
\begin{lem}
	The number of basic products of length $j$ on $k$ elements is given by
	\begin{equation*}
		\frac{1}{j}\sum_{d|j}\mu(d)k^{j/d} 
	\end{equation*}
	where $\mu$ is the M\"{o}bius inversion function, which is defined as $\mu(1)=1$, $\mu(d)=(-1)^l$ if $d=p_1p_2...p_l$ for unique primes $p_k$, and $\mu(d)=0$ otherwise.
	\qed
\end{lem}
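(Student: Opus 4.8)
The plan is to identify the number of basic products of length $j$ on $k$ elements with the dimension of a graded piece of a free Lie algebra and then extract the stated closed form by M\"obius inversion; this is precisely Witt's formula. Recall that the \emph{basic products} of weight $j$ constitute a basis for the degree-$j$ homogeneous component $L_j$ of the free Lie algebra $L=L\langle u_2,\ldots,u_p\rangle$ appearing in the Hilton-Milnor decomposition (Theorem \ref{HM}). Thus the quantity we wish to count is exactly $\ell_j:=\dim_{\Q} L_j$, and the first step is simply to replace the combinatorial count by this dimension, citing Whitehead's book \cite{MR516508} for the fact that basic products furnish a basis.

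Next I would bring in the Poincar\'{e}--Birkhoff--Witt theorem. Since $L$ is free on a $k$-dimensional space $V$, its universal enveloping algebra $U(L)$ is the free associative tensor algebra $T(V)$, whose degree-$n$ part has dimension $k^n$; hence its Poincar\'{e} series is $(1-kt)^{-1}$. On the other hand, PBW gives a basis of $U(L)$ consisting of ordered monomials in any homogeneous basis of $L$, and counting these by total degree yields the generating-function identity
\begin{equation*}
	\prod_{j=1}^{\infty}\left(\frac{1}{1-t^{j}}\right)^{\ell_j}=\frac{1}{1-kt}.
\end{equation*}

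The final step is to take logarithms of both sides and compare coefficients. Expanding $-\log(1-t^{j})=\sum_{m\geq 1}t^{jm}/m$ and $-\log(1-kt)=\sum_{n\geq 1}k^{n}t^{n}/n$, the coefficient of $t^{n}$ on either side gives the divisor identity
\begin{equation*}
	\sum_{d\mid n}d\,\ell_d=k^{n}.
\end{equation*}
Standard M\"obius inversion, in the form $g(n)=\sum_{d\mid n}f(d)\Rightarrow f(n)=\sum_{d\mid n}\mu(n/d)g(d)$ applied with $f(d)=d\,\ell_d$ and $g(n)=k^{n}$, then produces $j\,\ell_j=\sum_{d\mid j}\mu(j/d)k^{d}=\sum_{d\mid j}\mu(d)k^{j/d}$, which after dividing by $j$ is exactly the claimed formula.

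The M\"obius inversion and the coefficient comparison are entirely routine, so I expect the only genuine content to lie in justifying the generating-function identity, namely that basic products give a basis of $L$ and that PBW converts an ordered such basis into a basis of $T(V)$. Because the paper already treats basic products as the building blocks of the Hilton-Milnor equivalence, this identification is natural in the present setting, and I would lean on Whitehead's book for the structural facts rather than reprove them here.
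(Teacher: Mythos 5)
Your proof is correct. Note, however, that the paper itself offers no proof of this lemma at all: it is stated with a \qed and the count is simply quoted from Whitehead's book \cite{MR516508}, so there is no argument in the paper to compare against step by step. What you have written is the classical derivation of Witt's formula, and it is exactly the argument that underlies the cited result: basic (Hall) products of weight $j$ form a basis of the degree-$j$ component $L_j$ of the free Lie algebra on $k$ generators; Poincar\'{e}--Birkhoff--Witt identifies $U(L)$ with the tensor algebra $T(V)$, giving the product formula $\prod_{j\geq 1}(1-t^{j})^{-\ell_j}=(1-kt)^{-1}$; taking logarithms yields $\sum_{d\mid n}d\,\ell_d=k^{n}$; and M\"obius inversion produces the stated closed form. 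All three steps check out (in particular your reindexing $\sum_{d\mid j}\mu(j/d)k^{d}=\sum_{d\mid j}\mu(d)k^{j/d}$ is the standard substitution $d\mapsto j/d$). The one structural input you do not prove --- that basic products give a basis of $L_j$ --- is precisely the fact the paper leans on Whitehead for, so deferring it to the same reference is appropriate. Your version buys a self-contained proof in place of a citation, at the cost of invoking PBW; for the paper's purposes (only the divisibility of the count by $p^m$ when $j<p$ and $k=p^m$ is used downstream) either route suffices.
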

Notice that if we take $k=p^m$ for some integer $m$, then this lemma implies that the number of length $j$ basic products is divisible by $p^m$ if $j<p$.
So by taking each of the $k$ spaces in the Hilton-Milnor theorem to be the same space $X$ then we see that the number of occurrences of the Whitehead product $\omega_j$ is divisible by $p^m$ if $j<p$.
Hence we can state the following version of the Distributive Law.
\begin{lem}\label{dist}
	Let $\omega_j$ be a length $j$ Whitehead product.
	Then 
	\begin{equation*}
		\Omega\degpm \simeq p^m + \sum_{j=2}^{\infty}n_j\left((\Omega\omega_j)\circ H_{j}\right)
	\end{equation*}
	were $n_j$ is divisible by $p^m$ if $j< p$.
	\qed
\end{lem}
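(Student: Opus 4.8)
The plan is to obtain this as the specialization $k=p^m$ of Barratt's general Distributive Law, reducing the whole statement to a divisibility count of basic products. First I would apply the Distributive Law recorded above with $k=p^m$. Since all $p^m$ wedge summands in $\Omega\bigl(\bigvee_{i=1}^{p^m}\Sigma X\bigr)$ are copies of the same space, every length-$j$ Whitehead product produced by the Hilton-Milnor decomposition folds onto the single product $\omega_j$ of the identity of $\Sigma X$ with itself, and the coefficient $n_j$ simply records how many times $\omega_j$ so occurs. This yields the expansion
\begin{equation*}
	\Omega\degpm \simeq p^m + \sum_{j=2}^{\infty}n_j\left((\Omega\omega_j)\circ H_{j}\right),
\end{equation*}
so the content of the lemma reduces to showing $p^m\mid n_j$ whenever $j<p$.

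To establish this I would invoke the observation, drawn from the explicit construction of the Hilton-Milnor equivalence in Whitehead's book \cite{MR516508}, that each coefficient $n_j$ is divisible by the number $N_j$ of basic products of length $j$ on $k=p^m$ letters. It therefore suffices to prove $p^m\mid N_j$ for $j<p$. By the M\"{o}bius formula recorded above,
\begin{equation*}
	N_j = \frac{1}{j}\sum_{d\mid j}\mu(d)\,(p^m)^{j/d} = \frac{1}{j}\sum_{d\mid j}\mu(d)\,p^{mj/d}.
\end{equation*}
For every divisor $d$ of $j$ one has $j/d\geq 1$, hence $mj/d\geq m$, so each summand $\mu(d)\,p^{mj/d}$ is divisible by $p^m$; consequently the integer $jN_j=\sum_{d\mid j}\mu(d)\,p^{mj/d}$ is divisible by $p^m$. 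Since $j<p$ and $p$ is prime we have $\gcd(j,p)=1$, so dividing by $j$ does not lower the $p$-adic valuation. Thus $p^m\mid N_j$, and hence $p^m\mid n_j$, which is exactly the required divisibility.

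I do not expect a genuine obstacle here, since the geometric input is already supplied by Barratt's law and the combinatorial count of basic products; the remaining work is organizational. The one point that needs care is the identification of the coefficients $n_j$ in the looped diagonal-folding expansion with multiples of the basic-product count $N_j$, which is the role of the detailed bookkeeping in the Hilton-Milnor construction and is precisely what legitimizes passing from $p^m\mid N_j$ to $p^m\mid n_j$. Beyond that, the heart of the proof is the elementary valuation estimate, whose only essential feature is that the hypothesis $j<p$ forces $j$ to be a unit modulo $p$, so that the division by $j$ in the M\"{o}bius formula cannot destroy the factor of $p^m$ carried in the numerator.
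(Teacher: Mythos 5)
Your proposal follows the paper's own route exactly: it specializes Barratt's distributive law to $k=p^m$, reduces the divisibility of $n_j$ to that of the count $N_j$ of basic products via the detailed Hilton--Milnor construction, and then applies the M\"{o}bius formula for $N_j$. The only difference is that you spell out the elementary valuation step (each term $\mu(d)p^{mj/d}$ is divisible by $p^m$ and $\gcd(j,p)=1$ for $j<p$, so dividing by $j$ preserves the factor $p^m$) which the paper leaves implicit; this is correct and fills in the argument cleanly.
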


\begin{prop}
	The composite $\Omega\Sigma A\xrightarrow{\Omega\degppp-p^3}\Omega\Sigma A\xrightarrow{\Omega i} SU(p+t-1)$ is null homotopic if $2\leq t<p$.
\end{prop}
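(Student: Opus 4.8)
The plan is to feed the Distributive Law (Lemma \ref{dist}) into the fact that $\Omega i$ is an H-map, and then to dispose of the resulting terms according to their Whitehead-product length. Taking $m=3$ in Lemma \ref{dist} gives
\[
	\Omega\degppp \simeq p^3 + \sum_{j\geq 2} n_j\,(\Omega\omega_j)\circ H_j,
\]
where $\omega_j$ is a length $j$ Whitehead product of the identity of $\Sigma A$ with itself, $H_j$ is the corresponding Hilton-Hopf invariant, and $n_j$ is divisible by $p^3$ whenever $j<p$. Since $\Omega i$ is an H-map it distributes over this sum in the group $[\Omega\Sigma A,SU(p+t-1)]$, so that $(\Omega i)\circ(\Omega\degppp-p^3)\simeq\sum_{j\geq 2} n_j\big((\Omega i)\circ(\Omega\omega_j)\circ H_j\big)$, and it suffices to kill each summand separately.

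The summands split into two regimes. For $j\geq 4$ the factor $(\Omega i)\circ(\Omega\omega_j)$ is already null homotopic by Lemma \ref{nulllength4}, since $\nil(SU(p+t-1))\leq 3$ by Theorem \ref{kish}; these terms vanish regardless of $n_j$ and $H_j$. For $j=2$ and $j=3$ the hypothesis $p>5$ forces $2,3<p$, so $n_2$ and $n_3$ are both divisible by $p^3$, and it is enough to show that $(\Omega i)\circ(\Omega\omega_j)\circ H_j$ has order dividing $p^3$. By James' Theorem \ref{James} the composite $\Phi_j:=(\Omega i)\circ(\Omega\omega_j)$ is the unique H-map extending its restriction along $E$, and under the splitting $A\wedge\cdots\wedge A=\bigvee A_{k_1}\wedge\cdots\wedge A_{k_j}$ that restriction is a wedge of iterated Samelson products $\langle i_{k_1},\langle\ldots,i_{k_j}\rangle\ldots\rangle$. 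These have order at most $p^3$ for $j=2$ (Propositions \ref{sama} and \ref{samb}, Lemma \ref{samb2}) and at most $p^2$ for $j=3$ (using $t\neq p$), so the restriction has order dividing $p^3$. Moreover $n_j\big(\Phi_j\circ H_j\big)\simeq(n_j\Phi_j)\circ H_j$ by naturality of the diagonal against $H_j$, so once the order of $\Phi_j$ is pinned down the divisibility $p^3\mid n_j$ finishes the term.

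The step I expect to be the genuine obstacle is precisely the transfer of the order bound from the restriction $\Phi_j\circ E$ to the H-map $\Phi_j$ itself. One cannot simply invoke ``an H-map whose restriction has order $p^3$ has order $p^3$'', since $p^3\cdot\Phi_j$ need not be an H-map: that would require $SU(p+t-1)$ to be homotopy commutative, which fails for $t\geq 2$. Instead I would use the sphere factorizations of Lemmas \ref{samfacb} and \ref{sam3}: each relevant Samelson product compresses through an inclusion $S^{2m-1}\hookrightarrow SU(p+t-1)$, so James' theorem factors $\Phi_j$ as $B\circ\Omega\Sigma(\mathrm{comp})$ where $B$ is the H-map extending a class $\beta\in\pi_{2m-1}(SU(p+t-1))$ of order dividing $p^3$, and it suffices to prove $p^3\cdot B\simeq\ast$. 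For this I would run the Distributive Law on the sphere: as $B$ is an H-map, $B\circ\Omega\degppp\simeq p^3\cdot B+\sum_l n_l\,B\circ(\Omega\omega_l)\circ H_l$, while the restriction of $B\circ\Omega\degppp$ is $p^3\cdot\beta\simeq\ast$, whence $B\circ\Omega\degppp\simeq\ast$ by James. This exhibits $p^3\cdot B$ as a sum of Samelson products of $\beta$ with itself; those of length $\geq 4$ vanish by the nilpotency bound of Theorem \ref{kish}, and those of lengths $2,3$ again compress through spheres, so the same device applies to them. The recursion terminates because each iteration raises the Samelson-product length, and length $\geq 4$ forces vanishing; organising this downward induction cleanly is where the real work lies, whereas the distribution of $\Omega i$ across the Distributive Law in the first step is routine.
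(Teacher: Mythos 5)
Your opening move is exactly the paper's: feed Lemma \ref{dist} into the H-map $\Omega i$, kill the length $\geq 4$ terms by Theorem \ref{kish}, and reduce the $j=2,3$ terms to showing that $p^3\cdot\Phi_j\simeq\ast$ for $\Phi_j=(\Omega i)\circ(\Omega\omega_j)$, using $p^3\mid n_j$. The paper disposes of this last point by commuting $p^3$ past the H-maps so that the restriction along $E$ becomes $(\Phi_j\circ E)\circ\degppp\simeq p^3\cdot(\text{Samelson product})\simeq\ast$, and then invoking Theorem \ref{James}; you are right to flag that invocation as the delicate step, since $p^3\cdot\Phi_j$ is not a priori an H-map and uniqueness of H-map extensions cannot be applied to it directly.

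Your proposed repair, however, does not work. The compressions of Proposition \ref{sama}, Lemma \ref{samfacb} and Corollary \ref{sam3} factor the Samelson product as $A_{k_1}\wedge\cdots\wedge A_{k_j}\rightarrow S^{2m-1}\hookrightarrow SU(p+t-1)$, where the second map is the \emph{inclusion of a low-dimensional sphere} (a product factor, or the bottom cell of some $B_m$). That class $\beta\in\pi_{2m-1}(SU(p+t-1))$ generates a $\mathbb{Z}_{(p)}$ summand and certainly does not satisfy $p^3\beta\simeq\ast$; the torsion of the Samelson product is carried entirely by the map into the sphere, not by $\beta$. So the pivotal step ``the restriction of $B\circ\Omega\degppp$ is $p^3\cdot\beta\simeq\ast$, whence $B\circ\Omega\degppp\simeq\ast$'' is false, and with it the identity exhibiting $p^3\cdot B$ as a sum of correction terms. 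The termination claim also fails: $\langle\beta,\beta\rangle$ for $\beta$ a factor inclusion is just $\langle i_m,i_m\rangle$ again, a length two Samelson product of the original generators, so compressing to a sphere loses, rather than raises, the bracket length. The natural repair stays on $\Omega\Sigma(A^{(j)})$ rather than passing to a sphere: $\Phi_j\circ\Omega\Sigma\degppp$ \emph{is} an H-map (a composite of H-maps), its restriction is $(\Phi_j\circ E)\circ\degppp\simeq\ast$, so it is null by Theorem \ref{James}; expanding $\Omega\Sigma\degppp$ by Lemma \ref{dist} then gives $p^3\cdot\Phi_j\simeq-\sum_l n_l\,\Phi_j\circ(\Omega\omega'_l)\circ H_l$, and each correction term restricts to a length $l$ Samelson product of $\Phi_j\circ E$ with itself, hence to an iterated Samelson product of total weight $lj\geq 4$ in $SU(p+t-1)$, which vanishes by Theorem \ref{kish}. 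This closes the argument in one step, with no recursion.
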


\begin{proof}
For $2\leq j\leq p-1$, let $\overline{n}_j=\frac{n_j}{p^3}$. 
By Lemma \ref{dist}, $\overline{\eta}_{j}\in \Z$.
Also we have
\begin{align*}
	(\Omega i)\circ(\Omega\degpp-p^3)&\simeq (\Omega i)\circ\left(\sum_{j=2}^{p-1}\left(p^3\overline{n}_j\circ(\Omega\omega_j)\circ H_{j}\right) + \sum_{j=p}^{\infty}n_j\left((\Omega\omega_j)\circ H_{j}\right)\right)\\
								  &\simeq \sum_{j=1}^{p-1}\left((\Omega i)\circ p^2\overline{n}_j\circ(\Omega\omega_j)\circ H_{j}\right) + \sum_{j=p}^{\infty}\left((\Omega i)\circ n_j(\Omega\omega_j)\circ H_{j}\right)
\end{align*}
For $j>3$ the composite $(\Omega i)\circ(\Omega\omega_j)$ is determined by its restriction $(\Omega i)\circ(\Omega\omega_j)\circ E$ by Theorem \ref{James}.
This restriction is a length $j$ Samelson product in $SU(p+t-1)$ and is therefore trivial by Propositions \ref{sama} and \ref{samb} and Lemma \ref{samb2}.

When $j\leq 3$, then $j<p$ as $p>5$.
The composite $(\Omega i)\circ p^3\circ(\Omega\omega_j)$ is again determined by its restriction $(\Omega i)\circ p^3\circ(\Omega\omega_j)\circ E$.
As the $p^{\textrm{th}}$ power map commutes with H-maps, we get that $(\Omega i)\circ p^3\circ(\Omega\omega_j)\circ E\simeq (\Omega i)\circ(\Omega\omega_j)\circ p^3\circ E$.
This however is homotopic to
\begin{equation*}
	(\Omega i)\circ(\Omega\omega_j)\circ p^3\circ E\simeq (\Omega i)\circ(\Omega\omega_j)\circ E \circ \degpp
\end{equation*}
Since the composite $(\Omega i)\circ(\Omega\omega_j)\circ E$ is a Samelson product in $SU(p+t-1)$ of length less than four, we known that it is of order at most $p^3$ by Propositions \ref{sama} and \ref{samb} and Lemma \ref{samb2}.
Therefore $(\Omega i)\circ p^3\circ(\Omega\omega_j)\circ E$ is trivial.
Hence $(\Omega i)\circ(\Omega\degpp-p^3)$ is trivial as required.
\end{proof}

\begin{cor}\label{last}
	The composite $\Omega\Sigma A\xrightarrow{p^3} \Omega\Sigma A\xrightarrow{\Omega i} SU(p+t-1)$ is homotopic to the composite $\Omega\Sigma A\xrightarrow{\Omega\degppp} \Omega\Sigma A\xrightarrow{\Omega i} SU(p+t-1)$.
\qed
\end{cor}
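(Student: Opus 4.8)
The plan is to deduce this corollary directly from the preceding proposition, exploiting the fact, established at the start of Section 3, that $\Omega i$ is an H-map. The essential observation is that the difference $\Omega\degppp - p^3$ lives in the group $[\Omega\Sigma A,\Omega\Sigma A]$ formed using the loop multiplication on the target, and that post-composition with an H-map respects this group structure.

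First I would recall that both $\Omega\Sigma A$ and $SU(p+t-1)$ are H-groups: the former as a loop space, and the latter since it is a Lie group, hence a topological group possessing a genuine inverse. Consequently the sets $[\Omega\Sigma A,\Omega\Sigma A]$ and $[\Omega\Sigma A, SU(p+t-1)]$ are groups under the multiplications induced by their respective targets. Because $\Omega i$ is an H-map between H-groups it preserves both the multiplication and the homotopy inverse up to homotopy, so post-composition
\begin{equation*}
	(\Omega i)_{\ast}:[\Omega\Sigma A,\Omega\Sigma A]\longrightarrow [\Omega\Sigma A, SU(p+t-1)]
\end{equation*}
is a group homomorphism.

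Next I would apply this homomorphism to the identity furnished by the previous proposition. Since $(\Omega i)\circ(\Omega\degppp - p^3)$ is null homotopic and $(\Omega i)_{\ast}$ carries a difference to a difference, we obtain
\begin{equation*}
	\ast\simeq (\Omega i)\circ(\Omega\degppp - p^3)\simeq \left((\Omega i)\circ\Omega\degppp\right)-\left((\Omega i)\circ p^3\right)
\end{equation*}
in the group $[\Omega\Sigma A, SU(p+t-1)]$. Adding $(\Omega i)\circ p^3$ to both sides then yields $(\Omega i)\circ\Omega\degppp\simeq (\Omega i)\circ p^3$, which is precisely the asserted homotopy between the two composites.

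The argument is almost entirely formal, so there is no genuine obstacle; the only point requiring a moment's care is the confirmation that the minus sign in the statement of the preceding proposition refers to the difference taken with respect to the loop H-structure on $\Omega\Sigma A$, for it is exactly this structure with which the H-map $\Omega i$ is compatible. Once that is understood, distributing $\Omega i$ across the difference is immediate from the defining property of an H-map, and the corollary follows.
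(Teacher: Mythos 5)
Your argument is correct and is exactly the deduction the paper intends: the corollary carries no written proof precisely because it follows formally from the preceding proposition by distributing the H-map $\Omega i$ across the difference $\Omega\degppp - p^3$, using that post-composition with an H-map of H-groups induces a group homomorphism $[\Omega\Sigma A,\Omega\Sigma A]\rightarrow[\Omega\Sigma A,SU(p+t-1)]$. Your care about which H-structure defines the difference is the right point to check, and it is satisfied here since $\Omega i$ is a loop map.
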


With all of this in mind we can finally prove our main theorem.

\begin{proof}[Proof of Theorem 2.2]
Recall that by Lemma \ref{decomp} there is a right homotopy inverse to the map $\Omega i$ which we will call $r$.
Therefore by Lemma \ref{dia} we get a homotopy commutative diagram
\[\xymatrix{ SU(p+t-1) \ar[r]^-{r}& \Omega\Sigma A \ar[d]^-{\Omega\degppp} \ar[r]^-{\Omega i} & SU(p+t-1) \ar[d]^-{g} \\
             \empty               & \Omega\Sigma A \ar[r]^-{\Omega i}                       & SU(p+t-1)            }\]
where $g$ is an H-map.
Chasing around the diagram we see that 
\begin{equation*}
(\Omega i)\circ(\Omega\degppp)\circ r \simeq g\circ(\Omega i)\circ r\simeq g.
\end{equation*}
From Corollary \ref{last} we see that $(\Omega i)\circ(\Omega\degppp)\circ r\simeq (\Omega i)\circ p^3 \circ r$.
Since power maps commute with H-maps we get the following string of homotopies.
\begin{equation*}
(\Omega i)\circ p^3\circ r\simeq p^3\circ(\Omega i)\circ r\simeq p^3
\end{equation*}
Therefore
\begin{equation*}
p^3\simeq (\Omega i)\circ(\Omega\degppp)\circ r \simeq g\circ(\Omega i)\circ r \simeq g.
\end{equation*}
As $g$ is an H-map we conclude that the $p^3$ power map on $SU(p+t-1)$ is an H-map.
\end{proof}

\bibliographystyle{acm}	
\bibliography{Bibfile}

\end{document}